\newtheorem{theorem}{Theorem}[section]
\newtheorem{lemma}{Lemma}[section]
\newtheorem{remark}{Remark}[section]
\newtheorem{proposition}{Proposition}[section]
\begin{document}
\title{Numerical Analysis of Finite Dimensional Approximations of Kohn-Sham Models
\thanks{{This work was partially
supported by the National Science Foundation of China under grants
10871198 and 10971059, the Funds for Creative Research Groups of
China under grant 11021101, and the National Basic Research Program
of China under grant 2011CB309703.}}}
\author{Huajie Chen\thanks{LSEC, Institute of Computational Mathematics
and Scientific/Engineering Computing, Academy of Mathematics and
Systems Science, Chinese Academy of Sciences, Beijing 100190, China
({hjchen@lsec.cc.ac.cn}).} \and Xingao Gong\thanks{ Department of
Physics, Fudan University, Shanghai 200433, China
(xggong@fudan.edu.cn).} \and  Lianhua He\thanks{LSEC, Institute of
Computational Mathematics and Scientific/Engineering Computing,
Academy of Mathematics and Systems Science, Chinese Academy of
Sciences, Beijing 100190, China ({helh@lsec.cc.ac.cn}).} \and Zhang
Yang\thanks{LSEC, Institute of Computational Mathematics and
Scientific/Engineering Computing, Academy of Mathematics and Systems
Science, Chinese Academy of Sciences, Beijing 100190, China
({zyang@lsec.cc.ac.cn}).} \and Aihui Zhou\thanks{LSEC, Institute of
Computational Mathematics and Scientific/Engineering Computing,
Academy of Mathematics and Systems Science, Chinese Academy of
Sciences, Beijing 100190, China ({azhou@lsec.cc.ac.cn}).}}
\date{}
\maketitle

\begin{abstract}
In this paper, we study finite dimensional approximations of
Kohn-Sham models, which are widely used in electronic structure
calculations. We prove the convergence of the finite dimensional
approximations and derive the a priori error estimates for ground
state energies and solutions. We also provide numerical simulations
for several molecular systems that support our theory.
\end{abstract}\vskip 0.2cm

{\bf Keywords:}\quad convergence, density functional theory, error
estimate, Kohn-Sham equation, nonlinear eigenvalue problem. \vskip
0.2cm

{\bf AMS subject classifications:}\quad 35Q55, 65N15, 65N25, 65N30,
81Q05.

\section{Introduction}\label{sec-introduction}\setcounter{equation}{0}
Density functional theory (DFT) is a theory of many-body systems and has become a primary tool for electronic structure calculations in atoms, molecules,
and condensed matter \cite{hohenberg-kohn64,kohn-sham65,bris03,martin04,parr-yang94,saad10}. The most widely used is the Kohn-Sham  model, in which a
many-body problem of interacting electrons in a static external potential is reduced to a tractable problem of non-interacting electrons moving in an
effective potential.  The purpose of this paper is to analyze the finite dimensional approximations of Kohn-Sham models so as to provide a mathematical
justification for both the directly minimizing energy functional method \cite{payne92,schneider09} and the variational optimization method (i.e. solving
the Kohn-Sham equation self-consistently) \cite{martin04} and some understanding of several existing approximate methods in modern electronic  structure
calculations.

Throughout this paper, we restrict our mathematical analysis and numerical simulations to non-relativistic, spin-unpolarized models.
In the pseudopotential setting, the ground state solutions of the
Kohn-Sham model for a molecular system can be obtained by minimizing
the Kohn-Sham energy functional
\begin{eqnarray}\label{eq-energy-intro}\nonumber
E(\{\phi_i\})&=&\frac{1}{2}\sum_{i=1}^{N} \int_{\mathbb{R}^3}|\nabla\phi_i(x)|^2dx + \int_{\mathbb{R}^3}V_{loc}(x)\rho(x)dx +
\sum_{i=1}^N\int_{\mathbb{R}^3}\phi_i(x)V_{nl}\phi_i(x)dx \\
&& + \frac{1}{2}\int_{\mathbb{R}^3}\int_{\mathbb{R}^3}
\frac{\rho(x)\rho(y)}{|x-y|}dxdy + \int_{\mathbb{R}^3}
\mathcal{E}(\rho(x))dx
\end{eqnarray}
with respect to wavefunctions $\{\phi_i\}_{i=1}^{N}$ under the
orthogonality constraints
\begin{eqnarray*}
\int_{\mathbb{R}^3}\phi_i\phi_j=\delta_{ij}, \quad 1\leq i,j\leq N,
\end{eqnarray*}
where $N$ is the number of valence electrons in the system, $\rho=\sum_{i=1}^N|\phi_i(x)|^2$ is the electron density, $V_{loc}$ and $V_{nl}$ are the
local and nonlocal pseudopotential operators respectively, that treat the core electrons and the nuclei as a unit and represent the interactions on the
valence electrons \cite{martin04}, and $\mathcal{E}(\rho)$ denotes the exchange-correlation energy per unit volume in an electron gas with density
$\rho$.
The Euler-Lagrange equation corresponding to this minimization
problem is the so-called Kohn-Sham equation: find
$\lambda_i\in\mathbb{R}, ~\phi_i
\in H^1(\mathbb{R}^3)~(i=1,2,\cdots,N)$ such that
\begin{eqnarray}\label{problem-eigen-intro}
\left\{ \begin{array}{rcl} \left(-\frac{1}{2}\Delta +
V_{eff}(\{\phi_i\})\right)\phi_i &=& \lambda_i\phi_i\quad
\mbox{in}~\mathbb{R}^3,
\quad i=1,2,\cdots, N,\\[1ex] \displaystyle
\int_{\mathbb{R}^3}\phi_i\phi_j &=& \delta_{ij},
\end{array} \right.
\end{eqnarray}
where $V_{eff}(\{\phi_i\})$ is the effective potential relative to the last four terms in energy functional
\eqref{eq-energy-intro}. This is  a nonlinear integro-differential
eigenvalue problem, and \eqref{problem-eigen-intro} is often called
self-consistent field (SCF) equation as to emphasize the nonlinear
feature encoded in $V_{eff}(\{\phi_i\})$.
It is assumed in most of the simulations that the ground state solutions can be found by occupying the lowest eigenstates of Kohn-Sham equation
\eqref{problem-eigen-intro}. It is not known whether the assumption is true, but it seems to be most often the case in practice.

The main difficulties of numerical analysis for Kohn-Sham
models lie in what we have to either handle the global minimization
problems whose energy functionals may be nonconvex or  deal with
 the nonlinear eigenvalue problems whose eigenvalues may not be nondegenerate.
To our best knowledge, except for the very recent works of
Canc\`{e}s, Chakir, and Maday \cite{cancesM10} and Suryanarayana et
al \cite{sur10}, there is no any other numerical analysis for
Kohn-Sham models in the literature. We see that the numerical analysis
of Kohn-Sham models is crucial to understand the efficiency of the
numerical methods widely used in electronic structure calculations.
Under a coercivity assumption of the so-called  second order
optimality condition, \cite{cancesM10} provided numerical analysis
of plane wave approximations and showed that every ground state
solution can be approximated by plane wave solutions, and
\cite{sur10}  gave the convergence of ground state energy
approximations based on finite element discretizations only. In this
paper, we shall present a systematic analysis for a general finite
dimensional discretization and prove that all the limit points of
finite dimensional approximations are ground state solutions of the
system, and every ground state solution can be approximated by
finite dimensional solutions if the associated local isomorphism
condition is satisfied. We provide not only convergence of ground
state energy approximations but also convergence rates of both
eigenvalue and eigenfunction approximations. We point out that the
local isomorphism condition should be very mild and is indeed
satisfied if the second order optimality condition is provided.

Besides the Kohn-Sham models, there is another approach in DFT that is not so popular and is called of orbital-free DFT \cite{chen-zhou08,wang-carter00},
in which approximate functionals in terms of electron density alone are used for the kinetic energy of the non-interacting system and only the lowest
eigenvalue needs to be computed. There are several related works on its convergence analysis \cite{chen-gong09,langwallner-ortner09,zhou04,zhou07} and a
priori error estimates \cite{cancesM09,cancesM10,chen-he-zhou10}.

This paper is organized as follows. In the coming section, we give a brief overview of the Kohn-Sham models and some preparations. In Section
\ref{sec-finite-anal}, we derive the existence of a unique local discrete solution under some reasonable assumptions. In Section
\ref{sec-error-estimate}, we  prove the convergence of finite dimensional approximations of the ground state solutions with quite weak assumptions and
derive the error estimates of ground state energy, ground state eigenfunctions and eigenvalues. In Section \ref{sec-numerical}, we present some numerical
results that support our theory. Finally, we give some concluding remarks.

\section{Preliminaries} \label{sec-preliminaries}\setcounter{equation}{0}
Physically, the Kohn-Sham model is set over $\mathbb{R}^3$. But in a lot of
computations, $\mathbb{R}^3$ may be replaced by some polyhedral
bounded domain $\Omega\subset\mathbb{R}^3$, for example, a supercell
for crystal or a large enough cuboid for finite system, which is
reasonable since the  solution of \eqref{problem-eigen-intro}
always decays exponentially \cite{agmon81,hoffmann01,simon00}.
Thus we study numerical analysis of finite dimensional
approximations of Kohn-Sham equation as follows:
\begin{eqnarray}\label{kohn-sham-equation}
\left\{ \begin{array}{rcl} \left(-\frac{1}{2}\Delta +
V_{eff}(\{\phi_i\})\right)\phi_i &=& \lambda_i\phi_i\quad
\mbox{in}~\Omega,
\quad i=1,2,\cdots, N,\\[1ex]
\displaystyle
\int_{\Omega}\phi_i\phi_j &=& \delta_{ij},\quad i,j=1,2,\cdots, N
\end{array} \right.
\end{eqnarray}
with the Dirichlet boundary condition $\phi_i=0$ on $\partial\Omega$ for finite systems and
periodic boundary conditions for crystals,
where $\Omega\subset\mathbb{R}^3$ is a polyhedral bounded domain.

We shall use the standard notation for Sobolev spaces
$W^{s,p}(\Omega)$ and their associated norms and seminorms, see,
e.g., \cite{ciarlet78}. For $p=2$, we denote
$H^s(\Omega)=W^{s,2}(\Omega)$ and $H^1_0(\Omega)=\{v\in H^1(\Omega):
v|_{\partial\Omega}=0\}$, where $v|_{\partial\Omega}=0$ is
understood in the sense of trace, $\|\cdot\|_{s,\Omega}=
\|\cdot\|_{s,2,\Omega}$, and $(\cdot,\cdot)$ is the standard $L^2$
inner product. The space $Y^*$, the dual of the Banach space
$Y$, will also be used. For convenience, the symbol
$\lesssim$ will be used in this paper. The notation $A\lesssim
B$ means that $A\leq C B$ for some constant $C$ that is independent
of the mesh parameters.

Given $c_1\in \mathbb{R}$ and $p, c_2\in [0,\infty)$, we define
\begin{align*}\label{pol-notation}
\mathscr{P}(p,(c_1,c_2))=~\big\{ f:~ \exists ~a_1,a_2\in \mathbb{R}
\mbox{ such that } ~~c_1 t^{p}+a_1 \leq f(t) \leq c_2 t^{p}+a_2
\quad \forall ~t\geq0 \big\}.
\end{align*}
For $\bbkappa\in \mathbb{R}^{N\times N}$, we denote its Frobenius
norm by $|\bbkappa|$.
We consider the functional space\footnote{In fact, our theory also applies to
space $\mathcal{H}=(H^1_{\#}(\Omega))^N$, where  $\Omega$ is the
unit cell of a periodic lattice $\mathcal{R}$ of $\mathbb{R}^d$ and
$H^1_{\#}(\Omega)=\{v|_{\Omega}: v\in H^1_{loc}(\mathbb{R}^d)~
\mbox{and}~ v~ \mbox{is}~ \mathcal{R}-\mbox{periodic}\}$.}
$$
\mathcal{H}\equiv
(H_0^1(\Omega))^N=\{(\phi_1,\phi_2,\cdots,\phi_N):~\phi_i\in
H_0^1(\Omega)~(i=1,2,\cdots,N)\},
$$
which is a Hilbert space associated with the induced norm
$\displaystyle\|\Phi\|_{1,\Omega}=\left(\sum_{i=1}^{N}(\|\phi_i\|_{0,\Omega}^2+\|\nabla\phi_i\|_{0,\Omega}^2)\right)^{1/2}$
and inner product $\displaystyle(\nabla \Phi,\nabla
\Psi)=\sum_{i=1}^N(\nabla \phi_i,\nabla \psi_i)$ for
$\Phi=(\phi_1,\phi_2,\cdots,\phi_N),
\Psi=(\psi_1,\psi_2,\cdots,\psi_N)\in\mathcal{H}$.

For simplicity of notation, we will sometimes abuse the notation by
\begin{eqnarray*}\label{def-norm-abuse}
\|\Phi\|_{m,\omega}=\left(
\sum_{i=1}^{N}\|\phi_i\|_{m,\omega}^2\right)^{1/2},\quad
\|\Phi\|_{0,p,\omega}=\left(\sum_{i=1}^{N}\|\phi_i\|_{0,p,\omega}^p\right)^{1/p}
\end{eqnarray*}
for subdomain $\omega\subset\Omega$ and
$\Phi=(\phi_1,\phi_2,\cdots,\phi_N)\in\mathcal{H}$. For any
$\Phi=(\phi_1,\phi_2,\cdots,\phi_N),
\Psi=(\psi_1,\psi_2,\cdots,\psi_N)\in\mathcal{H}$, we define
$\displaystyle\rho_{\Phi}=\sum_{i=1}^{N}|\phi_i|^2$ and
$$
\Phi^T\Psi=\left( \int_{\Omega}\phi_i\psi_j \right)_{i,j=1}^N
\in\mathbb{R}^{N\times N}.
$$
In our discussion, we shall also use the following three spaces:
$$\mathcal{S}^{N\times N}=\{M\in\mathbb{R}^{N\times N}: M^T=M\},~~
\mathcal{A}^{N\times N}=\{M\in\mathbb{R}^{N\times N}: M^T=-M\},
$$
and
\begin{eqnarray*}\label{eq-class}
\mathbb{Q}=\{\Phi\in\mathcal{H}:\Phi^T\Phi=I^{N\times N}\}.
\end{eqnarray*}
We may decompose  $\mathcal{H}$ as a direct sum of three subspaces \cite{edelman98,maday00}:
\begin{eqnarray*}
\mathcal{H}=\mathcal{S}_\Phi\oplus\mathcal{A}_\Phi\oplus\mathcal{T}_{\Phi}
\end{eqnarray*}
for any $\Phi\in \mathbb{Q}$, where
$\mathcal{S}_{\Phi}=\Phi\mathcal{S}^{N\times N}$,
$\mathcal{A}_{\Phi}=\Phi\mathcal{A}^{N\times N}$, and $
\mathcal{T}_{\Phi}=\left\{\Psi\in\mathcal{H}:\Psi^T\Phi=0\in\mathbb{R}^{N\times
N}\right\}. $

\subsection{Kohn-Sham models}\label{subsec-KSenergy}

In the most commonly setting of local density approximation (LDA)
\cite{martin04}, the associated Kohn-Sham energy functional of
(\ref{kohn-sham-equation}) is expressed as
\begin{eqnarray}\label{eq-energy}
E(\Phi)= \int_{\Omega} \left(\sum_{i=1}^{N}
\frac{1}{2}|\nabla\phi_i|^2 + V_{loc}(x)\rho_{\Phi} +
\sum_{i=1}^N\phi_iV_{nl}\phi_i + \mathcal{E}(\rho_{\Phi})\right)
+\frac{1}{2} D(\rho_{\Phi},\rho_{\Phi})
\end{eqnarray}
for $\Phi=(\phi_1,\phi_2,\cdots,\phi_N)\in\mathcal{H}$, where
$V_{loc}$ is a smooth local pseudopotential, $V_{nl}$ is the
nonlocal pseudopotential operator (see, e.g., \cite{martin04}) given
by
$$
V_{nl}\phi=\sum_{j=1}^M(\phi,\zeta_j)\zeta_j
$$
with $\zeta_j\in L^2(\Omega) (j=1,2,\cdots,M)$,
 $D(\rho_{\Phi},\rho_{\Phi})$ denotes electron-electron coulomb
energy with
$$
D(f,g)=\int_{\Omega}f(g*r^{-1}) =
\int_{\Omega}\int_{\Omega}f(x)g(y)\frac{1}{|x-y|}dxdy,
$$
and $\mathcal{E}(t)$ is some real function over $[0,\infty)$.
We may assume  that $V_{loc}\in L^2(\Omega)$. We see that the
function $\mathcal{E}:[0,\infty)\rightarrow\mathbb{R}$ does not have
a simple analytical expression. In applications, we shall use some
approximations to $\mathcal{E}$, for which  we shall make the
assumption that $\mathcal{E}(t)\in \mathscr{P}(3,(c_1,c_2))$ with
$c_1\geq 0$ or $\mathcal{E}(t)\in \mathscr{P}(4/3,(c_1,c_2))$ that
is satisfied by most of the approximations.

First of all, we have
\begin{proposition}\label{proposition-invariant}
Functional \eqref{eq-energy} is invariant with respect to unitary
transformations, i.e.,
$$
E(\Phi)=E(\Phi U)\qquad \forall~\Phi\in \mathbb{Q}
$$
for any matrix $U=(u_{ij})_{i,j=1}^N\in\mathcal{O}^{N\times N}$,
where $\mathcal{O}^{N\times N}$ is the set of orthogonal matrices.
\end{proposition}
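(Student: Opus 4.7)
The plan is to verify the invariance term by term using the fact that four of the five contributions to $E(\Phi)$ depend on $\Phi$ only through the density $\rho_\Phi$, while the remaining two (kinetic and nonlocal) have the structure of a trace of a Gram-type matrix and are therefore invariant under orthogonal changes of basis. Writing $\Psi=\Phi U$ with $\psi_j=\sum_{i=1}^{N}u_{ij}\phi_i$, the orthogonality relation $U^TU=I$ gives the pointwise identity $\sum_{j=1}^{N}u_{ij}u_{kj}=\delta_{ik}$, which will be invoked repeatedly.

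First I would prove density invariance: expand
\[
\rho_{\Psi}(x)=\sum_{j=1}^{N}\Bigl|\sum_{i=1}^{N}u_{ij}\phi_i(x)\Bigr|^2 =\sum_{i,k=1}^{N}\phi_i(x)\phi_k(x)\sum_{j=1}^{N}u_{ij}u_{kj} =\sum_{i=1}^{N}|\phi_i(x)|^2=\rho_{\Phi}(x).
\]
This single identity immediately handles the local pseudopotential term $\int_{\Omega}V_{loc}\rho_{\Phi}$, the exchange-correlation term $\int_{\Omega}\mathcal{E}(\rho_{\Phi})$, and the Coulomb term $\tfrac{1}{2}D(\rho_{\Phi},\rho_{\Phi})$.

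Next I would handle the kinetic term by the same orthogonality trick applied to $\nabla\psi_j=\sum_i u_{ij}\nabla\phi_i$, obtaining $\sum_j|\nabla\psi_j|^2=\sum_i|\nabla\phi_i|^2$ pointwise. For the nonlocal contribution, using the representation $V_{nl}\phi=\sum_{m=1}^{M}(\phi,\zeta_m)\zeta_m$, one computes
\[
\sum_{j=1}^{N}\int_{\Omega}\psi_j V_{nl}\psi_j =\sum_{j=1}^{N}\sum_{m=1}^{M}(\psi_j,\zeta_m)^2 =\sum_{m=1}^{M}\sum_{i,k=1}^{N}(\phi_i,\zeta_m)(\phi_k,\zeta_m)\sum_{j=1}^{N}u_{ij}u_{kj},
\]
which collapses to $\sum_{i=1}^{N}\int_{\Omega}\phi_i V_{nl}\phi_i$ by the same $\delta_{ik}$ identity.

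There is essentially no substantive obstacle here; the proposition is a clean algebraic consequence of $UU^T=I$. The only point requiring care is bookkeeping of the two-index sums to make sure the quadratic structure is extracted cleanly before the orthogonality is applied; a more conceptual phrasing would observe that the kinetic and nonlocal terms can both be written as $\operatorname{tr}(\Phi^T A\Phi)$ for a self-adjoint operator $A$, and that $\operatorname{tr}((\Phi U)^T A(\Phi U))=\operatorname{tr}(U^T\Phi^T A\Phi U)=\operatorname{tr}(\Phi^T A\Phi)$ by cyclicity of the trace together with $U^TU=I$, which unifies the two computations into a single line.
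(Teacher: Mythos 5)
Your proof is correct and complete; the paper in fact states Proposition \ref{proposition-invariant} without proof, and your term-by-term verification (density invariance handling the local, exchange-correlation and Coulomb terms, plus the Gram/trace argument for the kinetic and nonlocal terms) is exactly the standard argument being taken for granted. The only cosmetic point is that $\sum_{j}u_{ij}u_{kj}=(UU^{T})_{ik}$, so the identity you invoke comes from $UU^{T}=I$ rather than $U^{T}U=I$; for orthogonal $U$ these are of course equivalent.
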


Using similar arguments in \cite{chen-gong09}, we obtain that
$E(\Psi)$ is bounded below over $\mathbb{Q}$. More precisely, we
have
\begin{proposition}\label{proposition-coercive}
There exist constants $C>0$ and $b>0$ such that
\begin{eqnarray}\label{eq-coercive}
E(\Psi)\geq C^{-1} \|\Psi\|^2_{1,\Omega} -b \quad\forall~
\Psi\in\mathbb{Q}.
\end{eqnarray}
\end{proposition}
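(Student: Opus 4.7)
The plan is to bound each of the five terms in $E(\Phi)$ from below, absorbing any negative contributions into the kinetic energy term $\tfrac12\|\nabla\Phi\|_{0,\Omega}^2$ via Young's inequality, and then use the orthonormality constraint $\Phi^T\Phi=I$ (which forces $\|\Phi\|_{0,\Omega}^2=N$) to convert $\|\nabla\Phi\|_{0,\Omega}^2$ into $\|\Phi\|_{1,\Omega}^2$.

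First I would dispose of the easy terms: the nonlocal pseudopotential contribution is $\sum_{i=1}^N\int\phi_iV_{nl}\phi_i=\sum_{i,j}(\phi_i,\zeta_j)^2\ge 0$, and the Hartree term $\tfrac12 D(\rho_\Phi,\rho_\Phi)\ge 0$ since the Coulomb kernel is positive definite. Both can simply be dropped in a lower bound. Next I would control the local pseudopotential term: by Cauchy--Schwarz,
\begin{equation*}
\Bigl|\int_\Omega V_{loc}\rho_\Phi\Bigr|\le \|V_{loc}\|_{0,\Omega}\|\rho_\Phi\|_{0,\Omega},
\end{equation*}
and $\|\rho_\Phi\|_{0,\Omega}^2\le N\sum_i\|\phi_i\|_{0,4,\Omega}^4$. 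The Gagliardo--Nirenberg/Sobolev interpolation in three dimensions gives $\|\phi_i\|_{0,4,\Omega}^4\lesssim \|\phi_i\|_{0,\Omega}\|\phi_i\|_{1,\Omega}^3$, and since $\|\phi_i\|_{0,\Omega}=1$ for $\Phi\in\mathbb{Q}$, we get $|\int V_{loc}\rho_\Phi|\lesssim \|\Phi\|_{1,\Omega}^{3/2}$. A Young-type inequality $t^{3/2}\le \varepsilon t^2+C_\varepsilon$ then shows this term is absorbable.

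For the exchange--correlation term I would split on the two cases of the hypothesis. If $\mathcal{E}\in\mathscr{P}(3,(c_1,c_2))$ with $c_1\ge 0$, then $\int\mathcal{E}(\rho_\Phi)\ge c_1\int\rho_\Phi^3+a_1|\Omega|\ge -|a_1||\Omega|$, so the term is uniformly bounded below. If instead $\mathcal{E}\in\mathscr{P}(4/3,(c_1,c_2))$ and $c_1<0$, then $\int\mathcal{E}(\rho_\Phi)\ge c_1\int\rho_\Phi^{4/3}+a_1|\Omega|$, and by the same interpolation argument (now with exponent $8/3$, giving $\|\phi\|_{0,8/3,\Omega}^{8/3}\lesssim \|\phi\|_{1,\Omega}$ when $\|\phi\|_{0,\Omega}=1$), one obtains $\int\rho_\Phi^{4/3}\lesssim \|\Phi\|_{1,\Omega}$, so the negative contribution is at most linear in $\|\Phi\|_{1,\Omega}$ and again absorbable via Young.

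Assembling the pieces, for any sufficiently small $\varepsilon>0$
\begin{equation*}
E(\Phi)\ge \tfrac12\|\nabla\Phi\|_{0,\Omega}^2-\varepsilon\|\Phi\|_{1,\Omega}^2-C_\varepsilon,
\end{equation*}
and since $\|\Phi\|_{1,\Omega}^2=N+\|\nabla\Phi\|_{0,\Omega}^2$ on $\mathbb{Q}$, choosing $\varepsilon<1/2$ and rearranging yields the desired bound \eqref{eq-coercive}. The only nontrivial step is the interpolation/Young manoeuvre on the $V_{loc}$ and $\mathcal{E}$ terms; the invariance from Proposition \ref{proposition-invariant} plays no role here, but the orthonormality constraint built into $\mathbb{Q}$ is essential to pin down $\|\phi_i\|_{0,\Omega}=1$ inside the Gagliardo--Nirenberg estimates.
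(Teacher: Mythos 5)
Your proof is correct and is precisely the standard coercivity argument that the paper itself does not spell out but delegates to \cite{chen-gong09}: drop the nonnegative nonlocal and Hartree terms, control the $V_{loc}$ and exchange--correlation terms by H\"older, Gagliardo--Nirenberg interpolation (exponents $3/2$ and $1$ in $\|\Phi\|_{1,\Omega}$, both subquadratic) and Young's inequality, and use $\|\Phi\|_{0,\Omega}^2=N$ on $\mathbb{Q}$ to pass from $\|\nabla\Phi\|_{0,\Omega}^2$ to $\|\Phi\|_{1,\Omega}^2$. All the individual estimates check out, so nothing further is needed.
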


To prove the convergence of the numerical approximations, we
 need the lower semi-continuity of the energy functional in the
weak topology of $\mathcal{H}$, whose proof can be referred to
\cite{chen-gong09}.

\begin{proposition}\label{proposition-lower-semi-continuous}
If $\Psi_k$ converge weakly to $\Psi$ in $\mathcal{H}$, then
\begin{eqnarray*}\label{eq-lsc}
E(\Psi) \leq \liminf_{k\rightarrow \infty} E(\Psi_k).
\end{eqnarray*}
\end{proposition}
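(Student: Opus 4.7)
The plan is to reduce the claim to a term-by-term analysis of the five contributions to $E(\Psi)$, distinguishing the (convex) kinetic piece which is genuinely only weakly lower semicontinuous from the four potential/interaction terms which I expect to be continuous under weak $\mathcal{H}$-convergence. First I would fix a sequence $\Psi_k\rightharpoonup\Psi$ in $\mathcal{H}=(H_0^1(\Omega))^N$ and extract a subsequence (which suffices for a $\liminf$ statement) so that, by the Rellich--Kondrachov theorem, $\Psi_k\to\Psi$ strongly in $(L^p(\Omega))^N$ for every $p\in[1,6)$ and pointwise almost everywhere in $\Omega$. Consequently $\rho_{\Psi_k}\to\rho_\Psi$ a.e.\ and strongly in $L^q(\Omega)$ for every $q\in[1,3)$, including the exponents $q=2$, $q=6/5$, and $q=4/3$ that I will need below.

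Next I would dispatch the easy terms. The kinetic term $\tfrac12\sum_i\|\nabla\phi_{i,k}\|_{0,\Omega}^2$ is weakly lower semicontinuous by the classical convexity/continuity argument for the $H^1$ seminorm. For $V_{loc}\in L^2(\Omega)$, the strong $L^2$-convergence $\rho_{\Psi_k}\to\rho_\Psi$ yields $\int_\Omega V_{loc}\rho_{\Psi_k}\to\int_\Omega V_{loc}\rho_\Psi$ by Cauchy--Schwarz. The nonlocal term equals $\sum_{i=1}^N\sum_{j=1}^M(\phi_{i,k},\zeta_j)^2$, and since each $\zeta_j\in L^2(\Omega)$ the strong $L^2$-convergence of the components of $\Psi_k$ gives convergence of each inner product and hence of the whole sum. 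The Coulomb term is a nonnegative quadratic form in $\rho$ continuous on $L^{6/5}(\Omega)$ by the Hardy--Littlewood--Sobolev inequality, so strong $L^{6/5}$-convergence of $\rho_{\Psi_k}$ gives $D(\rho_{\Psi_k},\rho_{\Psi_k})\to D(\rho_\Psi,\rho_\Psi)$ (alternatively, this term is convex and continuous, hence itself weakly lsc, which is all we really need).

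The main obstacle is the exchange--correlation term $\int_\Omega\mathcal{E}(\rho_{\Psi_k})$, because $\mathcal{E}$ is not assumed convex and its growth sits at the borderline of the compact Sobolev embedding. I would split into the two cases allowed by the assumption $\mathcal{E}\in\mathscr{P}(p,(c_1,c_2))$. When $p=4/3$, the two-sided bound $c_1 t^{4/3}+a_1\le\mathcal{E}(t)\le c_2 t^{4/3}+a_2$ combined with strong convergence $\rho_{\Psi_k}\to\rho_\Psi$ in $L^{4/3}(\Omega)$ supplies an $L^1$-dominant of the form $c_2\rho_{\Psi_k}^{4/3}+|a_2|$ whose integrals converge; together with a.e.\ convergence this yields a generalized dominated convergence theorem and hence continuity of the XC term. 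When $p=3$, the hypothesis $c_1\ge 0$ makes $\mathcal{E}(t)-a_1\ge c_1 t^3\ge 0$, so Fatou's lemma applied to the nonnegative integrand $\mathcal{E}(\rho_{\Psi_k})-a_1$ (using the a.e.\ convergence of $\rho_{\Psi_k}$ and continuity of $\mathcal{E}$) gives $\int_\Omega\mathcal{E}(\rho_\Psi)\le\liminf_k\int_\Omega\mathcal{E}(\rho_{\Psi_k})$.

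Combining these five estimates — lsc for the kinetic and (in the cubic case) the XC term, and continuity for the other three — and using that $\liminf$ is superadditive, I obtain $E(\Psi)\le\liminf_{k\to\infty}E(\Psi_k)$, which is the desired inequality. I expect the whole argument to hinge on the XC dichotomy, which is the only step where the specific structural hypothesis $\mathcal{E}\in\mathscr{P}(p,(c_1,c_2))$ is used in an essential way.
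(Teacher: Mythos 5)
Your argument is correct and is essentially the standard term-by-term proof that the paper itself omits, deferring to the reference [chen-gong09]: weak lower semicontinuity of the kinetic part, strong $L^q$-convergence of $\rho_{\Psi_k}$ via Rellich for the local, nonlocal and Coulomb terms, and the dichotomy (generalized dominated convergence for the $4/3$-growth case, Fatou for the cubic case with $c_1\ge 0$) for the exchange--correlation term. Just make sure the subsequence you refine is one along which $E(\Psi_k)$ already converges to the $\liminf$ of the full sequence, and note that continuity of $\mathcal{E}$ on $[0,\infty)$, though not stated in the definition of $\mathscr{P}$, is implicitly assumed throughout the paper and is needed for the a.e.\ convergence step.
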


The ground state  energy of the system is the global minimum of
$E(\Psi)$ in the admissible class $\mathbb{Q}$ and we shall study
the following minimization problem
\begin{eqnarray}\label{problem-min}
\inf\left\{ E(\Phi):\Phi\in \mathbb{Q} \right\}.
\end{eqnarray}
The existence of a minimizer of (\ref{problem-min}) can be found in
\cite{anantharaman09,bris93,sur10} or by similar arguments to that
in the proof of Theorem \ref{theo-convergence}. We see from
Proposition \ref{proposition-invariant} that if $\Phi$ is a
minimizer of \eqref{problem-min}, then $\Phi U \in\mathbb{Q}$ is
also a minimizer for any  $U\in \mathcal{O}^{N\times N}$. Note that
the uniqueness of a minimizer of (\ref{problem-min}) is open even up
to an orthogonal transform since the energy functional may not be
convex for almost all systems of practical interest. Therefore, we
need to define the set of ground state solutions as follows
\begin{eqnarray*}\label{set-ground}
\mathcal{G}=\left\{\Phi\in\mathbb{Q}:E(\Phi)=\min_{\Psi\in\mathbb{Q}}E(\Psi)\right\}.
\end{eqnarray*}

We see that a minimizer $\Phi=(\phi_1,\phi_2,\cdots,\phi_N)$ of
\eqref{problem-min} satisfies the associated Euler-Lagrange
equation:
\begin{eqnarray}\label{problem-eigen-compact-L}
\left\{ \begin{array}{rcl} (A_{\Phi}\phi_{i},v) &=& \displaystyle
\big( \sum_{j=1}^N \lambda_{ij}\phi_{j},v \big)
\quad\forall ~v\in H_0^1(\Omega), \quad i=1,2,\cdots,N,\\[1ex]
\displaystyle \int_{\Omega}\phi_{i}\phi_{j} &=& \delta_{ij},
\end{array} \right.
\end{eqnarray}
where $A_{\Phi}$ is the Kohn-Sham Hamiltonian operator given by
\begin{eqnarray}\label{eq-operator-A}
A_{\Phi} = -\frac{1}{2}\Delta + V_{loc} + V_{nl} +
\int_{\Omega}\frac{\rho_{\Phi}(y)}{|\cdot-y|}dy +
\mathcal{E}'(\rho_{\Phi})
\end{eqnarray}
with the Lagrange multiplier
\begin{eqnarray}\label{eq-Lambda}
\Lambda=(\lambda_{ij})_{i,j=1}^N =\left(
\int_{\Omega}\phi_jA_{\Phi}\phi_i \right)_{i,j=1}^N.
\end{eqnarray}
We define the set of ground state eigenpairs by
\begin{eqnarray*}
\Theta=\left\{(\Lambda,\Phi)\in\mathbb{R}^{N\times
N}\times\mathbb{Q}: \Phi\in \mathcal{G}~\mbox{and}~
(\Lambda,\Phi)\mbox{ solves
\eqref{problem-eigen-compact-L}}\right\}.
\end{eqnarray*}
Proposition \ref{proposition-coercive} and \eqref{eq-Lambda} imply that the ground state solutions are uniformly bounded
\begin{eqnarray}\label{eq-bounded1}
\sup_{(\Lambda,\Phi)\in \Theta}(\|\Phi\|_{1,\Omega}+|\Lambda|)<C
\end{eqnarray}
for some constant $C$.

To obtain the a priori error estimates of the finite dimensional
approximations, we shall represent Kohn-Sham equation in another
setting. Define
\begin{eqnarray*}
Y=\mathbb{R}^{N\times N}\times\mathcal{H}
\end{eqnarray*}
with the associated norm $\|(\Lambda,\Phi)\|_Y=|\Lambda|+\|\Phi\|_{1,\Omega}$ for each $(\Lambda,\Phi)\in Y$.
We may rewrite \eqref{problem-eigen-compact-L} as a nonlinear problem
as follows:
\begin{eqnarray}\label{problem-abstract}
F((\Lambda,\Phi))=0\in Y^*,
\end{eqnarray}
where $F:Y\rightarrow Y^*$ is given by
\begin{eqnarray}\label{eq-F}
\langle F((\Lambda,\Phi)),(\bbchi,\Gamma) \rangle = \sum_{i=1}^N
\big( A_{\Phi}\phi_i-\sum_{j=1}^N \lambda_{ij}\phi_{j},\gamma_i
\big) +
\sum_{i,j=1}^N\chi_{ij}\big(\int_{\Omega}\phi_i\phi_j-\delta_{ij}\big)
\end{eqnarray}
with $\Gamma=(\gamma_1,\gamma_2,\cdots,\gamma_N)\in\mathcal{H}$ and
$\bbchi=(\chi_{ij})_{i,j=1}^N\in \mathbb{R}^{N\times N}$.

The Fr\'{e}chet derivative $F'_{(\Lambda,\Phi)}$ of $F$ at
$(\Lambda,\Phi): Y\rightarrow Y^*$ is defined as
\begin{eqnarray}\label{eq-F-derivative}
& &\langle F'_{(\Lambda,\Phi)}((\bbmu,\Psi)),(\bbchi,\Gamma) \rangle\nonumber\\
&=& \langle \mathcal{L}'_{\Phi}(\Lambda,\Phi)\Psi,\Gamma \rangle -
\sum_{i,j=1}^N (\mu_{ij}\phi_{j}, \gamma_i)  + \sum_{i,j=1}^N
\chi_{ij}\int_{\Omega}(\psi_i\phi_j+\phi_i\psi_j) ~~\forall~
(\bbmu,\Psi), (\bbchi,\Gamma)\in Y,
\end{eqnarray}
where
\begin{eqnarray}\label{eq-L}
&&\langle \mathcal{L}'_{\Phi}(\Lambda,\Phi)\Psi,\Gamma \rangle
=\frac{1}{2}E''(\Phi)(\Psi,
\Gamma)-\sum_{i,j=1}^N(\lambda_{ij}\psi_j,\gamma_i)\nonumber\\
&=&\sum_{i=1}^N\big( \frac{1}{2}(\nabla\psi_i,\nabla \gamma_i) +
(V_{loc}\psi_i,\gamma_i) +
\sum_{j=1}^M(\zeta_j,\psi_i)(\zeta_j,\gamma_i) +
(\mathcal{E}'(\rho_{\Phi})\psi_i,\gamma_i)+ D(\rho_{\Phi},\psi_i
\gamma_i) \nonumber\\ &&- (\sum_{j=1}^N \lambda_{ij}\psi_{j},
\gamma_i) + \big(2\phi_i \mathcal{E}''(\rho_{\Phi})\sum_{j=1}^N
\phi_j\psi_j,\gamma_i \big) + \sum_{j=1}^N 2D(\phi_j\psi_j,\phi_i
\gamma_i)\big)
\end{eqnarray}
for $\Psi=(\psi_1,\psi_2,\cdots,\psi_N)\in\mathcal{H}$ and
$\bbmu=(\mu_{ij})_{i,j=1}^N\in \mathbb{R}^{N\times N}$.

\subsection{Basic assumptions}
The analysis of finite dimensional approximations will be carried
out under certain  assumptions, which are stated as follows
\begin{flushleft}
{\bf A1}~~ $|\mathcal{E}'(t)|+|t\mathcal{E}''(t)|\in
\mathscr{P}(p_1,(c_1,c_2))$ for some $p_1\in [0,2]$.
\\[1ex]
{\bf A2}~~ There exists a constant $\alpha\in(0,1]$ such that
$|\mathcal{E}''(t)| +|t\mathcal{E}'''(t)|\lesssim 1+t^{\alpha-1}
\quad \forall~ t>0$.
\\[1ex]
 {\bf A3}~~ If $(\Lambda,\Phi)$ is a solution of \eqref{problem-eigen-compact-L}, then
$\mathcal{L}'_{\Phi}(\Lambda,\Phi)$ is an isomorphism from
$\mathcal{T}_{\Phi}$ to $\mathcal{T}_{\Phi}$, namely, there exists a
positive constant $\gamma$ depending on $(\Lambda,\Phi)$ such that
\begin{eqnarray}\label{assumption-a2}
\inf_{\Psi\in\mathcal{T}_{\Phi}}\sup_{\Gamma\in\mathcal{T}_{\Phi}}
\frac{\langle\mathcal{L}'_{\Phi}(\Lambda,\Phi)\Psi,\Gamma\rangle}
{\|\Psi\|_{1,\Omega}\|\Gamma\|_{1,\Omega}} \geq \gamma.
\end{eqnarray}
\end{flushleft}

We see that Assumption {\bf A2} implies  Assumption {\bf A1} and the
commonly used $X_{\alpha}$ and LDA exchange-correction energy
satisfy Assumption {\bf A2} \cite{cancesM09,chen-gong09}. We shall
mention that the above assumptions are necessary for the a priori error estimate,
but none of them
will be used in our convergence analysis of finite dimensional approximations (in Section \ref{subsec-convergence}).

\begin{remark}
It is open whether Assumption {\bf A3} holds for all Kohn-Sham
models, though it may hold for semiconductors and ``closed shell"
atoms and molecules.
We see that the following assumption
\begin{eqnarray}\label{coervicty}
\langle\mathcal{L}'_{\Phi}(\Lambda,\Phi)\Psi,\Psi\rangle \geq\gamma\|\Psi\|^2_{1,\Omega} \quad \forall~\Psi\in\mathcal{T}_{\Phi},
\end{eqnarray}
which implies \eqref{assumption-a2}, is employed in  \cite{cancesM10,schneider09}.   Note that (\ref{coervicty}) is equivalent to \eqref{assumption-a2} when $(\Lambda,\Phi)$ is the ground state
solution of \eqref{problem-eigen-compact-L}.
\end{remark}

The following lemma will be used in our analysis of the local
uniqueness of discrete solution.
\begin{lemma}\label{lemma-con-holder}
Let $y_1=(\Lambda_1,\Phi_1)$ and $y_2=(\Lambda_2,\Phi_2)\in Y$
satisfy $\|y_1\|_Y+\|y_2\|_Y\leq \bar{C}$. If Assumption {\bf A1} is
satisfied, then there exists a constant $C_F$ depending on $\bar{C}$
such that
\begin{eqnarray}\label{eq-lipschitz}
\|F(y_1)-F(y_2)\|\leq C_{F}\|y_1-y_2\|_Y ~~\forall~ y_1,y_2\in Y.
\end{eqnarray}
Moreover, if Assumption {\bf A2} is satisfied, then there is a
constant $C'_{F}$ such that
\begin{eqnarray}\label{eq-holder}
\|F'_{y_1}-F'_{y_2}\|\leq
C'_{F}(\|y_1-y_2\|^{\alpha}_Y+\|y_1-y_2\|^2_Y)~~\forall~ y_1,y_2\in
Y.
\end{eqnarray}
\end{lemma}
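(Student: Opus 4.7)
The plan is to expand $F$ and $F'$ term-by-term using \eqref{eq-F} and \eqref{eq-F-derivative}, test each piece against arbitrary unit-norm arguments $(\bbchi,\Gamma)\in Y$ (respectively $(\bbmu,\Psi),(\bbchi,\Gamma)\in Y$), and control the resulting integrals via three standard tools: the Sobolev embedding $H^1_0(\Omega)\hookrightarrow L^p(\Omega)$ for $p\in[1,6]$, the Hardy--Littlewood--Sobolev inequality $|D(f,g)|\lesssim\|f\|_{L^{6/5}}\|g\|_{L^{6/5}}$ for the Coulomb kernel, and the growth/regularity information on $\mathcal{E}',\mathcal{E}'',\mathcal{E}'''$ supplied by \textbf{A1} and \textbf{A2}. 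The a priori bound $\|y_1\|_Y+\|y_2\|_Y\leq\bar{C}$ will be used throughout to absorb higher-order polynomial factors into the constants.

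For the Lipschitz estimate \eqref{eq-lipschitz}, the kinetic, local-potential, non-local-potential, Lagrange-multiplier and orthonormality-constraint pieces in \eqref{eq-F} are at most bilinear in $(\Lambda,\Phi)$ and yield a bound $\lesssim\|y_1-y_2\|_Y$ directly by Cauchy--Schwarz with $H^1\hookrightarrow L^4$; likewise for the Hartree contribution $\int_\Omega\bigl(\int\rho_\Phi/|\cdot-y|\,dy\bigr)\phi_i\gamma_i$ after applying HLS. The only genuinely nonlinear term is the exchange-correlation one, which splits as
\begin{equation*}
\bigl(\mathcal{E}'(\rho_{\Phi_1})(\phi_{1i}-\phi_{2i}),\gamma_i\bigr)+\bigl((\mathcal{E}'(\rho_{\Phi_1})-\mathcal{E}'(\rho_{\Phi_2}))\phi_{2i},\gamma_i\bigr).
\end{equation*}
The first summand is handled by $|\mathcal{E}'(t)|\leq c_2 t^{p_1}+a_2$ from \textbf{A1} (with $p_1\leq 2$) and a Hölder/Sobolev bookkeeping. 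For the second, I write $\mathcal{E}'(\rho_1)-\mathcal{E}'(\rho_2)=\int_0^1\mathcal{E}''(\rho_\tau)\,d\tau\cdot(\rho_1-\rho_2)$ with $\rho_\tau=\rho_2+\tau(\rho_1-\rho_2)$ and factor $\rho_1-\rho_2=\sum_k(\phi_{1k}+\phi_{2k})(\phi_{1k}-\phi_{2k})$; the bound $|t\mathcal{E}''(t)|\lesssim t^{p_1}+1$ from \textbf{A1} shows that the singularity $|\mathcal{E}''(\rho_\tau)|\sim\rho_\tau^{p_1-1}$ at $\rho_\tau=0$ is absorbed by the accompanying products of the wavefunctions (which vanish pointwise at least as fast as $\rho_\tau^{1/2}$), and a final Hölder application completes \eqref{eq-lipschitz}.

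The Hölder estimate \eqref{eq-holder} proceeds along the same lines: the pieces of \eqref{eq-L} that depend polynomially on $(\Lambda,\Phi)$ --- kinetic, local, non-local, the $\Lambda$-term, the two Hartree contributions $D(\rho_\Phi,\psi_i\gamma_i)$ and $D(\phi_j\psi_j,\phi_i\gamma_i)$, and the constraint term in \eqref{eq-F-derivative} --- contribute a clean Lipschitz piece $\lesssim\|y_1-y_2\|_Y$, with the quadratic cross-terms produced by the quartic Hartree contribution accounting for the $\|y_1-y_2\|_Y^2$ remainder in \eqref{eq-holder}. The delicate ingredient is $\mathcal{E}''(\rho_{\Phi_1})-\mathcal{E}''(\rho_{\Phi_2})$: \textbf{A2} gives only $|\mathcal{E}''(t)|\lesssim 1+t^{\alpha-1}$ and $|\mathcal{E}'''(t)|\lesssim t^{-1}+t^{\alpha-2}$, so the fundamental theorem of calculus is unusable when $\alpha<1$. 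Instead I would split into the regime $\min(\rho_1,\rho_2)\geq|\rho_1-\rho_2|$ (integrating $\mathcal{E}'''$ on $[\rho_2,\rho_1]$ using $|\mathcal{E}'''(s)|\lesssim s^{\alpha-2}$ gives a bound of the form $|\rho_1-\rho_2|^\alpha\min(\rho_1,\rho_2)^{-1}$) and its complement (where $|\mathcal{E}''(\rho_1)|+|\mathcal{E}''(\rho_2)|\lesssim 1+|\rho_1-\rho_2|^{\alpha-1}$ directly). The residual factor of $\rho_\tau^{-1}$ or $\rho_\tau^{\alpha-1}$ is then compensated by the $|\phi_i\phi_j|$ factor supplied by the quartic structure of \eqref{eq-L}, and Sobolev interpolation converts $|\rho_1-\rho_2|^\alpha$ into $\|\Phi_1-\Phi_2\|_{1,\Omega}^\alpha$. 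This Hölder analysis of $\mathcal{E}''$ --- trading the singularity of $\mathcal{E}'''$ at the origin against the simultaneous vanishing of $\phi_i$ and $\rho_\Phi$ --- is the main obstacle in the proof.
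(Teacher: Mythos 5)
Your overall plan---term-by-term estimation of \eqref{eq-F} and \eqref{eq-F-derivative} via H\"older, Sobolev and Young inequalities, with the exchange-correlation term as the crux---is the same as the paper's, and your treatment of the kinetic, local, nonlocal and Coulomb pieces is fine. The genuine gap is in how you \emph{difference} the exchange-correlation terms. For \eqref{eq-lipschitz} you split off $\bigl((\mathcal{E}'(\rho_{\Phi_1})-\mathcal{E}'(\rho_{\Phi_2}))\phi_{2,i},\gamma_i\bigr)$ and integrate $\mathcal{E}''$ along the density segment $\rho_\tau=\rho_{\Phi_2}+\tau(\rho_{\Phi_1}-\rho_{\Phi_2})$. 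But Assumption {\bf A1} only yields $|\mathcal{E}''(t)|\lesssim t^{p_1-1}+t^{-1}$, and the factor $|\rho_{\Phi_1}-\rho_{\Phi_2}|$ is \emph{not} pointwise dominated by $\rho_\tau$; carrying out the $\tau$-integration of the $t^{-1}$ part against $|\rho_{\Phi_1}-\rho_{\Phi_2}|\,|\phi_{2,i}|$ produces quantities of the type $|\log(\rho_{\Phi_1}/\rho_{\Phi_2})|\,\rho_{\Phi_2}^{1/2}$, which exceed $\rho_{\Phi_1-\Phi_2}^{1/2}$ by an unbounded factor where $\rho_{\Phi_1}\ll\rho_{\Phi_2}$. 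So the claim that the singularity ``is absorbed by the accompanying products of the wavefunctions'' does not hold for this splitting. The paper's device is to apply the mean value theorem along the segment $\Phi_2+s(\Phi_1-\Phi_2)$ in the \emph{wavefunction} variable (see \eqref{proof-45}), so that $\mathcal{E}''$ only ever occurs in the combination $\xi_i\xi_j\,\mathcal{E}''(\rho_{\xi})$ with $|\xi_i\xi_j|\le\rho_{\xi}$ --- precisely the quantity $t\mathcal{E}''(t)$ that {\bf A1} controls.

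The same structural issue, in a more serious form, undermines your H\"older argument. Your case split on $\min(\rho_{\Phi_1},\rho_{\Phi_2})$ versus $|\rho_{\Phi_1}-\rho_{\Phi_2}|$ would indeed control the difference $\mathcal{E}''(\rho_{\Phi_1})-\mathcal{E}''(\rho_{\Phi_2})$, but any static decomposition of $\mathcal{E}''(\rho_{\Phi_1})\phi_{1,i}\phi_{1,j}-\mathcal{E}''(\rho_{\Phi_2})\phi_{2,i}\phi_{2,j}$ necessarily leaves a companion term $\mathcal{E}''(\rho_{\Phi_2})\bigl(\phi_{1,i}\phi_{1,j}-\phi_{2,i}\phi_{2,j}\bigr)$ in which the singular weight is evaluated at $\rho_{\Phi_2}$ while the wavefunction factor is only controlled by $\rho_{\Phi_2}^{1/2}\rho_{\Phi_1-\Phi_2}^{1/2}+\rho_{\Phi_1-\Phi_2}$; on a set where $\Phi_2$ vanishes but $\Phi_1$ does not, this produces $\rho_{\Phi_2}^{\alpha-1}\rho_{\Phi_1-\Phi_2}=+\infty$, so the decomposition itself is divergent and cannot be estimated. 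The paper's resolution is again dynamic: differencing along the path $\Phi(t)=\Phi_1+t(\Phi_2-\Phi_1)$ as in \eqref{tmp-1}--\eqref{proof-73} leaves the residual weight $\rho_{\Phi(t)}^{\alpha-1/2}$ \emph{inside} a $t$-integral, and the explicit computation $\int_0^1\rho_{\Phi(t)}^{\alpha-1/2}\,dt\le(\alpha 2^{2\alpha})^{-1}\rho_{\Phi_1-\Phi_2}^{\alpha-1/2}$ (obtained by completing the square in $t$) converts the leftover half-power singularity into the positive power $\rho_{\Phi_1-\Phi_2}^{\alpha}$ once combined with the explicit factor $\rho_{\Phi_1-\Phi_2}^{1/2}$, cf.\ \eqref{proof-75}. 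This integral identity is the key quantitative step your proposal is missing; without it, or an equivalent device, neither the absorption needed for \eqref{eq-lipschitz} nor the exponent $\alpha$ in \eqref{eq-holder} is obtained.
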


\begin{proof}
To prove \eqref{eq-lipschitz}, it is sufficient to show that
\begin{eqnarray}\label{n-estimate}
\big( A_{\Phi_1}\Phi_1 - A_{\Phi_2}\Phi_2, \Gamma \big) \le C
\|\Phi_1-\Phi_2\|_{1,\Omega}\|\Gamma\|_{1,\Omega}\quad \forall~
\Gamma\in \mathcal{H},
\end{eqnarray}
which together with \eqref{eq-F} indeed implies
\eqref{eq-lipschitz}.
Using the H{\" o}lder inequality and the Sobolev inequality, we have
for $i=1,2,\cdots,N$ that
\begin{eqnarray*}
&&\big((-\frac{1}{2}\Delta+V_{loc})\phi_{1,i}-(-\frac{1}{2}\Delta+V_{loc})\phi_{2,i},v\big)\nonumber\\
&\leq&
\frac{1}{2}\|\phi_{1,i}-\phi_{2,i}\|_{1,\Omega}\|v\|_{1,\Omega} +
\|V_{loc}\|_{0,\Omega}\|\phi_{1,i}-\phi_{2,i}\|_{0,3,\Omega}\|v\|_{0,6,\Omega}
\\ &\lesssim& \|\phi_{1,i}-\phi_{2,i}\|_{1,\Omega}\|v\|_{1,\Omega} \quad\forall~v\in
H_0^1(\Omega)
\end{eqnarray*}
and hence
\begin{eqnarray}\label{proof-44}
\big((-\frac{1}{2}\Delta+V_{loc})\Phi_1-(-\frac{1}{2}\Delta+V_{loc})\Phi_2,\Gamma\big)
\lesssim\|\Phi_1-\Phi_2\|_{1,\Omega}\|\Gamma\|_{1,\Omega}
\quad\forall~\Gamma\in\mathcal{H}.
\end{eqnarray}
Due to
\begin{eqnarray*}
 \big(V_{nl}\Phi_1-V_{nl}\Phi_2,\Gamma\big)=\sum_{i=1}^N\big(\sum_{j=1}^M(\zeta_j,\phi_{1,i}-\phi_{2,i})\zeta_j,\gamma_i\big),
\end{eqnarray*}
we obtain
\begin{eqnarray}\label{proof-44-}
 \big(V_{nl}\Phi_1-V_{nl}\Phi_2,\Gamma\big) \lesssim
\sum_{i=1}^N\|\phi_{1,i}-\phi_{2,i}\|_{0,\Omega}\|\gamma_i\|_{0,\Omega}
~\lesssim~ \|\Phi_{1}-\Phi_2\|_{1,\Omega}\|\Gamma\|_{1,\Omega} \quad
\forall~\Gamma\in\mathcal{H}.
\end{eqnarray}

Obviously
\begin{eqnarray*}
(\mathcal{E}'(\rho_{\Phi_1})\Phi_1-\mathcal{E}'(\rho_{\Phi_2})\Phi_2,\Gamma)\lesssim
\|\Phi_1-\Phi_2\|_{1,\Omega}\|\Gamma\|_{1,\Omega}\quad
\forall~\Gamma\in \mathcal{H}
\end{eqnarray*}
when $p_1=0$ in Assumption {\bf A1}. If  Assumption {\bf A1} is
satisfied for $p_1\in (0,2]$, then there exists
 $\delta_i\in
[0,1]$ such that
\begin{eqnarray}\label{proof-45} \nonumber
(\mathcal{E}'(\rho_{\Phi_1})\Phi_1-\mathcal{E}'(\rho_{\Phi_2})\Phi_2,\Gamma)
&=& \sum_{i=1}^N\int_{\Omega} \big(
\mathcal{E}'(\rho_{\Phi_1})\phi_{1,i} -
\mathcal{E}'(\rho_{\Phi_2})\phi_{2,i} \big) \gamma_i \\\nonumber &=&
\sum_{i=1}^N\int_{\Omega}
(\mathcal{E}'(\rho_{\xi})+2\xi_i^2\mathcal{E}''(\rho_{\xi}))
(\phi_{1,i}-\phi_{2,i})\gamma_i \\\nonumber &\leq& \sum_{i=1}^N
\|\mathcal{E}'(\rho_{\xi})+2\xi_i^2\mathcal{E}''(\rho_{\xi})\|_{0,3/p_1,\Omega}
\|\phi_{1,i}-\phi_{2,i}\|_{0,6,\Omega} \|\gamma_i\|_{0,6/(5-2p_1),\Omega} \\
&\lesssim& \sum_{i=1}^N \|\rho_{\xi}\|_{0,3,\Omega}^{p_1}\|\phi_{1,i}-\phi_{2,i}\|_{1,\Omega}\|\gamma_i\|_{1,\Omega} \lesssim
\|\Phi_1-\Phi_2\|_{1,\Omega}\|\Gamma\|_{1,\Omega},
\end{eqnarray}
where $\xi=(\xi_1,\xi_2,\cdots,\xi_N)$ with
$\xi_i=\delta_i\phi_{1,i}+(1-\delta_i)\phi_{2,i}$, and the H{\"
o}lder inequality, the Sobolev inequality, and the fact
\begin{eqnarray*}
\|\rho_{\xi}\|_{0,3,\Omega}\lesssim
\|\rho_{\Phi_1}\|_{0,3,\Omega}+\|\rho_{\Phi_2}\|_{0,3,\Omega}
\lesssim\|\Phi_1\|^2_{1,\Omega}+\|\Phi_2\|^2_{1,\Omega}\le\bar{C}^2
\end{eqnarray*}
are used.

For Coulomb term, we obtain from the Young's inequality and the H{\"
o}lder inequality that
\begin{eqnarray*}
\|r^{-1}*(\rho_{\Phi_1}-\rho_{\Phi_2})\|_{0,\infty,\Omega} \lesssim
\|r^{-1}\|_{0,\tilde{\Omega}}\|\rho_{\Phi_1}-\rho_{\Phi_2}\|_{0,\Omega}
\lesssim \|r^{-1}\|_{0,\tilde{\Omega}}\|\Phi_1-\Phi_2\|_{1,\Omega},
\end{eqnarray*}
where $\tilde{\Omega} =\{x-y:x,y\in\Omega\}$.
Since
\begin{eqnarray*}\label{proof-43}\nonumber
&& \int_{\Omega} \big( (r^{-1}*\rho_{\Phi_1})\phi_{1,i} -
(r^{-1}*\rho_{\Phi_2})\phi_{2,i} \big) v\\ \nonumber &=&
\int_{\Omega} (r^{-1}*\rho_{\Phi_1})(\phi_{1,i}-\phi_{2,i})v +
\int_{\Omega} r^{-1}*(\rho_{\Phi_1}-\rho_{\Phi_2})\phi_{2,i} v \\
\nonumber &\leq& \|r^{-1}*\rho_{\Phi_1}\|_{0,\infty,\Omega}
\|\phi_{1,i}-\phi_{2,i}\|_{0,\Omega} \|v\|_{0,\Omega} +
\|r^{-1}*(\rho_{\Phi_1}-\rho_{\Phi_2})\|_{0,\infty,\Omega}
\|\phi_{2,i}\|_{0,\Omega} \|v\|_{0,\Omega}
\\ &\lesssim& \|\phi_{1,i}-\phi_{2,i}\|_{1,\Omega}\|v\|_{1,\Omega} +
\|\Phi_1-\Phi_2\|_{1,\Omega}\|v\|_{1,\Omega} \quad\forall ~v\in
H_0^1(\Omega)
\end{eqnarray*}
holds for $i=1,2,\cdots,N$, we have
\begin{eqnarray}\label{proof-46}
((r^{-1}*\rho_{\Phi_1})\Phi_1 - (r^{-1}*\rho_{\Phi_2})\Phi_2,\Gamma)
\lesssim \|\Phi_1-\Phi_2\|_{1,\Omega}\|\Gamma\|_{1,\Omega} \quad
\forall~ \Gamma\in \mathcal{H}.
\end{eqnarray}
Taking \eqref{proof-44}, \eqref{proof-44-}, \eqref{proof-45},
\eqref{proof-46} and  definition \eqref{eq-operator-A} into account,
we then arrive at (\ref{n-estimate}).

If Assumption {\bf A2} holds, then following \cite[Lemma
4.5]{cancesM10} we obtain for $\Psi=(\psi_1,\psi_2,\cdots,\psi_N),
\Gamma=(\gamma_1,\gamma_2,\cdots,\gamma_N)\in\mathcal{H}$ that
\begin{eqnarray}\label{tmp-1}
|(\mathcal{E}'(\rho_{\Phi_1})\Psi,\Gamma) -
(\mathcal{E}'(\rho_{\Phi_2})\Psi,\Gamma)|&=& \int_{\Omega}\int_0^1
2\mathcal{E}''(\rho_{\Phi(t)})(\sum_{i=1}^N\phi_i(t)(\phi_{1,i}-\phi_{2,i}))(\sum_{i=1}^N\psi_i\gamma_i)dt\nonumber\\
&\lesssim&\int_{\Omega}\int_{0}^1(1+\rho_{\Phi(t)}^{\alpha-1})
\rho_{\Phi(t)}^{1/2}\rho_{\Phi_1-\Phi_2}^{1/2}\rho_{\Psi}^{1/2}\rho_{\Gamma}^{1/2}dt\nonumber\\
\end{eqnarray}
and
\begin{eqnarray}\label{proof-73}
&& \sum_{i=1}^N(\phi_{1,i} \mathcal{E}''(\rho_{\Phi_1})\sum_{j=1}^N \phi_{1,j}\psi_j,\gamma_i) - \sum_{i=1}^N(\phi_{2,i}
\mathcal{E}''(\rho_{\Phi_2})\sum_{j=1}^N \phi_{2,j}\psi_j,\gamma_i) \nonumber\\ &=& \int_{\Omega}\int_0^1 \left[
\mathcal{E}''(\rho_{\Phi(t)})\big(\sum_{i=1}^N\phi_i(t)\psi_i\big) \big(\sum_{i=1}^N(\phi_{1,i}-\phi_{2,i})\gamma_i\big) +
\mathcal{E}''(\rho_{\Phi(t)})\big(\sum_{i=1}^N (\phi_{1,i}-\phi_{2,i})\psi_i\big) \big(\sum_{i=1}^N \phi_i(t) \gamma_i\big) \right.\nonumber\\  && +
\left. \mathcal{E}'''(\rho_{\Phi(t)})\big(\sum_{i=1}^N\phi_i(t)(\phi_{1,i}-\phi_{2,i})\big) \big(\sum_{i=1}^N\phi_i(t)\psi_i\big) \big(\sum_{i=1}^N
\phi_i(t)\gamma_i\big) \right]dt  \nonumber \\&\lesssim & \int_{\Omega}\int_0^1 (1+\rho_{\Phi(t)}^{\alpha-1})\rho_{\Phi(t)}^{1/2}
\rho_{\Phi_1-\Phi_2}^{1/2}\rho_{\Psi}^{1/2}\rho_{\Gamma}^{1/2}dt ,
\end{eqnarray}
where $\Phi(t)=\Phi_1+t(\Phi_2-\Phi_1)$ with $t\in [0,1]$.

For all $0<\alpha \leq 1/2$, we have
\begin{eqnarray*}
&&\int_0^1 \rho_{\Phi(t)}^{\alpha-1/2}dt=\int_0^1\big(\sum_{i=1}^N \phi_{1,i}^2+2t\sum_{i=1}^N \phi_{1,i}(\phi_{2,i}-\phi_{1,i})+t^2\sum_{i=1}^N
(\phi_{2,i}-\phi_{1,i})^2\big)^{\alpha-1/2}dt\nonumber\\
&=&\int_0^1 \left(\sum_{i=1}^N \phi_{1,i}^2+\sum_{i=1}^N(\phi_{2,i}-\phi_{1,i})^2\big(t+\frac{\sum_{i=1}^N
\phi_{1,i}(\phi_{2,i}-\phi_{1,i})}{\sum_{i=1}^N(\phi_{2,i}-\phi_{1,i})^2}\big)^2-\frac{\big(\sum_{i=1}^N\phi_{1,i}(\phi_{2,i}-\phi_{1,i})\big)^2}{\sum_{i=1}^N
(\phi_{2,i}-\phi_{1,i})^2}\right)^{\alpha-1/2}dt\nonumber\\
&\leq & \int_0^1 |t+\frac{\sum_{i=1}^N
\phi_{1,i}(\phi_{2,i}-\phi_{1,i})}{\sum_{i=1}^N(\phi_{2,i}-\phi_{1,i})^2}|^{2\alpha-1}\big(\sum_{i=1}^N(\phi_{2,i}-\phi_{1,i})^2\big)^{\alpha-1/2}dt\leq
\frac{1}{\alpha 2^{2\alpha}}\rho_{\Phi_1-\Phi_2}^{\alpha-1/2},
\end{eqnarray*}
which together with the fact that $0\leq \rho_{\Phi(t)}\leq 2(\rho_{\Phi_1}+t^2\rho_{\Phi_1-\Phi_2})$ implies that for all $0< \alpha \leq 1$
\begin{eqnarray}\label{proof-75}
&&\int_{\Omega}\int_0^1 (1+\rho_{\Phi(t)}^{\alpha-1})\rho_{\Phi(t)}^{1/2} \rho_{\Phi_1-\Phi_2}^{1/2}\rho_{\Psi}^{1/2}\rho_{\Gamma}^{1/2}dt\lesssim
\int_{\Omega}(\rho_{\Phi_1-\Phi_2}^{\alpha/2}+\rho_{\Phi_1-\Phi_2})\rho_{\Psi}^{1/2}\rho_{\Gamma}^{1/2}\nonumber\\
&\lesssim&\|\rho_{\Phi_1-\Phi_2}^{\alpha/2}\|_{0,6/\alpha,\Omega}\|\rho_{\Psi}^{1/2}\|_{0,12/(6-\alpha),\Omega}\|\rho_{\Gamma}^{1/2}\|_{0,12/(6-\alpha),\Omega}
+\|\rho_{\Phi_1-\Phi_2}\|_{0,3,\Omega}\|\rho_{\Psi}^{1/2}\|_{0,3,\Omega}\|\rho_{\Gamma}^{1/2}\|_{0,3,\Omega}\nonumber\\
&\lesssim & (\|\Phi_1-\Phi_2\|_{1,\Omega}^{\alpha}+\|\Phi_1-\Phi_2\|_{1,\Omega}^2)\|\Psi\|_{1,\Omega}\|\Gamma\|_{1,\Omega}
\end{eqnarray}

Similar arguments to that in \eqref{proof-46} yield that
\begin{eqnarray}\label{proof-74} \nonumber
&& \sum_{j=1}^N |D(\phi_{1,j}\psi_j,\phi_{1,i}v)-D(\phi_{2,j}\psi_j,\phi_{2,i}v)| \\
\nonumber  &\leq& \sum_{j=1}^N
|D(\phi_{1,j}\psi_j-\phi_{2,j}\psi_j,\phi_{1,i}v)| + \sum_{j=1}^N
|D(\phi_{2,j}\psi_j,\phi_{1,i}v-\phi_{2,i}v)| \\ \nonumber
&\lesssim&
\sum_{j=1}^N\|\phi_{1,j}-\phi_{2,j}\|_{1,\Omega}\|\psi_j\|_{1,\Omega}\|v\|_{1,\Omega}
+ \sum_{j=1}^N \|\phi_{1,i}-\phi_{2,i}\|_{1,\Omega}\|\psi_j\|_{1,\Omega}\|v\|_{1,\Omega} \\
&\lesssim&
\|\Phi_1-\Phi_2\|_{1,\Omega}\|\Psi\|_{1,\Omega}\|v\|_{1,\Omega}
\quad \forall~\Psi\in\mathcal{H},~\forall~v\in H_0^1(\Omega).
\end{eqnarray}

Therefore, taking \eqref{eq-F-derivative}, \eqref{eq-L}, \eqref{tmp-1}, \eqref{proof-73}, \eqref{proof-75} and \eqref{proof-74} into account, we get
\begin{eqnarray*}
\langle (F'_{y_1}-F'_{y_2}) ((\bbmu,\Psi)),(\bbchi,\Gamma) \rangle
\lesssim
(\|y_1-y_2\|^{\alpha}_Y+\|y_1-y_2\|^{2}_Y)\|(\bbmu,\Psi)\|_Y\|(\bbchi,\Gamma)\|_Y
\quad \forall~(\bbmu,\Psi),(\bbchi,\Gamma)\in Y,
\end{eqnarray*}
which implies \eqref{eq-holder} and completes the proof.

\end{proof}

\section{Finite dimensional approximations}
\label{sec-finite-anal}\setcounter{equation}{0}
 For the sake of
generality, we will not concentrate on any specific approximation,
rather we shall study approximations in a class of finite
dimensional subspaces $S_n\subset X~ (n=1,2,\cdots)$ that satisfy
\begin{eqnarray}\label{eq-approx-simple}
\lim_{n\to\infty}\inf_{\psi\in S_n}\|\psi-\phi\|_{1,\Omega}=0 \quad\forall~\phi\in X,
\end{eqnarray}
where $X$ is some Banach space containing the eigenfunctions of
(\ref{kohn-sham-equation}), say, $H^1_0(\Omega)$ or
$H^1_{\#}(\Omega)$.

Assumptions (\ref{eq-approx-simple}) is apparently very mild and satisfied by several typical finite dimensional subspaces used in practice, for
instance, spaces spanned by plane wave bases \cite{canuto07},
  spaces spanned by wavelets
\cite{arias99,genovese08},
 and piecewise polynomial finite element spaces
\cite{ciarlet78}.
As a result, we may investigate all these kinds of finite
dimensional approximation approaches in computational either physics
or quantum chemistry in a unified framework. For convenience, here
and hereafter we consider the case of $X=H^1_0(\Omega)$ only.

We see that finite dimensional subspaces
\begin{eqnarray*}
\mathcal{H}_n\equiv S_n^N\subset\mathcal{H}
\end{eqnarray*}
satisfying
\begin{eqnarray}\label{eq-approx}
\lim_{n\to\infty}\inf_{\Psi\in\mathcal{H}_n}
\|\Psi-\Phi\|_{1,\Omega}=0 \quad \forall~\Phi\in\mathcal{H}.
\end{eqnarray}
We shall study the numerical analysis of the following minimization
problem
\begin{eqnarray}\label{problem-min-dis}
\inf\{ E(\Phi_n):\Phi_n\in\mathcal{H}_n\cap\mathbb{Q} \}.
\end{eqnarray}
The existence of a minimizer of \eqref{problem-min-dis} can be
obtained by similar arguments to that in the proof of Theorem
\ref{theo-convergence} (c.f., also, \cite{cancesM10,chen-gong09}).
However, the uniqueness is unknown even up to a unitary transform.
Therefore we define the set of finite dimensional ground state
solutions:
\begin{eqnarray*}\label{set-ground-dis}
\mathcal{G}_n=\left\{\Phi_n\in\mathcal{H}_n\cap\mathbb{Q}:
E(\Phi_n)=\min_{\Psi\in\mathcal{H}_n\cap\mathbb{Q}}E(\Psi)\right\}.
\end{eqnarray*}
Given $ n\ge 1$, any minimizer
$\Phi_n=(\phi_{1,n},\phi_{2,n},\cdots,\phi_{N,n})$ of
\eqref{problem-min-dis} solves
\begin{eqnarray}\label{problem-eigen-dis}
\left\{ \begin{array}{rcl} (A_{\Phi_n}\phi_{i,n},v) &=&
\displaystyle \big( \sum_{j=1}^N \lambda_{ij,n}\phi_{j,n},v \big)
\quad\forall ~v\in S_n, \quad i=1,2,\cdots,N,\\[1ex]
\displaystyle \int_{\Omega}\phi_{i,n}\phi_{j,n} &=& \delta_{ij}
\end{array} \right.
\end{eqnarray}
with the Lagrange multiplier
\begin{eqnarray}\label{eq-Lambda-dis}
\Lambda_n=(\lambda_{ij,n})_{i,j=1}^N =\left(
\int_{\Omega}\phi_{j,n}A_{\Phi_n}\phi_{i,n} \right)_{i,j=1}^N.
\end{eqnarray}
Define the set of finite dimensional ground state eigenpairs
\begin{eqnarray*}\label{set-ground-dis}
\Theta_n=\left\{(\Lambda_n,\Phi_n)\in\mathbb{R}^{N\times N}\times
(\mathcal{H}_n\cap\mathbb{Q}): \Phi_n\in \mathcal{G}_n \mbox{ and
}(\Lambda_n,\Phi_n)\mbox{ solves }\eqref{problem-eigen-dis}
\right\}.
\end{eqnarray*}
Proposition \ref{proposition-coercive} and \eqref{eq-Lambda-dis} imply that the finite dimensional approximations are uniformly bounded
\begin{eqnarray}\label{eq-bounded}
\sup_{(\Lambda_n,\Phi_n)\in \Theta_n, n\geq 1}(\|\Phi_n\|_{1,\Omega}+|\Lambda_n|)<C
\end{eqnarray}
for some constant $C$.

 We then address the Galerkin discretization of \eqref{problem-abstract}. Let
\begin{gather*}
Y_n=\mathbb{R}^{N\times N}\times \mathcal{H}_n
\end{gather*}
and $F_n:Y_n\rightarrow Y_n^*$ be an approximation of $F$ defined by
\begin{eqnarray}\label{eq-F-h}\nonumber
\langle F_n((\Lambda_n,\Phi_n)),(\bbchi_n,\Gamma_n) \rangle =
\langle F((\Lambda_n,\Phi_n)),(\bbchi_n,\Gamma_n) \rangle
\quad\forall~(\Lambda_n,\Phi_n), (\bbchi_n,\Gamma_n)\in Y_n.
\end{eqnarray}
Then  discrete problem \eqref{problem-eigen-dis} can  be rewritten as
\begin{eqnarray}\label{problem-abstract-dis}
F_n((\Lambda_n,\Phi_n))=0\in Y_n^*.
\end{eqnarray}
We also denote the derivative of $F_n$ at $(\Lambda_n,\Phi_n)\in
Y_n$ by $F'_{n,(\Lambda_n,\Phi_n)}:Y_n\rightarrow Y_n^*$ as follows:
\begin{eqnarray*}\label{eq-F-derivative-h}\nonumber
\langle F'_{n,(\Lambda_n,\Phi_n)}((\bbmu_n,\Psi_n)),(\bbchi_n,\Gamma_n) \rangle &=& \langle \mathcal{L}'_{\Phi_n}(\Lambda_n,\Phi_n)\Psi_n,\Gamma_{n}
\rangle -\sum_{i,j=1}^N (\mu_{ij,n}\phi_{j,n}, \gamma_{i,n}) \\&& + \sum_{i,j=1}^N \chi_{ij,n}\int_{\Omega}(\psi_{i,n}\phi_{j,n}+\phi_{i,n}\psi_{j,n}).
\end{eqnarray*}
Given $(\Lambda,\Phi)\in \mathcal{S}^{N\times N}\times \mathbb{Q}$,
we define
$$
X_{\Phi}=\mathcal{S}^{N\times N}\times
(\mathcal{S}_{\Phi}\oplus\mathcal{T}_{\Phi})\subset Y
$$
with the induced norm $\|(\bbmu,\Psi)\|_{X_{\Phi}}=|\bbmu|+\|\Psi\|_{1,\Omega}$
for each $(\bbmu,\Psi)\in X_{\Phi}$
and
\begin{gather*}
X_{\Phi,n}=\mathcal{S}^{N\times N}\times(\mathcal{H}_n\cap
(\mathcal{S}_{\Phi}\oplus\mathcal{T}_{\Phi})).
\end{gather*}

We assume here and hereafter that $y_0\equiv(\Lambda_0,\Phi_0)$ is a solution of \eqref{problem-eigen-compact-L} satisfying \eqref{assumption-a2}, where
$\Lambda_0=(\lambda_{0,ij})^N_{i,j=1}$ and $\Phi_0=(\phi_{0,1},\phi_{0,2},\cdots,\phi_{0,N})$.
We shall derive the existence of a unique local discrete solution
$y_n\in X_{\Phi_0,n}$ of \eqref{problem-eigen-dis}
 in the neighborhood
of $y_0$.

\begin{lemma}\label{lemma-isomorphism}
 $F'_{y_0}:X_{\Phi_0}\rightarrow X_{\Phi_0}^*$ is an isomorphism.
\end{lemma}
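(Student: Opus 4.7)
The plan is to exploit the direct-sum decomposition $\mathcal{S}_{\Phi_0}\oplus\mathcal{T}_{\Phi_0}$ built into $X_{\Phi_0}$, together with the parameterization $\mathcal{S}_{\Phi_0}=\Phi_0\mathcal{S}^{N\times N}$, to recognize the bilinear form $B((\mu,\Psi),(\chi,\Gamma))=\langle F'_{y_0}((\mu,\Psi)),(\chi,\Gamma)\rangle$ as a block triangular operator with invertible diagonal blocks. Bijectivity of $F'_{y_0}$ then reduces to a sequential solve, while boundedness of the inverse follows from Banach's bounded inverse theorem once continuity of $F'_{y_0}$ is noted (itself a routine consequence of the estimates in the proof of Lemma~\ref{lemma-con-holder} and the uniform bound $\|\Phi_0\|_{1,\Omega}\le C$ from \eqref{eq-bounded1}).

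Concretely, I would first decompose any $\Psi\in\mathcal{S}_{\Phi_0}\oplus\mathcal{T}_{\Phi_0}$ as $\Psi=\Phi_0 K+\Psi_T$ with $K\in\mathcal{S}^{N\times N}$ and $\Psi_T\in\mathcal{T}_{\Phi_0}$, and similarly $\Gamma=\Phi_0 L+\Gamma_T$. Using $\Phi_0^T\Phi_0=I$, $\Psi_T^T\Phi_0=0=\Gamma_T^T\Phi_0$, and the symmetry of $K,L$, the algebraic identities
\[
\sum_{i,j}(\mu_{ij}\phi_{0,j},\gamma_i)=\mathrm{tr}(\mu L),\qquad \sum_{i,j}\chi_{ij}\int_{\Omega}(\psi_i\phi_{0,j}+\phi_{0,i}\psi_j)=2\,\mathrm{tr}(\chi K)
\]
recast $B$ as
\[
B=2\,\mathrm{tr}(\chi K)-\mathrm{tr}(\mu L)+\langle\mathcal{L}'_{\Phi_0}(\Lambda_0,\Phi_0)(\Phi_0 K+\Psi_T),\,\Phi_0 L+\Gamma_T\rangle.
\]
Ordering the trial variables as $(K,\Psi_T,\mu)$ and the test variables as $(\chi,\Gamma_T,L)$, the matrix of $B$ is block lower triangular, with diagonal blocks being the Frobenius pairing $2\,\mathrm{tr}(\chi K)$ on $\mathcal{S}^{N\times N}$, the restricted form $\langle\mathcal{L}'_{\Phi_0}(\Lambda_0,\Phi_0)\Psi_T,\Gamma_T\rangle$ on $\mathcal{T}_{\Phi_0}\times\mathcal{T}_{\Phi_0}$, and the Frobenius pairing $-\mathrm{tr}(\mu L)$ on $\mathcal{S}^{N\times N}$. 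Each is an isomorphism: the outer two because the Frobenius pairing on symmetric matrices is non-degenerate, the middle one precisely by Assumption \textbf{A3}.

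To invert $F'_{y_0}$, given an arbitrary functional on $X_{\Phi_0}$ I would solve in the order dictated by the triangular structure: (i) the $\chi$-equation alone determines $K\in\mathcal{S}^{N\times N}$ uniquely; (ii) with $K$ fixed, the $\Gamma_T$-equation becomes $\langle\mathcal{L}'_{\Phi_0}(\Lambda_0,\Phi_0)\Psi_T,\Gamma_T\rangle=[\text{datum}]-\langle\mathcal{L}'_{\Phi_0}(\Lambda_0,\Phi_0)\Phi_0 K,\Gamma_T\rangle$ for all $\Gamma_T\in\mathcal{T}_{\Phi_0}$, and is uniquely solvable by \textbf{A3}; (iii) the remaining $L$-equation is linear in $\mu$ with non-degenerate leading pairing and determines $\mu$. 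Each step comes with a quantitative bound on the solution in terms of the data, so the constructed inverse is bounded and $F'_{y_0}$ is an isomorphism from $X_{\Phi_0}$ onto $X_{\Phi_0}^*$.

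I do not foresee a deep obstacle; the argument is essentially algebraic. The only care needed is to verify the two trace identities above and to check that the two off-diagonal entries $\langle\mathcal{L}'_{\Phi_0}(\Lambda_0,\Phi_0)\Phi_0 K,\Gamma_T\rangle$ and $\langle\mathcal{L}'_{\Phi_0}(\Lambda_0,\Phi_0)\Psi_T,\Phi_0 L\rangle$—which in general do not vanish due to the nonlinear electron--electron and exchange--correlation contributions in \eqref{eq-L}—cause no harm: they sit strictly below the diagonal in the triangular arrangement and therefore only modify the right-hand sides of the subsequent solves. Once this triangular picture is set up, the whole proof collapses to a single invocation of \textbf{A3} together with elementary linear algebra on $\mathcal{S}^{N\times N}$.
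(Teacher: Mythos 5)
Your proposal is correct and follows essentially the same route as the paper: the paper also reduces $F'_{y_0}(\bbmu,\Psi)=(\bbeta,g)$ to a sequential (triangular) solve in which the constraint equation first determines the $\mathcal{S}_{\Phi_0}$-component of $\Psi$ (via the non-degenerate pairing $c_{\Phi_0}(\Phi_0\bbchi,\bbchi)=2\sum_{i,j}|\chi_{ij}|^2$, i.e.\ your $2\,\mathrm{tr}(\chi K)$ identity), then Assumption \textbf{A3} determines the $\mathcal{T}_{\Phi_0}$-component, and finally the remaining test directions in $\mathcal{S}_{\Phi_0}$ determine $\bbmu$ through the non-degenerate pairing $b_{\Phi_0}$. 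The only cosmetic difference is that the paper packages the diagonal blocks as inf-sup conditions for the auxiliary bilinear forms $b_{\Phi_0}$ and $c_{\Phi_0}$ rather than as explicit Frobenius trace pairings.
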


\begin{proof}
It is sufficient to prove that  equation
\begin{eqnarray}\label{eq-solvable}
F'_{y_0} ((\bbmu,\Psi)) = (\bbeta,g)
\end{eqnarray}
is uniquely solvable in $X_{\Phi_0}$ for every $(\bbeta,g)\in
X_{\Phi_0}^*$. To this end we define the following bilinear forms
$a_{\Phi_0}:\mathcal{H}\times\mathcal{H}\rightarrow\mathbb{R}$ and
$b_{\Phi_0},c_{\Phi_0}:\mathcal{H}\times\mathbb{R}^{N\times
N}\rightarrow\mathbb{R}$ by
\begin{eqnarray*}
a_{\Phi_0}(\Psi,\Gamma) &=& \langle
\mathcal{L}'_{\Phi_0}(\Lambda_0,\Phi_0)\Psi,\Gamma \rangle, \\[1ex]
b_{\Phi_0}(\Psi,\bbchi) &=& \sum_{i,j=1}^N \chi_{ij}(\phi_{0,i},\psi_j), \\[1ex]
c_{\Phi_0}(\Psi,\bbchi) &=& \sum_{i,j=1}^N \chi_{ij} \big(
(\phi_{0,i},\psi_j)+(\phi_{0,j},\psi_i) \big).
\end{eqnarray*}

Using \eqref{eq-F-derivative}, we may rewrite \eqref{eq-solvable} as
follows: find $\bbmu\in\mathcal{S}^{N\times N}$ and
$\Psi\in\mathcal{S}_{\Phi_0}\oplus\mathcal{T}_{\Phi_0}$ such that
\begin{eqnarray}\label{problem-saddle}
\left\{ \begin{array}{rcll} a_{\Phi_0}(\Psi,\Gamma) -
b_{\Phi_0}(\Gamma,\bbmu) &=& (g,\Gamma) &\quad\forall
~\Gamma\in\mathcal{S}_{\Phi_0}\oplus\mathcal{T}_{\Phi_0}, \\[1ex]
c_{\Phi_0}(\Psi,\bbchi) &=& \displaystyle\sum_{i,j=1}^N
\chi_{ij}\eta_{ij} &\quad\forall ~\bbchi\in\mathcal{S}^{N\times N}.
\end{array} \right.
\end{eqnarray}

For any given $\bbchi\in\mathcal{S}^{N\times N}$, we can choose
$\Psi=\Phi_0\bbchi$, and thus
\begin{eqnarray}\label{proof-14}
c_{\Phi_0}(\Psi,\bbchi)=2\sum_{i,j=1}^N |\chi_{ij}|^2,
\end{eqnarray}
where $\Phi_0^T\Phi_0=I^{N\times N}$ is used.
Note that a simple calculation leads to
\begin{eqnarray}\label{proof-15}
\|\Psi\|_{1,\Omega}=\|\Phi_0\bbchi\|_{1,\Omega} \lesssim
(\sum_{i,j=1}^N |\chi_{ij}|^2)^{1/2}\|\Phi_0\|_{1,\Omega}.
\end{eqnarray}
By taking into account \eqref{eq-coercive}, \eqref{proof-14} and
\eqref{proof-15}, we obtain
\begin{eqnarray}\label{proof-12}
\inf_{\chi\in\mathcal{S}^{N\times N}}
\sup_{\Psi\in\mathcal{S}_{\Phi_0}}
\frac{c_{\Phi_0}(\Psi,\bbchi)}{\|\Psi\|_{1,\Omega} (\sum_{i,j=1}^N
|\chi_{ij}|^2)^{1/2}} ~\geq~ \kappa_c,
\end{eqnarray}
where $\kappa_c>0$ is independent of $\bbchi$.
Hence, there exists a unique solution $\Psi_S\in
\mathcal{S}_{\Phi_0}$ such that
$$
c_{\Phi_0}(\Psi_S,\bbchi) = \displaystyle\sum_{i,j=1}^N
\chi_{ij}\eta_{ij} \quad\forall ~\bbchi\in\mathcal{S}^{N\times N}.
$$
Therefore \eqref{problem-saddle} is equivalent to: find
$\Psi_0\in\mathcal{T}_{\Phi_0}$ such that
\begin{eqnarray}\label{proof-a1}
a_{\Phi_0}(\Psi_0,\Gamma) = (g,\Gamma)-a_{\Phi_0}(\Psi_S,\Gamma)
\quad\forall ~\Gamma\in\mathcal{T}_{\Phi_0}.
\end{eqnarray}
The unique solvability of (\ref{proof-a1}) is a direct consequence
of \eqref{assumption-a2}.

Using similar arguments to that from \eqref{proof-14} to
\eqref{proof-12}, we get
\begin{eqnarray*}\label{proof-a2}
\inf_{\chi\in\mathcal{S}^{N\times N}}
\sup_{\Psi\in\mathcal{S}_{\Phi_0}}
\frac{b_{\Phi_0}(\Psi,\bbchi)}{\|\Psi\|_{1,\Omega} (\sum_{i,j=1}^N
|\chi_{ij}|^2)^{1/2}} ~\geq~ \kappa_b,
\end{eqnarray*}
where $\kappa_b>0$ is independent of $\bbchi$. This implies that
equation
\begin{eqnarray*}
b_{\Phi_0}(\Gamma,\bbmu) =
 a_{\Phi_0}(\Psi_0+\Psi_S,\Gamma)-(g,\Gamma) \quad\forall
~\Gamma\in\mathcal{S}_{\Phi_0}  
\end{eqnarray*}
has a unique solution $\bbmu_S\in \mathcal{S}^{N\times N}$.

We have proved that for any $(\bbeta,g)\in X_{\Phi_0}^*$ in
\eqref{problem-saddle}, there exists a unique solution
$(\bbmu_S,\Psi_0+\Psi_S)$. This indicates that $F'_{y_0}$ is an
isomorphism from $X_{\Phi_0}$ to $X_{\Phi_0}^*$ and completes the
proof.
\end{proof}

Note that $F'_{y_0}:X_{\Phi_0}\rightarrow X_{\Phi_0}^*$ being an
isomorphism is equivalent to the following inf-sup condition
\begin{eqnarray}\label{inf-sup}
\inf_{y_1\in X_{\Phi_0}}\sup_{y_2\in X_{\Phi_0}}\frac{\langle
F'_{y_0} y_1,y_2 \rangle}{\|y_1\|_{X_{\Phi_0}}\|y_2\|_{X_{\Phi_0}}}
= \beta>0
\end{eqnarray}
with the constant satisfying $\beta^{-1}=\|F'_{y_0}\hskip -0.2cm
^{-1}\|$.

For any $\Phi\in\mathbb{Q}$, we define
\begin{eqnarray*}
\mathbb{Q}^{\Phi}=\{\Psi\in\mathbb{Q}:
\|\Psi-\Phi\|_{0,\Omega}=\min_{U\in \mathcal{O}^{N\times N}}\|\Psi
U-\Phi\|_{0,\Omega}\}.
\end{eqnarray*}
In our analysis, we need the following lemma, whose proof is
referred to \cite{cancesM10}.
\begin{lemma}\label{lemma-decomposition}
If $\Phi\in\mathbb{Q}$, then $\Psi\in \mathbb{Q}^{\Phi}$ can be
represented by
\begin{eqnarray*}\label{eq-decomposition}
\Psi=\Phi+S(W)\Phi+W, 
\end{eqnarray*}
where $W\in\mathcal{T}_{\Phi}$ and $S(W)\in \mathcal{S}^{N\times N}$
satisfying
\begin{eqnarray}\label{property-SW}
|\mathcal{S}(W)|=|(I^{N\times N}-W^TW)^{1/2}-I^{N\times N}|\leq
\|W\|_{0,\Omega}^2\leq \|\Psi-\Phi\|_{0,\Omega}^2.
\end{eqnarray}
\end{lemma}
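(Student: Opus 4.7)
The plan is to use the direct sum decomposition $\mathcal{H}=\mathcal{S}_\Phi\oplus\mathcal{A}_\Phi\oplus\mathcal{T}_\Phi$ to put $\Psi-\Phi$ in a normal form, then exploit the best-approximation condition defining $\mathbb{Q}^\Phi$ to force the antisymmetric component to vanish, and finally identify $S(W)$ through a matrix square root coming from the orthogonality constraint $\Psi^T\Psi=I^{N\times N}$.

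First I would decompose $\Psi-\Phi=\Phi M+W$ with $M\in\mathbb{R}^{N\times N}$ and $W\in\mathcal{T}_\Phi$; applying $\Phi^T$ to both sides and using $\Phi^T\Phi=I^{N\times N}$, $\Phi^T W=0$ yields the explicit expressions $M=\Phi^T\Psi-I^{N\times N}$ and $W=\Psi-\Phi\Phi^T\Psi$. Expanding the constraint $\Psi^T\Psi=I^{N\times N}$ then gives the fundamental identity
\[
(I^{N\times N}+M)^T(I^{N\times N}+M)+W^TW=I^{N\times N}.
\]

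Next I would handle the minimization over $U\in\mathcal{O}^{N\times N}$. A direct computation (using the orthogonality constraints and the unitary invariance $\|WU\|_{0,\Omega}=\|W\|_{0,\Omega}$) reduces the minimization of $\|\Psi U-\Phi\|_{0,\Omega}^2$ to the maximization of $\mathrm{tr}(\Phi^T\Psi\, U)$ over orthogonal $U$. Via the polar decomposition $\Phi^T\Psi=QP$ with $Q$ orthogonal and $P$ symmetric positive semi-definite, the maximum is attained at $U=Q^{-1}$ with value $\mathrm{tr}(P)$. The assumption $\Psi\in\mathbb{Q}^\Phi$ says that $U=I^{N\times N}$ is already optimal, so $Q=I^{N\times N}$ and $\Phi^T\Psi=P$ is symmetric positive semi-definite. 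Equivalently, $M\in\mathcal{S}^{N\times N}$, which eliminates the $\mathcal{A}_\Phi$-component of $\Psi-\Phi$.

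Setting $S(W):=M$, the fundamental identity gives $(I^{N\times N}+S(W))^2=I^{N\times N}-W^TW$, and positivity of $I^{N\times N}+S(W)$ forces it to be the unique positive square root, so $S(W)=(I^{N\times N}-W^TW)^{1/2}-I^{N\times N}$. For the norm bound, I diagonalize $W^TW$ with eigenvalues $\mu_i\in[0,1]$ (the upper bound coming from the fundamental identity); the eigenvalues of $S(W)$ are then $\sqrt{1-\mu_i}-1=-\mu_i/(1+\sqrt{1-\mu_i})$, each of absolute value at most $\mu_i$, so
\[
|S(W)|^2\le\sum_i\mu_i^2\le\Big(\sum_i\mu_i\Big)^2=(\mathrm{tr}(W^TW))^2=\|W\|_{0,\Omega}^4,
\]
and the last inequality $\|W\|_{0,\Omega}\le\|\Psi-\Phi\|_{0,\Omega}$ is immediate from the $L^2$-orthogonality $(\Phi M,W)_{L^2}=\mathrm{tr}(M^T\Phi^TW)=0$, which makes $\|\Psi-\Phi\|_{0,\Omega}^2=|M|^2+\|W\|_{0,\Omega}^2$.

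The main obstacle is the middle step: establishing that $\Psi\in\mathbb{Q}^\Phi$ is equivalent to $\Phi^T\Psi$ being symmetric positive semi-definite. This requires care with the polar decomposition (including the possibly singular case in which $Q$ is not unique); once this characterization is in hand, everything else is a square-root identification plus an elementary eigenvalue inequality.
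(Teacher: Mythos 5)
The paper does not prove this lemma itself --- it refers the reader to \cite{cancesM10} --- and your argument is precisely the standard one from that reference: decompose $\Psi-\Phi$ along $\Phi\mathbb{R}^{N\times N}\oplus\mathcal{T}_{\Phi}$, use optimality of $U=I^{N\times N}$ in the definition of $\mathbb{Q}^{\Phi}$ to show $\Phi^T\Psi$ is symmetric positive semi-definite, and identify $I^{N\times N}+S(W)$ as the positive square root of $I^{N\times N}-W^TW$. All the steps check out, including your handling of the possibly singular polar factor and the eigenvalue estimate $|\sqrt{1-\mu}-1|\le\mu$, so the proposal is correct and essentially coincides with the proof the paper points to.
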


Before giving a discrete counterpart with Lemma
\ref{lemma-isomorphism}, we also  need to introduce two projections.
First, we define the projection $\tilde{\Pi}_n:\mathbb{Q}\rightarrow
\mathcal{H}_n\cap\mathbb{Q}$ such that
\begin{eqnarray*}\label{proof-a4}
\|\tilde{\Pi}_n\Phi-\Phi\|_{1,\Omega}=\min_{\Psi\in
\mathcal{H}_n\cap\mathbb{Q}} \|\Psi-\Phi\|_{1,\Omega}\quad \forall~
\Phi\in\mathbb{Q}.
\end{eqnarray*}
To project further into $X_{\Phi,n}$, we then define 
$\Pi_n:\mathcal{S}^{N\times N}\times 
\mathbb{Q}\rightarrow X_{\Phi,n}$ by
\begin{eqnarray*}
\Pi_n (\Lambda,\Phi)=(\Lambda,(\tilde{\Pi}_n\Phi) \tilde{U})\qquad
\forall~(\Lambda,\Phi)\in \mathcal{S}^{N\times N}\times \mathbb{Q},
\end{eqnarray*}
where
\begin{eqnarray*}
\tilde{U}= \arg \min_{U\in \mathcal{O}^{N\times
N}}\|(\tilde{\Pi}_n\Phi)U -\Phi\|_{0,\Omega}.
\end{eqnarray*}
From Lemma \ref{lemma-decomposition}, we see that $\Pi_n:\mathcal{S}^{N\times N}\times 
\mathbb{Q}\rightarrow X_{\Phi,n}$ is well-defined.
\begin{lemma}\label{lemma-h-isomorphism}
If Assumption {\bf A2} is satisfied, then there exists $n_0>1$ such
that $F'_{n,\Pi_n y_0}: X_{\Phi_0,n}\rightarrow X_{\Phi_0,n}^*$ is
an isomorphism for all $n\geq n_0$.
Moreover, there is a constant $M>0$ such that
\begin{eqnarray*}\label{eq-bound}
\|F'_{n,\Pi_n y_0}\hskip -0.2cm ^{-1}\|\leq M \quad\forall~n\geq
n_0.
\end{eqnarray*}
\end{lemma}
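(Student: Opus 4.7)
The plan is to reduce the claim to a uniform discrete inf-sup condition on the finite dimensional space $X_{\Phi_0,n}$ and then establish that condition by perturbing the continuous isomorphism from Lemma \ref{lemma-isomorphism}. Specifically, since $X_{\Phi_0,n}$ is finite dimensional, $F'_{n,\Pi_n y_0}$ is an isomorphism from $X_{\Phi_0,n}$ to $X_{\Phi_0,n}^*$ with $\|F'^{-1}_{n,\Pi_n y_0}\|\leq M$ if and only if
\begin{eqnarray*}
\inf_{y_{1,n}\in X_{\Phi_0,n}}\sup_{y_{2,n}\in X_{\Phi_0,n}}
\frac{\langle F'_{n,\Pi_n y_0} y_{1,n},y_{2,n}\rangle}
{\|y_{1,n}\|_{X_{\Phi_0}}\|y_{2,n}\|_{X_{\Phi_0}}} \geq \beta'>0
\end{eqnarray*}
for some $\beta'$ independent of $n\geq n_0$. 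All the work is in establishing this one inequality.

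First I would verify that $\Pi_n y_0\to y_0$ in $Y$. The definition of $\tilde{\Pi}_n$ together with the density \eqref{eq-approx} gives $\|\tilde{\Pi}_n \Phi_0-\Phi_0\|_{1,\Omega}\to 0$; Lemma \ref{lemma-decomposition} and \eqref{property-SW} control the unitary rotation $\tilde U$ entering the definition of $\Pi_n$, so $\|\Pi_n y_0-y_0\|_Y\to 0$. Invoking \eqref{eq-holder} from Lemma \ref{lemma-con-holder} gives $\|F'_{\Pi_n y_0}-F'_{y_0}\|\to 0$. Combined with Lemma \ref{lemma-isomorphism} and the standard fact that the set of isomorphisms is open in the operator norm, this yields that $F'_{\Pi_n y_0}:X_{\Phi_0}\to X_{\Phi_0}^*$ remains an isomorphism with $\|F'^{-1}_{\Pi_n y_0}\|$ bounded uniformly in $n\geq n_0$, i.e., a uniform continuous inf-sup condition of the form \eqref{inf-sup} holds with $y_0$ replaced by $\Pi_n y_0$.

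The heart of the proof is the transfer of this continuous inf-sup to the subspace $X_{\Phi_0,n}$. I would mimic the saddle-point decomposition used in the proof of Lemma \ref{lemma-isomorphism}, splitting $F'_{\Pi_n y_0}$ into the three pieces $a_{\Phi_0}$, $b_{\Phi_0}$, $c_{\Phi_0}$ (up to perturbation terms of size $\|\Pi_n y_0-y_0\|_Y^\alpha$ that are absorbed for $n$ large). The $b$- and $c$-components involve the finite dimensional factor $\mathcal{S}^{N\times N}$ and only require approximating $\Phi_0$ by an element of $\mathcal{H}_n$, so their discrete inf-sup constants $\kappa_b,\kappa_c$ are recovered uniformly as soon as $\|\tilde{\Pi}_n\Phi_0-\Phi_0\|_{1,\Omega}$ is sufficiently small. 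The nontrivial part, which I expect to be the main obstacle, is to establish a discrete inf-sup for $\mathcal{L}'_{\Phi_0}(\Lambda_0,\Phi_0)$ on $\mathcal{H}_n\cap\mathcal{T}_{\Phi_0}$ (or a suitable $\mathcal{O}(\|\Pi_n y_0-y_0\|_Y)$-perturbation of it inside $X_{\Phi_0,n}$), since $a_{\Phi_0}$ is only indefinite on $\mathcal{T}_{\Phi_0}$. However, the principal part $\tfrac{1}{2}(\nabla\cdot,\nabla\cdot)$ is coercive on $H^1_0(\Omega)$ and the remaining multiplicative contributions in \eqref{eq-L} are compact perturbations, so the continuous inf-sup \eqref{assumption-a2} together with the density \eqref{eq-approx} yields a uniform discrete inf-sup for $n$ large by a standard Schatz-type argument for Fredholm operators of index zero. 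Chaining the three discrete inf-sup conditions exactly as in the proof of Lemma \ref{lemma-isomorphism} then produces the required uniform constant $\beta'$, from which $M=1/\beta'$ (up to a harmless constant depending on $\bar C$) is obtained.
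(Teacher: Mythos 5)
Your proposal is correct in substance and rests on the same two pillars as the paper's proof: (i) $\Pi_n y_0\to y_0$ plus the H\"older estimate \eqref{eq-holder} to change the linearization point, and (ii) a compactness-plus-density argument to pass the continuous inf-sup condition \eqref{inf-sup} to the discrete space. The differences are in execution. You perturb first (showing $F'_{\Pi_n y_0}$ is a uniform isomorphism on the continuous space $X_{\Phi_0}$) and then discretize; the paper does it in the opposite order, first proving a uniform discrete inf-sup for $F'_{n,y_0}$ and only then perturbing to $F'_{n,\Pi_n y_0}$ — this ordering is immaterial. More substantively, for the transfer to the discrete space you propose to redo the $a_{\Phi_0}/b_{\Phi_0}/c_{\Phi_0}$ saddle-point decomposition of Lemma \ref{lemma-isomorphism} block by block, with a Schatz-type argument on the $\mathcal{T}_{\Phi_0}$-block, whereas the paper avoids any discrete splitting: it introduces a single Ritz-type projection $P_n^{\Phi_0}$ onto $\mathcal{H}_n\cap(\mathcal{S}_{\Phi_0}\oplus\mathcal{T}_{\Phi_0})$, uses Galerkin orthogonality to kill the principal part of $\langle F'_{y_0}y_1,y_2-P_ny_2\rangle$, and bounds the remaining lower-order terms by $\eta_n\|y_1\|\|y_2\|$ with $\eta_n\to 0$ from the Aubin--Nitsche estimate \eqref{L2-H1-projection} — which is exactly the Schatz mechanism, but applied globally to $F'_{y_0}$ as test functions are replaced by $P_ny_2$. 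The paper's global version buys you something concrete: since $\Phi_0\notin\mathcal{H}_n$ in general, $\mathcal{H}_n\cap(\mathcal{S}_{\Phi_0}\oplus\mathcal{T}_{\Phi_0})$ does \emph{not} split as $(\mathcal{H}_n\cap\mathcal{S}_{\Phi_0})\oplus(\mathcal{H}_n\cap\mathcal{T}_{\Phi_0})$ (the first factor is typically trivial), so your block-by-block recovery of $\kappa_b,\kappa_c$ and of the discrete inf-sup for $a_{\Phi_0}$ on ``$\mathcal{H}_n\cap\mathcal{T}_{\Phi_0}$'' requires replacing each block by an $\mathcal{O}(\|\tilde\Pi_n\Phi_0-\Phi_0\|_{1,\Omega})$-perturbed version and tracking the cross terms; you flag this but do not carry it out, and it is the one place where your write-up would need real additional work to be complete. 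With that caveat, your argument goes through and yields the same uniform bound $M\sim\beta^{-1}$.
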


\begin{proof}
We first prove that
\begin{eqnarray}\label{pi-n-convergence}
\lim_{n\rightarrow\infty}\|\Pi_n y-y\|_{X_{\Phi}}=0 \qquad
\forall~y\equiv(\Lambda,\Phi)\in \mathcal{S}^{N\times N}\times
\mathbb{Q}.
\end{eqnarray}
Using the fact that $\Phi\in \mathbb{Q}$ and
$(\tilde{\Pi}_n\Phi)\tilde{U}\in \mathbb{Q}^{\Phi}$, we have
\begin{eqnarray*}
|\tilde{U}-I| =
\|(\tilde{\Pi}_n\Phi)\tilde{U}-\tilde{\Pi}_n\Phi\|_{0,\Omega} \leq
\|(\tilde{\Pi}_n\Phi)\tilde{U}-\Phi\|_{0,\Omega}
+\|\tilde{\Pi}_n\Phi-\Phi\|_{0,\Omega} \lesssim
\|\tilde{\Pi}_n\Phi-\Phi\|_{1,\Omega},
\end{eqnarray*}
which implies
\begin{eqnarray}\label{proof-ad} \nonumber
\|(\tilde{\Pi}_n\Phi)\tilde{U}-\Phi\|_{1,\Omega} &\leq&
\|\tilde{\Pi}_n\Phi-\Phi\|_{1,\Omega}
+\|(\tilde{\Pi}_n\Phi)\tilde{U}-\tilde{\Pi}_n\Phi\|_{1,\Omega} \\
\nonumber &\leq& \|\tilde{\Pi}_n\Phi-\Phi\|_{1,\Omega}+|\tilde{U}-I|
\cdot\|\tilde{\Pi}_n\Phi\|_{1,\Omega} \\
&\lesssim& \|\tilde{\Pi}_n\Phi-\Phi\|_{1,\Omega}.
\end{eqnarray}
Let
$\Phi^n\equiv(\phi_1^n,\phi_2^n,\cdots,\phi^n_N)=\displaystyle\arg\min_{\Psi\in\mathcal{H}_n}
\|\Psi-\Phi\|_{1,\Omega}$, we may estimate
$\|\tilde{\Pi}_n\Phi-\Phi\|_{1,\Omega}$ as follows:
\begin{eqnarray*}
\|\tilde{\Pi}_n\Phi-\Phi\|_{1,\Omega} &\leq& \sum_{i=1}^N
\|\frac{Q_n\phi^n_i}{\|Q_n\phi^n_i\|_{0,\Omega}}-\phi_i\|_{1,\Omega} \\
&\leq& \sum_{i=1}^N (\|Q_n\phi^n_i-\phi_i\|_{1,\Omega}
+\|\frac{Q_n\phi^n_i}{\|Q_n\phi^n_i\|_{0,\Omega}}-Q_n\phi^n_i\|_{1,\Omega}) \\
&\leq&
\sum_{i=1}^N(1+\frac{\|Q_n\phi^n_i\|_{1,\Omega}}{\|Q_n\phi^n_i\|_{0,\Omega}})
\|\phi_i-Q_n\phi^n_i\|_{1,\Omega},
\end{eqnarray*}
where $Q_n$ is the Gram-Schmidt orthogonal operator:
$$
Q_n\phi^n_i=\phi^n_i-\displaystyle\sum_{j=1}^{i-1}
\frac{(Q_n\phi^n_j,\phi^n_i)}{(Q_n\phi^n_j,Q_n\phi^n_j)}Q_n\phi_j^n
\quad\quad i=1,\cdots,N.
$$
Note that
\begin{eqnarray*}
\|\phi_i-Q_n\phi^n_i\|_{1,\Omega}
&\leq&\|\phi_i^n-\phi_i\|_{1,\Omega} +\sum_{j=1}^{i-1}
\frac{\|Q_n\phi_j^n\|_{1,\Omega}}{\|Q_n\phi_j^n\|_{0,\Omega}^2}
((Q_n\phi^n_j,\phi^n_i-\phi_i) + (Q_n\phi^n_j-\phi_j,\phi_i)) \\
&\leq& (1+\sum_{j=1}^{i-1}
\frac{\|Q_n\phi_j^n\|_{1,\Omega}}{\|Q_n\phi_j^n\|_{0,\Omega}})
\|\phi_i-\phi_i^n\|_{1,\Omega} + \sum_{j=1}^{i-1}
\frac{\|Q_n\phi_j^n\|_{1,\Omega}}{\|Q_n\phi_j^n\|_{0,\Omega}^2}
\|\phi_j-Q_n\phi^n_j\|_{1,\Omega},
\end{eqnarray*}
we conclude
\begin{eqnarray}\label{proof-a3}
\|\tilde{\Pi}_n\Phi-\Phi\|_{1,\Omega}\lesssim
\|\Phi^n-\Phi\|_{1,\Omega}
=\inf_{\Psi\in\mathcal{H}_n}\|\Psi-\Phi\|_{1,\Omega}.
\end{eqnarray}
Using \eqref{proof-ad}, \eqref{proof-a3} and the definition of
$\Pi_n$, we arrive at
\begin{eqnarray}\label{pi-n-convergence--}
\|\Pi_n y-y\|_{X_{\Phi}} \lesssim
\inf_{\Psi\in\mathcal{H}_n}\|\Psi-\Phi\|_{1,\Omega},
\end{eqnarray}
which together with \eqref{eq-approx} leads to
\eqref{pi-n-convergence}.

We then show the invertibility of
$F'_{n,y_0}:X_{\Phi_0,n}\rightarrow X_{\Phi_0,n}^*$. We obtain from
\eqref{inf-sup} that
\begin{eqnarray*}
\sup_{y_{2}\in X_{\Phi_0}}\frac{\langle F'_{y_0} y_{1},y_{2}
\rangle}{\|y_{1}\|_{X_{\Phi_0}}\|y_{2}\|_{X_{\Phi_0}}} \geq \beta
\quad\forall~y_{1}\in X_{\Phi_0,n}.
\end{eqnarray*}

Let $P_n^{\Phi_0}: \mathcal{S}_{\Phi_0}\cap \mathcal{T}_{\Phi_0}\to
\mathcal{H}_n\cap(\mathcal{S}_{\Phi_0}\cap \mathcal{T}_{\Phi_0})$ be
a projection operator satisfying
\begin{eqnarray*}
\big( \nabla \Phi_1, \nabla (\Phi_2-P_n^{\Phi_0}
\Phi_2)\big)=0\qquad
\forall~\Phi_1\in\mathcal{H}_n\cap(\mathcal{S}_{\Phi_0}\cap
\mathcal{T}_{\Phi_0}).
\end{eqnarray*}
Set
$$\eta_n=\sup_{\Psi\in \mathcal{S}_{\Phi_0}\cap \mathcal{T}_{\Phi_0},\|\Psi\|_{1,\Omega}\le 1}\|\Psi-P_n^{\Phi_0} \Psi\|_{0,\Omega},$$
we have (see, e.g., \cite{zhou10})
\begin{eqnarray}\label{L2-H1-projection}
\|\Psi-P_n^{\Phi_0} \Psi\|_{0,\Omega}\lesssim \eta_n
\|\Psi\|_{1,\Omega} \quad\forall~\Psi\in \mathcal{S}_{\Phi_0}\cap
\mathcal{T}_{\Phi_0}\quad \textnormal{with} \quad \lim_{n\to
\infty}\eta_n=0.
\end{eqnarray}
Let $P_n=(I,P_n^{\Phi_0})$, we obtain from definition
\eqref{eq-F-derivative} and \eqref{L2-H1-projection} that
\begin{eqnarray*}
\langle F'_{y_0} y_1, P_n y_2\rangle &=&\langle F'_{y_0} y_1,
y_2\rangle -
\langle F'_{y_0} y_1,y_2-P_n y_2\rangle\nonumber\\
&=& \langle F'_{y_0} y_1, y_2\rangle + \frac{1}{2} ( \nabla \Phi_1,
\nabla (\Phi_2-P_n^{\Phi_0} \Phi_2)) - \langle F'_{y_0} y_1, y_2-P_n
y_2\rangle\nonumber\\
&\geq & \langle F'_{y_0} y_1, y_2\rangle - c
\|y_1\|_{X_{\Phi_0}}\|y_2-P_ny_2\|_{0,\Omega}\nonumber\\
&\geq & \langle F'_{y_0} y_1, y_2\rangle - c\eta_n
\|y_1\|_{X_{\Phi_0}}\|y_2\|_{X_{\Phi_0}},
\end{eqnarray*}
which  implies that there exists $\tilde{n}$ such that for all
$n\ge\tilde{n}$, there holds
\begin{eqnarray*}
\sup_{y_{2}\in X_{\Phi_0,n}}\frac{\langle F'_{y_0} y_{1},y_{2}
\rangle}{\|y_{1}\|_{X_{\Phi_0}}\|y_{2}\|_{X_{\Phi_0}}} \geq
\frac{\beta}{2} \quad\forall~y_{1}\in X_{\Phi_0,n},
\end{eqnarray*}
or equivalently
\begin{eqnarray*}
\inf_{y_{1}\in X_{\Phi_0,n}}\sup_{y_{2}\in
X_{\Phi_0,n}}\frac{\langle F'_{y_0} y_{1},y_{2}
\rangle}{\|y_{1}\|_{X_{\Phi_0}}\|y_{2}\|_{X_{\Phi_0}}} \geq
\frac{\beta}{2}.
\end{eqnarray*}
Thus $F'_{n,y_0}$ is an isomorphism from $X_{\Phi_0,n}$ to
$X_{\Phi_0,n}^*$ satisfying
\begin{eqnarray*}\label{eq-bound-2}
\|F'_{n,y_0}\hskip -0.2cm ^{-1}\|\leq 2\beta^{-1} \quad\forall~n\geq
\tilde{n}.
\end{eqnarray*}
Note that $F'_n$ satisfies the following discrete H\"{o}lder
condition
\begin{eqnarray*}\label{dis-F-conti}
\|F'_{n,y_0}-F'_{n,\Pi_n y_0}\| \lesssim \|y_0-\Pi_n
y_0\|^{\alpha}_{X_{\Phi_0}}+\|y_0-\Pi_n y_0\|^{2}_{X_{\Phi_0}}.
\end{eqnarray*}
It follows from \eqref{pi-n-convergence} that there exists
$n_0>\tilde{n}$ such that the inf-sup constant of $F'_{n,\Pi_n y_0}$
is uniformly away from zero for all $n\geq n_0$.
This completes the proof.
\end{proof}

\begin{theorem}\label{unique}
 If Assumption {\bf  A2} is satisfied, then
there exist  $\delta>0$, $n_1> n_0$ such that
\eqref{problem-eigen-dis} has a unique local solution
$y_n=(\Lambda_n,\Phi_n)\in X_{\Phi_0,n}\cap B_{\delta}(y_0)$ for all
$n\geq n_1$.
\end{theorem}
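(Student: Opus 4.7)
The plan is to invoke the Banach fixed-point theorem for a Newton-type map on the slice $X_{\Phi_0,n}$, using $\Pi_n y_0$ as the reference point. Define
\[
T_n : X_{\Phi_0,n}\to X_{\Phi_0,n}, \qquad T_n(y) = y - \bigl(F'_{n,\Pi_n y_0}\bigr)^{-1} F_n(y),
\]
which is well defined for $n\ge n_0$ by Lemma \ref{lemma-h-isomorphism}. Fixed points of $T_n$ are precisely the zeros of $F_n$ in $X_{\Phi_0,n}$, i.e.\ the required solutions of \eqref{problem-eigen-dis}.

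\textbf{Residual at $\Pi_n y_0$.} Since $F(y_0)=0$ and $F_n$ is the restriction of $F$ to $Y_n$, the Lipschitz bound \eqref{eq-lipschitz} in Lemma \ref{lemma-con-holder} yields
\[
\|F_n(\Pi_n y_0)\|_{X_{\Phi_0,n}^*} \le \|F(\Pi_n y_0)-F(y_0)\|_{Y^*} \le C_F\,\|\Pi_n y_0 - y_0\|_Y,
\]
and the right-hand side tends to $0$ as $n\to\infty$ by \eqref{pi-n-convergence}, itself obtained from \eqref{pi-n-convergence--} and \eqref{eq-approx}.

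\textbf{Contraction on a small ball.} For $y,\tilde y \in \overline{B_\rho(\Pi_n y_0)}\cap X_{\Phi_0,n}$, writing
\[
T_n(y)-T_n(\tilde y) = \bigl(F'_{n,\Pi_n y_0}\bigr)^{-1}\!\int_0^1\!\bigl(F'_{n,\Pi_n y_0}-F'_{n,\tilde y+t(y-\tilde y)}\bigr)(y-\tilde y)\,dt
\]
and combining the uniform inverse bound $\|(F'_{n,\Pi_n y_0})^{-1}\|\le M$ from Lemma \ref{lemma-h-isomorphism} with the Hölder estimate \eqref{eq-holder} gives
\[
\|T_n(y)-T_n(\tilde y)\|_{X_{\Phi_0}} \le M C_F'\bigl(\rho^{\alpha}+\rho^{2}\bigr)\|y-\tilde y\|_{X_{\Phi_0}}.
\]
Choose $\rho>0$ (independent of $n$) so that $MC_F'(\rho^\alpha+\rho^2)\le 1/2$. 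Specializing $\tilde y=\Pi_n y_0$ and using the residual bound,
\[
\|T_n(y)-\Pi_n y_0\|\le \tfrac12\|y-\Pi_n y_0\|+MC_F\|\Pi_n y_0-y_0\|_Y,
\]
so enlarging $n$ to ensure $MC_F\|\Pi_n y_0-y_0\|_Y \le \rho/2$ makes $T_n$ a self-map of $\overline{B_\rho(\Pi_n y_0)}\cap X_{\Phi_0,n}$ and a $\tfrac12$-contraction.

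\textbf{Conclusion and main obstacle.} The Banach fixed-point theorem yields a unique fixed point $y_n\in\overline{B_\rho(\Pi_n y_0)}\cap X_{\Phi_0,n}$, and the standard Newton bound $\|y_n-\Pi_n y_0\|\le 2MC_F\|\Pi_n y_0-y_0\|_Y$ with the triangle inequality locates $y_n$ in $B_\delta(y_0)$ and transfers uniqueness to $B_\delta(y_0)\cap X_{\Phi_0,n}$ once $n_1$ is enlarged so that both $(2MC_F+1)\|\Pi_n y_0-y_0\|_Y<\delta$ and $\delta+\|\Pi_n y_0-y_0\|_Y\le\rho$ hold for $n\ge n_1$; both are feasible because $\|\Pi_n y_0-y_0\|_Y\to 0$. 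The central technical point is securing all constants uniformly in $n$: the uniform inverse bound in Lemma \ref{lemma-h-isomorphism} and the $n$-independent Hölder constant in \eqref{eq-holder} — which together allow $\rho$ to be chosen independently of $n$ — both rely essentially on Assumption \textbf{A2}.
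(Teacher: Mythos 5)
Your proof is correct and takes essentially the same route as the paper: the paper's map $\mathcal{N}$, defined via $F'_{n,\Pi_n y_0}(\mathcal{N}(x)-\Pi_n y_0)=-F_n(\Pi_n y_0)-(x-\Pi_n y_0)\int_0^1(F'_{n,\Pi_n y_0+t(x-\Pi_n y_0)}-F'_{n,\Pi_n y_0})\,dt$, is algebraically identical to your $T_n(y)=y-(F'_{n,\Pi_n y_0})^{-1}F_n(y)$, and the residual, contraction, and self-mapping estimates coincide with the paper's, resting on the same two ingredients (the uniform inverse bound of Lemma \ref{lemma-h-isomorphism} and the H\"older estimate \eqref{eq-holder}). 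Your final step transferring existence and uniqueness from $\overline{B_\rho(\Pi_n y_0)}$ to $B_\delta(y_0)$ is in fact spelled out more carefully than in the paper.
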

\begin{proof}
The idea is to construct a
contractive mapping whose fixed point is  $y_n$. We rewrite
\eqref{problem-abstract-dis} as
$$
F_n(y_n) - F_n(\Pi_n y_0) = -F_n(\Pi_n y_0).
$$
Using \eqref{eq-lipschitz}, we have
\begin{eqnarray*}\label{proof-11}\nonumber
\|F_n(\Pi_n y_0)\|_{X_{\Phi_0,n}^*} &=& \|F(\Pi_n y_0)|_{X_{\Phi_0,n}}-F(y_0)|_{X_{\Phi_0,n}}\|_{X_{\Phi_0,n}^*}\\
&\leq &\|F(\Pi_n y_0)-F(y_0)\|_{X_{\Phi_0}^*}~\lesssim~\|y_0-\Pi_n
y_0\|_{X_{\Phi_0}}.
\end{eqnarray*}
From Lemma \ref{lemma-h-isomorphism}, we may  define the map
$\mathcal{N} :B_R(\Pi_n y_0)\cap X_{\Phi_0,n} \rightarrow
X_{\Phi_0,n}$ by
\begin{eqnarray*}
F'_{n,\Pi_n y_0}(\mathcal{N}(x)-\Pi_n y_0)=-F_n(\Pi_n y_0) -
(x-\Pi_n y_0) \int_0^1\big(F'_{n,\Pi_n y_0+t(x-\Pi_n
y_0)}-F'_{n,\Pi_n y_0}\big)dt
\end{eqnarray*}
when $n\geq n_0$.

We will show that $\mathcal{N}$ is a contraction from $B_R(\Pi_n
y_0)\cap X_{\Phi_0,n}$ into $B_R(\Pi_n y_0)\cap X_{\Phi_0,n}$ if $R$
is chosen sufficiently small and $n$ is large enough.

First, we prove that $\mathcal{N}$ maps $B_R(\Pi_n y_0)\cap
X_{\Phi_0,n}$ to $B_R(\Pi_n y_0)\cap X_{\Phi_0,n}$ for sufficiently
small $R$. Note that $F'_{n,\Pi_n y_0}$ is an isomorphism on
$X_{\Phi_0,n}$ if $n$ is sufficiently large. For each $x\in
B_R(\Pi_n y_0)$, we have $\mathcal{N}(x)-\Pi_n y_0 \in X_{\Phi_0,n}$
and
\begin{eqnarray*}
&& \|\mathcal{N}(x)-\Pi_n y_0\|_{X_{\Phi_0}} \\\nonumber
&\leq&M\big(\|F_n(\Pi_n
y_0)\|_{X_{\Phi_0,n}^*} + R\int_0^1 \|F'_{n,\Pi_n y_0+t(x-\Pi_n y_0)}-F'_{n,\Pi_n y_0}\|dt\big)\\
&\leq & CM\big(\|\Pi_n
y_0-y_0\|_{X_{\Phi_0}}+R(R^{\alpha}+R^2)\big).
\end{eqnarray*}
Since $CM(\|\Pi_n y_0-y_0\|_{X_{\Phi_0}}+R^{1+\alpha}+R^3)$ can be
estimated by $R$ when $R$ is sufficiently small and $n$ is
sufficiently large, we have that $\mathcal{N}(x) \in B_R(\Pi_n
y_0)$. It is clear that $R$ can be chosen independently of $n$.

Next, we show that $\mathcal{N}$ is a contraction on $B_R(\Pi_n
y_0)\cap X_{\Phi_0,n}$. If $x_1, x_2 \in B_R(\Pi_n y_0)\cap
X_{\Phi_0,n}$, then
\begin{eqnarray*}
F'_{n,\Pi_n y_0}(\mathcal{N}(x_1)-\mathcal{N}(x_2)) =
(x_1-x_2)\int_0^1\big(F'_{n,\Pi_n y_0}-F'_{n,x_1+t(x_2-x_1)}\big)dt.
\end{eqnarray*}
Thus, $\|\mathcal{N}(x_1)-\mathcal{N}(x_2)\|_{X_{\Phi_0}}$ can be
estimated as
\begin{eqnarray*}
&& \|\mathcal{N}(x_1)-\mathcal{N}(x_2)\|_{X_{\Phi_0}} \\&\leq &
M\|x_2-x_1\|_{X_{\Phi_0}} \int_0^1\big \|F'_{n,\Pi_n y_0}-F'_{n, x_1+t(x_2-x_1)}\big\|dt\\
&\leq & CM(R^{\alpha}+R^2)\|x_1-x_2\|_{X_{\Phi_0}}.
\end{eqnarray*}
 We obtain for
sufficiently small $R$ that $CM(R^{\alpha}+R^2)<1$  and hence
$\mathcal{N}$ is a contraction on $B_R(\Pi_n y_0)$.

We are now able to use Banach's Fixed Point Theorem to obtain the
existence and uniqueness of a fixed point $y_n$ of  map $\mathcal{N}
: B_R(\Pi_n y_0)\cap X_{\Phi_0,n} \rightarrow B_R(\Pi_n y_0)\cap
X_{\Phi_0,n}$, which is the solution of $F_n(y_n)=0$. This completes
the proof.
\end{proof}

\section{Numerical analysis}\label{sec-error-estimate}\setcounter{equation}{0}
In this section, we shall prove the convergence of finite
dimensional approximations and derive various error estimates under
different assumptions.

\subsection{Convergence}
\label{subsec-convergence} The purpose of this subsection is to
prove the convergence of the numerical ground state solutions, for
which we need to introduce the following distances between two sets.
We define the distance
 between two subsets $A,B\subset Y$
by
\begin{eqnarray*}\label{def-dis-set}
\mathcal{D}(A, B)=\sup_{(\Lambda,\Phi)\in A}\inf_{(\bbmu,\Psi)\in B}
(|\Lambda-\bbmu|+\|\Phi-\Psi\|_{1,\Omega})
\end{eqnarray*}
and the  distance between two sets $M,N\subset\mathcal{H}$ by
\begin{eqnarray*}\label{dis-H}
d_{\mathcal{H}}(M,N)=\sup_{\Phi\in M}\inf_{\Psi\in N}
\|\Phi-\Psi\|_{1,\Omega}.
\end{eqnarray*}
\begin{theorem}\label{theo-convergence}
There hold
\begin{eqnarray}\label{conv-phi}
\lim_{n\to\infty} \mathcal{D}(\Theta_{n}, \Theta)=0,
\end{eqnarray} \vskip -0.5cm
\begin{eqnarray}\label{conv-energy}
\lim_{n\to\infty}E_n=\min_{\Psi\in\mathbb{Q}}E(\Psi),
\end{eqnarray}
where $E_n=E(\Phi_n)$ for any $\Phi_n\in\mathcal {G}_n$.
\end{theorem}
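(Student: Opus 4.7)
The plan is to prove the energy convergence \eqref{conv-energy} first, then bootstrap it to strong $H^1$ convergence of ground state sequences, and finally deduce the set convergence \eqref{conv-phi} by a contradiction argument based on the uniform bound \eqref{eq-bounded}.

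For the upper bound on $E_n$, I fix any $\Phi^\star\in\mathcal{G}$ and use the density property \eqref{eq-approx} together with a Gram--Schmidt orthonormalization (as performed in the proof of Lemma \ref{lemma-h-isomorphism}) to construct $\tilde\Phi^n\in\mathcal{H}_n\cap\mathbb{Q}$ with $\|\tilde\Phi^n-\Phi^\star\|_{1,\Omega}\to 0$. Continuity of $E$ on bounded subsets of $\mathcal{H}$ then gives $E(\tilde\Phi^n)\to E(\Phi^\star)$, and minimality $E_n\le E(\tilde\Phi^n)$ yields $\limsup_n E_n \le E(\Phi^\star)=\min_{\mathbb{Q}} E$. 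For the matching lower bound I pick $\Phi_n\in\mathcal{G}_n$ and use \eqref{eq-bounded} to extract a subsequence $\Phi_{n_k}\rightharpoonup\Phi^\star$ weakly in $\mathcal{H}$ and strongly in $L^p(\Omega)^N$ for every $p\in[2,6)$ by Rellich--Kondrachov. Strong $L^2$ convergence passes the orthonormality constraint to the limit, so $\Phi^\star\in\mathbb{Q}$. Proposition \ref{proposition-lower-semi-continuous} combined with the upper bound forces $\min_{\mathbb{Q}}E\le E(\Phi^\star)\le \liminf_k E(\Phi_{n_k})\le \min_{\mathbb{Q}}E$, so every weakly convergent subsequence shares the limit $\min_{\mathbb{Q}}E$ and its weak limit lies in $\mathcal{G}$; a subsequence-of-subsequence argument then gives \eqref{conv-energy}.

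Next I bootstrap to strong $\mathcal{H}$-convergence of minimizers. Writing $E(\Phi)=\tfrac12\|\nabla\Phi\|_{0,\Omega}^2+J(\Phi)$ with $J$ collecting the $V_{loc}$, $V_{nl}$, Hartree, and $\mathcal{E}(\rho_\Phi)$ contributions, each ingredient of $J$ depends on $\Phi$ only through $L^p$-norms with $p\in[2,6)$. Invoking the $\mathscr{P}$-class hypothesis on $\mathcal{E}$, Young's inequality for the Hartree term, and H\"older/Sobolev estimates (essentially the bounds that appear in Lemma \ref{lemma-con-holder}), strong $L^p$ convergence of $\Phi_{n_k}$ implies $J(\Phi_{n_k})\to J(\Phi^\star)$. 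Together with $E(\Phi_{n_k})\to E(\Phi^\star)$ this forces $\|\nabla\Phi_{n_k}\|_{0,\Omega}\to\|\nabla\Phi^\star\|_{0,\Omega}$, and weak convergence plus convergence of norms yields strong convergence in $\mathcal{H}$. The formula \eqref{eq-Lambda-dis} together with continuity of $A_\Phi$ under strong $H^1$ convergence then gives $\Lambda_{n_k}\to\Lambda^\star$, where $(\Lambda^\star,\Phi^\star)\in\Theta$ is the Lagrange--multiplier pair attached to $\Phi^\star$.

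To conclude \eqref{conv-phi} I argue by contradiction: if $\mathcal{D}(\Theta_n,\Theta)\not\to 0$, then some subsequence $(\Lambda_{n_k},\Phi_{n_k})\in\Theta_{n_k}$ keeps distance at least $\varepsilon$ from $\Theta$, yet the compactness and strong-convergence steps above produce a sub-subsequence converging to an element of $\Theta$, a contradiction. The main obstacle is the weak-to-strong upgrade: one must verify that every non-kinetic term in the energy is continuous under weak $H^1$/strong $L^p$ convergence on bounded sequences, particularly for the exchange--correlation integrand, where only the $\mathscr{P}$-growth structure is available. This reliance on compact embeddings and on the structural growth hypothesis for $\mathcal{E}$ is also the reason none of Assumptions \textbf{A1}--\textbf{A3} enters the argument, in agreement with the remark preceding Section \ref{sec-error-estimate}.
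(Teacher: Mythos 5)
Your proposal is correct and follows essentially the same route as the paper: extraction of a weakly convergent subsequence via the uniform bound \eqref{eq-bounded}, passage of the orthonormality constraint to the limit by Rellich, weak lower semicontinuity (Proposition \ref{proposition-lower-semi-continuous}) combined with the minimizing-sequence property coming from \eqref{eq-approx} and a Gram--Schmidt correction, term-by-term convergence of the non-kinetic energy contributions to upgrade weak to strong $\mathcal{H}$-convergence, and finally convergence of the Lagrange multipliers from \eqref{eq-Lambda} and \eqref{eq-Lambda-dis}. The only difference is organizational (you separate the $\limsup$/$\liminf$ bounds and phrase \eqref{conv-phi} as a contradiction rather than a subsequence-of-subsequence argument), which does not change the substance.
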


\begin{proof}
Let $(\Lambda_n,\Phi_{n})\in\Theta_n$ for $n=1,2,\cdots$. Given any
subsequence $\{\Phi_{n_{k}}\}$ of $\{\Phi_{n}\}$ with $1\leq
n_1<n_2<\cdots<n_k<\cdots$, we obtain from the Banach-Alaoglu
Theorem and \eqref{eq-bounded} that there exist $\Phi\in
\mathcal{H}$ and a weakly convergent subsequence $\{\Phi_{n_{k_j}}\}
\subset\{\Phi_{n_k}\}$ such that
\begin{eqnarray}\label{proof-th1-1}
\Phi_{n_{k_j}}\rightharpoonup \Phi ~~\mbox{in}~\mathcal{H}.
\end{eqnarray}
Next we shall prove $\Phi\in \mathcal{G}$ and
\begin{eqnarray}\label{proof-th1-2}
\lim_{j\to\infty}\|\Phi-\Phi_{n_{k_j}}\|_{1,\Omega}=0,
\end{eqnarray}\vskip -0.5cm
\begin{eqnarray}\label{proof-th1-3}
\lim_{j\to\infty}E(\Phi_{n_{k_j}})=\min_{\Psi\in\mathbb{Q}}E(\Psi).
\end{eqnarray}

From (\ref{proof-th1-1}) and Proposition
\ref{proposition-lower-semi-continuous}, we have
\begin{eqnarray}\label{proof-th1-4}
\liminf_{j\to\infty} E(\Phi_{n_{k_j}})\geq E(\Phi).
\end{eqnarray}
Note that \eqref{eq-approx} implies that $\{\Phi_{n_{k_j}}\}$ is a
minimizing sequence for $E(\Psi)$ and the Rellich theorem shows
that $$\int_{\Omega}\phi_{i,{n_{k_j}}}\phi_{j,{n_{k_j}}}\rightarrow
\int_{\Omega}\phi_{i}\phi_{j}\quad j\to \infty.$$  Therefore
$\Phi\in \mathbb{Q}$ is a minimizer of $E(\Psi)$, which together
with (\ref{proof-th1-4}) leads to
\begin{eqnarray}\label{proof-th1-5}
\lim_{j\to\infty}E(\Phi_{n_{k_j}})=E(\Phi)
=\min_{\Psi\in\mathbb{Q}}E(\Psi).
\end{eqnarray}
This further implies (\ref{proof-th1-3}) and $\Phi\in \mathcal{G}$.

Since $H_0^1(\Omega)$ is compactly imbedded into $L^p(\Omega)$ for
$p\in[2,6)$, we have that $\phi_{i,n_{k_j}}\to \phi_i$ strongly in
$L^p(\Omega)$ as $j\rightarrow \infty$ for $i=1,2,\cdots,N$. This
indicates that $\{\rho_{\Phi_{n_{k_j}}}\}$ converges to
$\rho_{\Phi}$ strongly in $L^q(\Omega)$ for $q\in[1,3)$, from which
we obtain that
\begin{eqnarray*}\label{proof-5}
\lim_{j\to\infty}\int_{\Omega}V_{loc}(x)(\rho_{\Phi_{n_{k_j}}}(x)
-\rho_{\Phi}(x))dx=0,
\end{eqnarray*}\vskip -0.4cm
\begin{eqnarray*}\label{proof-6}
\lim_{j\to\infty}\int_{\Omega}\big(\mathcal{E}
(\rho_{\Phi_{n_{k_j}}})-\mathcal{E}(\rho_{\Phi}(x))\big)dx=0,
\end{eqnarray*}
and
\begin{eqnarray}\label{proof-7}
\lim_{j\to\infty}D(\rho_{\Phi_{n_{k_j}}},\rho_{\Phi_{n_{k_j}}}) =
D(\rho_{\Phi},\rho_{\Phi}).
\end{eqnarray}

Consequently, we can get from \eqref{proof-th1-5}  to
\eqref{proof-7} that each term of $E(\cdot)$ converges and in
particular
\begin{eqnarray*}\label{con1}
\lim_{j\to\infty}\sum_{i=1}^N\|\nabla\phi_{i,n_{k_j}}\|^2_{0,\Omega}=
\sum_{i=1}^N\|\nabla\phi_{i}\|^2_{0,\Omega}.
\end{eqnarray*}
Using \eqref{proof-th1-1} and the fact that $\mathcal{H}$ is a
Hilbert space under norm
$\displaystyle\left(\sum_{i=1}^N\|\nabla\phi_i\|^2_{0,\Omega}\right)^{1/2}$,
we obtain (\ref{proof-th1-2}).
If $(\Lambda,\Phi)$ solves \eqref{problem-eigen-compact-L}, then
\begin{eqnarray*}\label{proof-th1-9}
\lim_{j\to\infty}|\Lambda-\Lambda_{n_{k_j}}|=0
\end{eqnarray*}
is a direct consequence of \eqref{eq-Lambda}, \eqref{eq-Lambda-dis}
and \eqref{proof-th1-2}. Hence we arrive at \eqref{conv-phi}. This
completes the proof.
\end{proof}

\begin{remark}
Theorem \ref{theo-convergence} states that all the limit points of
finite dimensional approximations are  ground state solutions. We
note that \cite{sur10}  gave the convergence of ground state energy
approximations only while we provide further convergence  of
approximations of both eigenvalues and eigenfunctions.
\end{remark}

\subsection{Error estimates for the energy approximation}
\label{sec-priori-error-energy} We shall  derive the quadratic
convergence rate of ground state energy approximations, which is a
generalization and improvement of \cite{cancesM10,sur10}.
\begin{theorem}
  \label{thm:error-energy}
   Let $E$ be the ground state
  energy of \eqref{problem-min} and $E_n$ be the ground state energy
  of \eqref{problem-min-dis}, namely, $E=E(\Phi)$ for all
  $\Phi\in\mathcal{G}$ and $E_n=E(\Phi_n)$ for all
  $\Phi_n\in\mathcal{G}_n$.   If Assumption {\bf A1} holds,
  then
  \begin{eqnarray}\label{error-energy}
    |E-E_n|\lesssim d^2_{\mathcal{H}}(\mathcal{G},\mathcal{H}_n).
  \end{eqnarray}
\end{theorem}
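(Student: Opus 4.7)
The plan is to compare $E_n$ with $E$ by projecting a continuous ground state $\Phi\in\mathcal{G}$ into $\mathcal{H}_n\cap\mathbb{Q}$, applying Taylor's formula to $E$ around $\Phi$, and exploiting the fact that although $\delta = \tilde{\Phi}_n - \Phi$ is not a priori tangent to $\mathbb{Q}$, the first-order term nevertheless turns out to be quadratic in $\delta$ thanks to the orthonormality constraints and the symmetry of the Lagrange multiplier matrix.

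Since $\mathcal{H}_n\cap\mathbb{Q}\subset\mathbb{Q}$, the lower bound $E_n\geq E$ is immediate, so only an upper bound on $E_n-E$ needs attention. Fix $\Phi\in\mathcal{G}$ with Lagrange multiplier $\Lambda=(\lambda_{ij})_{i,j=1}^N$ given by \eqref{eq-Lambda}. The self-adjointness of $A_{\Phi}$ on $L^2(\Omega)$ gives $\lambda_{ij}=\lambda_{ji}$, hence $\Lambda\in\mathcal{S}^{N\times N}$. Set $\tilde{\Phi}_n=\tilde{\Pi}_n\Phi\in\mathcal{H}_n\cap\mathbb{Q}$; estimate \eqref{proof-a3} yields
\begin{eqnarray*}
\|\tilde{\Phi}_n-\Phi\|_{1,\Omega}\lesssim \inf_{\Psi\in\mathcal{H}_n}\|\Psi-\Phi\|_{1,\Omega}\leq d_{\mathcal{H}}(\mathcal{G},\mathcal{H}_n).
\end{eqnarray*}
Since $\tilde{\Phi}_n$ is admissible for the discrete minimization, $E_n\leq E(\tilde{\Phi}_n)$, and Taylor expansion gives
\begin{eqnarray*}
E(\tilde{\Phi}_n)-E(\Phi)=\langle E'(\Phi),\delta\rangle +\int_0^1 (1-t)\,E''(\Phi+t\delta)(\delta,\delta)\,dt,\qquad \delta=\tilde{\Phi}_n-\Phi.
\end{eqnarray*}

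For the linear term, the Euler-Lagrange equation \eqref{problem-eigen-compact-L} and the identity $\langle E'(\Phi),\delta\rangle = 2\sum_{i}(A_{\Phi}\phi_i,\delta_i)$ give $\langle E'(\Phi),\delta\rangle=2\sum_{i,j}\lambda_{ij}(\phi_j,\delta_i)$. Expanding $(\phi_j-\tilde{\phi}_{n,j},\phi_i-\tilde{\phi}_{n,i})$ by means of $\Phi^T\Phi=\tilde{\Phi}_n^T\tilde{\Phi}_n=I^{N\times N}$ and symmetrizing via $\lambda_{ij}=\lambda_{ji}$ reduce the expression to
\begin{eqnarray*}
\langle E'(\Phi),\delta\rangle = -\sum_{i,j=1}^N \lambda_{ij}(\delta_j,\delta_i),
\end{eqnarray*}
which, thanks to the uniform bound \eqref{eq-bounded1}, is controlled by $|\Lambda|\,\|\delta\|_{0,\Omega}^2\lesssim \|\delta\|_{1,\Omega}^2$. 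For the quadratic term, under Assumption \textbf{A1} the bounds \eqref{proof-44}--\eqref{proof-46} combined with the computation of $E''$ underlying \eqref{eq-L} yield $|E''(\Xi)(\Psi,\Gamma)|\lesssim \|\Psi\|_{1,\Omega}\|\Gamma\|_{1,\Omega}$ uniformly for $\Xi$ in a bounded set of $\mathcal{H}$; since $\Phi+t\delta$ remains in such a set for $t\in[0,1]$, the integral is bounded by $\|\delta\|_{1,\Omega}^2$. Combining the two estimates,
\begin{eqnarray*}
0\leq E_n-E\leq E(\tilde{\Phi}_n)-E(\Phi)\lesssim \|\delta\|_{1,\Omega}^2\lesssim d_{\mathcal{H}}^2(\mathcal{G},\mathcal{H}_n),
\end{eqnarray*}
which is \eqref{error-energy}.

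The main obstacle is the recognition that the linear term $\langle E'(\Phi),\delta\rangle$ is already of second order: this is not automatic from stationarity on $\mathbb{Q}$ because $\delta$ need not be tangent, and the quadratic character is recovered only by combining the constraint $\tilde{\Phi}_n^T\tilde{\Phi}_n=I^{N\times N}$ with the symmetry of $\Lambda$. The verification of the uniform $H^1$-continuity of $E''$ under the mild polynomial growth of Assumption \textbf{A1} is technically routine but requires the same Hölder--Sobolev interpolation already used in Lemma \ref{lemma-con-holder}.
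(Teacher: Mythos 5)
Your proof is correct and follows essentially the same route as the paper: bound $0\le E_n-E\le E(\tilde{\Pi}_n\Phi)-E(\Phi)$, Taylor expand, show the first-order term is quadratic in $\delta$ via the orthonormality constraints, and bound the second-order term by the $H^1$-continuity of $E''$ under Assumption \textbf{A1}, finishing with \eqref{proof-a3}. The only (cosmetic) difference is in the linear term: the paper orthogonally diagonalizes $\Lambda$ and uses the normalization of the individual orbitals, whereas you keep $\Lambda$ intact and use its symmetry together with $\Phi^T\Phi=\tilde{\Phi}_n^T\tilde{\Phi}_n=I$ to get $\langle E'(\Phi),\delta\rangle=-\sum_{i,j}\lambda_{ij}(\delta_j,\delta_i)$ --- an algebraically equivalent manipulation.
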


\begin{proof}
We see from the definition of ground state energies $E$ and $E_n$
that
$$
0\leq E_n-E\leq E(\Psi)-E
\quad\forall~\Psi\in\mathcal{H}_n\cap\mathbb{Q}.
$$

Following \cite{cancesM10,schneider09}, if Assumption {\bf A1}
holds, we obtain from the Taylor expansion that for any
$\Psi\in\mathbb{Q}$, there holds
\begin{eqnarray}\label{proof-51}
E(\Psi)-E(\Phi)=(E'(\Phi),\Psi-\Phi) + \frac{1}{2}\langle
E''(\xi)(\Psi-\Phi),\Psi-\Phi \rangle,
\end{eqnarray}
where $\xi=\Phi+\delta(\Psi-\Phi)$ with $\delta\in [0,1]$.
Since $\Phi$ is a ground state solution, we get from
\eqref{problem-eigen-compact-L} that
\begin{eqnarray*}
(E'(\Phi),\Psi-\Phi)=2(\Phi\Lambda,\Psi-\Phi)=2(\Phi UU^T\Lambda
U,\Psi U-\Phi U),
\end{eqnarray*}
where the orthogonal transform $U$ diagonalizes the Lagrange
multiplier $\Lambda$ by
$$
U^T\Lambda U={\rm
diag}\{\tilde{\lambda}_1,\cdots,\tilde{\lambda}_N\}.
$$
Denote $\tilde{\Phi}=\Phi U$ and $\tilde{\Psi}=\Psi U$, we have
\begin{eqnarray}\label{proof-52}\nonumber
(E'(\Phi),\Psi-\Phi)&=&2\sum_{i=1}^N\tilde{\lambda}_i
\int_{\Omega}\tilde{\phi}_i(\tilde{\psi}_i-\tilde{\phi}_i)~\lesssim~
\sum_{i=1}^N\|\tilde{\phi}_i-\tilde{\psi}_i\|^2_{0,\Omega}\\
&\lesssim& \|\tilde{\Phi}-\tilde{\Psi}\|^2_{1,\Omega} ~=~
\|\Phi-\Psi\|^2_{1,\Omega}.
\end{eqnarray}
It is observed by a simple calculation that
$$\langle E''(\xi)\Psi,\Gamma \rangle=2\sum_{i=1}^N(A_{\xi}\psi_i,\gamma_i)
+4\sum_{i,j=1}^ND(\xi_i \psi_i,\xi_j \gamma_j)
+4\sum_{i,j=1}^N\int_{\Omega}\mathcal{E}''(\rho_{\xi})\xi_i\psi_i\xi_j\gamma_j$$
and hence
\begin{eqnarray}\label{proof-53}
\langle E''(\xi)(\Psi-\Phi),\Psi-\Phi \rangle\lesssim \|\Psi-\Phi\|^2_{1,\Omega},
\end{eqnarray}
where the hidden constant  depends on the $\mathcal{H}$-norm of $\Psi$.

Taking \eqref{proof-51}, \eqref{proof-52} and \eqref{proof-53} into account,  we have proved that for $\Phi\in\mathcal{G}$ there holds
\begin{eqnarray*}\label{eq-error-energy}
E(\Psi)-E(\Phi)\lesssim\|\Phi-\Psi\|^2_{1,\Omega}\quad\forall~\Psi\in\mathcal{H}_n\cap\mathbb{Q},
\end{eqnarray*}
which together with the definition of $\tilde{\Pi}_n$ and \eqref{proof-a3} implies that $\tilde{\Pi}_n\Phi\in \mathcal{H}_n\cap \mathbb{Q}$ and
\begin{eqnarray*}\label{eq-error-energy}
0\leq E_n-E\leq E(\tilde{\Pi}_n\Phi)-E(\Phi)\lesssim\|\tilde{\Pi}_n\Phi-\Phi\|^2_{1,\Omega}\lesssim d^2_{\mathcal{H}}(\mathcal{G},\mathcal{H}_n),
\end{eqnarray*}
where the hidden constant, by using \eqref{eq-bounded}, is only dependent on the problem. This completes the proof.
\end{proof}

\subsection{Error estimates for ground state solutions}
\label{sec-priori-error} In this subsection, we shall derive the
 a priori error estimates for finite dimensional
approximations of Kohn-Sham equations under Assumptions {\bf A2} and {\bf A3}. Note that $y_0\equiv(\Lambda_0,\Phi_0)$ is a solution of
\eqref{problem-eigen-compact-L} satisfying \eqref{assumption-a2}.

We define bilinear form $a'(\Phi_0;\cdot,\cdot)$ by
\begin{eqnarray*}
a'(\Phi_0;\Psi,\Gamma)=\langle
 \mathcal{L}_{\Phi_0}'(\Lambda_0,\Phi_0)\Psi,\Gamma\rangle \quad\forall~ \Psi,
\Gamma\in \mathcal{H}.
\end{eqnarray*}
Obviously, $a'(\Phi_0;\cdot,\cdot)$ is continuous on
$\mathcal{H}\times \mathcal{H}$.

 Now we shall  introduce the following adjoint problem:  for $f \in
(L^2(\Omega))^N$, find $\Psi_f\in \mathcal{T}_{\Phi_0}$ such that
\begin{eqnarray}\label{problem-adjoint}
a'(\Phi_0;\Psi_f,\Gamma)=(f,\Gamma) \quad \forall~ \Gamma\in
\mathcal{T}_{\Phi_0}.
\end{eqnarray}
Since $\mathcal{L}'_{\Phi_0}(\Lambda_0,\Phi_0)$ is an isomorphism,
\eqref{problem-adjoint} has a unique solution and
\begin{eqnarray}\label{psi}
\|\Psi_f\|_{1,\Omega}\lesssim \|f\|_{0,\Omega}.
\end{eqnarray}
Let $K:((L^2(\Omega))^N, (\cdot,\cdot))\to
(\mathcal{T}_{\Phi_0},(\nabla \cdot,\nabla \cdot))$ be the operator
satisfying
\begin{eqnarray}\label{K-operator}
(\nabla Kw,\nabla v)=(w,v)\qquad \forall~w\in (L^2(\Omega))^N,
\forall~ v\in\mathcal{T}_{\Phi_0}.
\end{eqnarray}
Then $K$ is compact. Set
\begin{eqnarray*}
\rho_n=\sup_{f\in (L^2(\Omega))^N,\|f\|_{0,\Omega}\leq
1}\inf_{\Psi\in\mathcal{H}_n}\|\nabla\big((\mathcal{L}_{\Phi_0}'(\Lambda_0,\Phi_0))^{-1}Kf-\Psi\big)\|_{0,\Omega},
\end{eqnarray*}
we then have the following estimate (see, e.g., \cite{bao-zhou04})
\begin{eqnarray}\label{tmp-2}
\|\nabla\big((\mathcal{L}_{\Phi_0}'(\Lambda_0,\Phi_0))^{-1}Kf-P_n'(\mathcal{L}_{\Phi_0}'(\Lambda_0,\Phi_0))^{-1}Kf\big)\|_{0,\Omega}\lesssim
\rho_n\|f\|_{0,\Omega} \qquad \forall~f\in(L^2(\Omega))^N
\end{eqnarray}
with $$\lim_{n\to \infty}\rho_n=0,$$ where  $P_n':
\mathcal{T}_{\Phi_0}\to \mathcal{T}_{\Phi_0}\cap \mathcal{H}_n$ is
the projection operator satisfying
$$ (\nabla (\Phi_1-P_n'\Phi_1), \nabla \Phi_2)=0 \qquad\forall~ \Phi_2\in\mathcal{T}_{\Phi_0}\cap \mathcal{H}_n .$$

\begin{theorem}\label{priori}
 If
 Assumptions {\bf A2} and {\bf A3} are satisfied, then there exists
$\delta>0$ such that for sufficiently large $n$,
\eqref{problem-eigen-dis} has a unique local solution
$(\Lambda_n,\Phi_n)\in X_{\Phi_0,n}\cap B_{\delta}(y_0)$ satisfying
\begin{eqnarray}\label{H1-norm}
\|\Phi_0-\Phi_n\|_{1,\Omega}\lesssim
d_{\mathcal{H}}(\mathcal{G},\mathcal{H}_n)
\end{eqnarray}
and
\begin{eqnarray}\label{L2-norm}
\|\Phi_0-\Phi_n\|_{0,\Omega}+|\Lambda_0-\Lambda_n|\lesssim \rho_n
\|\Phi_0-\Phi_n\|_{1,\Omega}
\end{eqnarray}
with $\rho_n\to 0$ as $n\to \infty.$
\end{theorem}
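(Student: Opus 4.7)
\medskip

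\noindent\textbf{Proof proposal.} Existence and uniqueness of $y_n=(\Lambda_n,\Phi_n)\in X_{\Phi_0,n}\cap B_{\delta}(y_0)$ follow directly from Theorem \ref{unique} once $n\ge n_1$. The task is therefore to produce the two error bounds \eqref{H1-norm} and \eqref{L2-norm}. The plan is to extract \eqref{H1-norm} from the Banach fixed-point iteration used in Theorem \ref{unique}, and then carry out a nonlinear Aubin--Nitsche duality argument on the tangent space $\mathcal{T}_{\Phi_0}$ to prove \eqref{L2-norm}.

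For \eqref{H1-norm}, the fixed-point identity $y_n=\mathcal{N}(y_n)$ from the proof of Theorem \ref{unique} yields
\begin{equation*}
F'_{n,\Pi_n y_0}(y_n-\Pi_n y_0)=-F_n(\Pi_n y_0)-(y_n-\Pi_n y_0)\int_0^1\bigl(F'_{n,\Pi_n y_0+t(y_n-\Pi_n y_0)}-F'_{n,\Pi_n y_0}\bigr)\,dt.
\end{equation*}
Using the uniform bound $\|F'_{n,\Pi_n y_0}{}^{-1}\|\le M$ from Lemma \ref{lemma-h-isomorphism}, the Hölder bound \eqref{eq-holder} on the integral term, and smallness of $R=\|y_n-\Pi_n y_0\|_{X_{\Phi_0}}$, I would absorb the second right-hand term and obtain $\|y_n-\Pi_n y_0\|_{X_{\Phi_0}}\lesssim \|F_n(\Pi_n y_0)\|_{X_{\Phi_0,n}^*}$. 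Because $F(y_0)=0$, the Lipschitz estimate \eqref{eq-lipschitz} of Lemma \ref{lemma-con-holder} and \eqref{pi-n-convergence--} then give $\|F_n(\Pi_n y_0)\|_{X_{\Phi_0,n}^*}\lesssim \|\Pi_n y_0-y_0\|_{X_{\Phi_0}}\lesssim \inf_{\Psi\in\mathcal{H}_n}\|\Phi_0-\Psi\|_{1,\Omega}\le d_{\mathcal{H}}(\mathcal{G},\mathcal{H}_n)$. A triangle inequality concludes \eqref{H1-norm}.

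For \eqref{L2-norm}, I first apply Lemma \ref{lemma-decomposition} to write $\Phi_n-\Phi_0=S(W_n)\Phi_0+W_n$ with $W_n\in\mathcal{T}_{\Phi_0}$ and $|S(W_n)|\lesssim\|W_n\|_{0,\Omega}^{2}$, so bounding $\|\Phi_n-\Phi_0\|_{0,\Omega}$ reduces to bounding $\|W_n\|_{0,\Omega}$. Using the adjoint problem \eqref{problem-adjoint} with right-hand side $W_n$ gives $\Psi_{W_n}=(\mathcal{L}'_{\Phi_0}(\Lambda_0,\Phi_0))^{-1}KW_n\in\mathcal{T}_{\Phi_0}$, satisfying $\|\Psi_{W_n}\|_{1,\Omega}\lesssim\|W_n\|_{0,\Omega}$ by \eqref{psi} and
$$\|\nabla(\Psi_{W_n}-P_n'\Psi_{W_n})\|_{0,\Omega}\lesssim\rho_n\|W_n\|_{0,\Omega}$$
by \eqref{tmp-2}. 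Testing \eqref{problem-adjoint} with $\Gamma=W_n$ and inserting $P_n'\Psi_{W_n}$ yields
$$\|W_n\|_{0,\Omega}^{2}=a'(\Phi_0;\Psi_{W_n}-P_n'\Psi_{W_n},W_n)+a'(\Phi_0;P_n'\Psi_{W_n},W_n).$$
The first term is controlled by continuity of $a'$ by $\rho_n\|W_n\|_{1,\Omega}\|W_n\|_{0,\Omega}$. The second term is handled by nonlinear Galerkin orthogonality: since $F_n(y_n)=0$ and $F(y_0)=0$, $\langle F(y_n)-F(y_0),(0,P_n'\Psi_{W_n})\rangle=0$ (note $P_n'\Psi_{W_n}\in\mathcal{T}_{\Phi_0}\cap\mathcal{H}_n\subset Y_n$, and the Lagrange multiplier contribution in \eqref{eq-F-derivative} vanishes on $\mathcal{T}_{\Phi_0}$). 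A Taylor expansion combined with the Hölder bound \eqref{eq-holder} gives
$$a'(\Phi_0;\Phi_n-\Phi_0,P_n'\Psi_{W_n})=\langle F'_{y_0}(y_n-y_0),(0,P_n'\Psi_{W_n})\rangle\lesssim(\|y_n-y_0\|_Y^{1+\alpha}+\|y_n-y_0\|_Y^{3})\|W_n\|_{0,\Omega},$$
and the $S(W_n)\Phi_0$ piece contributes $O(\|W_n\|_{0,\Omega}^{2}\cdot\|W_n\|_{0,\Omega})$. Combining everything, invoking the $H^1$ bound from the previous step to estimate $\|y_n-y_0\|_Y\lesssim d_{\mathcal{H}}(\mathcal{G},\mathcal{H}_n)\to 0$, and absorbing the higher-order terms for $n$ large, I obtain $\|W_n\|_{0,\Omega}\lesssim\rho_n\|W_n\|_{1,\Omega}\lesssim\rho_n\|\Phi_n-\Phi_0\|_{1,\Omega}$. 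The Lagrange-multiplier error $|\Lambda_n-\Lambda_0|$ is then controlled by the same duality: taking suitable dual test functions $z\in X_{\Phi_0}$ and exploiting the inf-sup constant of $F'_{y_0}$ in \eqref{inf-sup}, the same Galerkin-orthogonality-plus-$\rho_n$-approximation mechanism yields $|\Lambda_n-\Lambda_0|\lesssim\rho_n\|\Phi_n-\Phi_0\|_{1,\Omega}$.

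The main obstacle is the nonlinear duality step: the Galerkin orthogonality is only an identity on $Y_n$, so the continuous dual solution $\Psi_{W_n}$ must be split as $P_n'\Psi_{W_n}+(\Psi_{W_n}-P_n'\Psi_{W_n})$, and the nonlinearity of $F$ introduces a Hölder remainder controlled only by \eqref{eq-holder} and the $S(W_n)$ terms from Lemma \ref{lemma-decomposition}. The key is that both of these contributions are of order $\|y_n-y_0\|_Y^{1+\alpha}+\|W_n\|_{0,\Omega}^{2}$, which, by the $H^1$ bound already proved, are strictly subleading to the target $\rho_n\|W_n\|_{1,\Omega}$ for large $n$ and so can be absorbed.
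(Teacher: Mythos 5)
Your overall architecture coincides with the paper's: existence, uniqueness and the $H^1$ bound \eqref{H1-norm} are extracted from the fixed-point identity of Theorem \ref{unique} together with Lemma \ref{lemma-h-isomorphism}, \eqref{eq-lipschitz} and \eqref{pi-n-convergence--}, exactly as in the paper; and the $L^2$ bound is attacked by the same decomposition $\Phi_n-\Phi_0=S(W)\Phi_0+W$ plus an Aubin--Nitsche duality on $\mathcal{T}_{\Phi_0}$ with the splitting $\Psi=P_n'\Psi+(\Psi-P_n'\Psi)$. The $H^1$ part is complete.

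There is, however, a genuine gap in the duality step. You control the nonlinear Taylor remainder by the operator-norm H\"older estimate \eqref{eq-holder}, which leaves you, after dividing by $\|W_n\|_{0,\Omega}$, with a term of size $\|y_n-y_0\|_Y^{1+\alpha}\simeq \|\Phi_n-\Phi_0\|_{1,\Omega}^{\alpha}\,\|\Phi_n-\Phi_0\|_{1,\Omega}$. This is $o\big(\|\Phi_n-\Phi_0\|_{1,\Omega}\big)$, but it is neither $\lesssim\rho_n\|\Phi_n-\Phi_0\|_{1,\Omega}$ nor absorbable into the left-hand side $\|W_n\|_{0,\Omega}$ unless $\|\Phi_n-\Phi_0\|_{1,\Omega}^{\alpha}\lesssim\rho_n$, which fails whenever $\alpha<1$ (the relevant case for $X_\alpha$/LDA; e.g.\ for linear finite elements $\|\Phi_n-\Phi_0\|_{1,\Omega}\sim h\sim\rho_n$ and $h^{\alpha}\gg h$). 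Your chain of inequalities therefore only yields $\|\Phi_n-\Phi_0\|_{0,\Omega}\lesssim\rho_n\|\Phi_n-\Phi_0\|_{1,\Omega}+\|\Phi_n-\Phi_0\|_{1,\Omega}^{1+\alpha}$, strictly weaker than \eqref{L2-norm} (it would give $h^{1+\alpha}$ instead of $h^2$ in Remark \ref{finite-result}). The paper avoids this by \emph{not} using the operator-norm bound: it estimates the specific remainder $E'(\Phi_n)(P_n'\Psi)-E'(\Phi_0)(P_n'\Psi)-E''(\Phi_0)(P_n'\Psi,\Phi_n-\Phi_0)$ with a mixed-norm H\"older argument (in the spirit of \eqref{tmp-1}--\eqref{proof-75}, but distributing the exponents so that two of the three factors are measured in $L^2$), obtaining $(\|\Phi_n-\Phi_0\|_{1,\Omega}^{\alpha}+\|\Phi_n-\Phi_0\|_{1,\Omega}^{2})\,\|\Phi_n-\Phi_0\|_{0,\Omega}^{2}$ in \eqref{temp}; the extra $L^2$ factor of the increment is precisely what permits absorption into the left-hand side. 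A secondary gap: the bound $|\Lambda_0-\Lambda_n|\lesssim\rho_n\|\Phi_0-\Phi_n\|_{1,\Omega}$ is only asserted (``suitable dual test functions''); the paper obtains it by directly expanding $\lambda_{ij,n}-\lambda_{0,ij}$ from \eqref{eq-Lambda} and \eqref{eq-Lambda-dis} to get \eqref{lambda-pro}, $|\Lambda_n-\Lambda_0|\lesssim\|\Phi_n-\Phi_0\|_{1,\Omega}^2+\|\Phi_n-\Phi_0\|_{0,\Omega}$, and then feeds in the already-proved $L^2$ estimate. You should supply both the refined remainder bound and this multiplier computation to close the argument.
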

\begin{proof}
We obtain from Theorem \ref{unique} that there exists $\delta>0$
such that for sufficiently large $n$, \eqref{problem-eigen-dis} has
a unique local solution $y_n\equiv(\Lambda_n,\Phi_n)\in
X_{\Phi_0,n}\cap B_{\delta}(y_0)$. Hence, we have
\begin{eqnarray*}
F_{n}(y_{n})-F_{n}(\Pi_{n} y_0)=-F_{n}(\Pi_{n} y_0),
\end{eqnarray*}
which leads to
\begin{eqnarray*}
F'_{{n},\Pi_{n} y_0}(y_{n}-\Pi_{n} y_0) =-F_{n}(\Pi_{n} y_0) -
(y_{n}-\Pi_{n} y_0) \int_0^1\big(F'_{{n},\Pi_{n} y_0+t(y_{n}-\Pi_{n}
y_0)}-F'_{n,\Pi_{n} y_0}\big)dt.
\end{eqnarray*}
Using the similar arguments in the proof of Theorem \ref{unique},
 we obtain from Lemma \ref{lemma-h-isomorphism}  that for sufficiently
large $n$
\begin{eqnarray*}\label{proof-33}
\|y_{n}-\Pi_{n} y_0\|_{X_{\Phi_0}} \lesssim \|y_0-\Pi_{n}
y_0\|_{X_{\Phi_0}} +\|y_{n}-\Pi_{n}
y_0\|_{X_{\Phi_0}}(\|y_{n}-\Pi_{n}
y_0\|_{X_{\Phi_0}}^{\alpha}+\|y_{n}-\Pi_{n} y_0\|_{X_{\Phi_0}}^2),
\end{eqnarray*}
which together with \eqref{pi-n-convergence} and the fact that
$y_n\in B_{\delta}(y_0) $ implies that for sufficiently large $n$
\begin{eqnarray}\label{proof-3}
\|y_{n}-\Pi_{n} y_0\|_{X_{\Phi_0}}\lesssim \|y_0-\Pi_{n}
y_0\|_{X_{\Phi_0}}.
\end{eqnarray}
Using \eqref{pi-n-convergence--} and \eqref{proof-3}, we conclude
\begin{eqnarray*}
\|y_{n}-y_0\|_{X_{\Phi_0}} \lesssim \|y_{n}-\Pi_{n}
y_0\|_{X_{\Phi_0}} + \|y_0-\Pi_{n}
y_0\|_{X_{\Phi_0}}\lesssim\inf_{\Psi\in\mathcal{H}_{n}}\|\Psi-\Phi_0\|_{1,\Omega},
\end{eqnarray*}
which implies \eqref{H1-norm}.

Since there exists  $\delta_i\in [0,1]$ such that
\begin{eqnarray*}
(\mathcal{E}'(\rho_{\Phi_n})\phi_{i,n}-\mathcal{E}'(\rho_{\Phi_0})\phi_{0,i},\phi_{j,n})&=&
\int_{\Omega}
(\mathcal{E}'(\rho_{\xi})+2\xi_i^2\mathcal{E}''(\rho_{\xi}))(\phi_{i,n}-\phi_{0,i})\phi_{j,n},
\end{eqnarray*}
where $\xi=(\xi_1,\xi_2,\cdots,\xi_N)$ with
$\xi_i=\delta_i\phi_{i,n}+(1-\delta_i)\phi_{0,i}$, using Assumption
{\bf A2} we get
\begin{eqnarray*}
&
&(\mathcal{E}'(\rho_{\Phi_n})\phi_{i,n}-\mathcal{E}'(\rho_{\Phi_0})\phi_{0,i},\phi_{j,n})
  \lesssim
\int_{\Omega}(\rho_{\xi}+\rho_{\xi}^{\alpha})(\phi_{i,n}-\phi_{0,i})\phi_{j,n}\\
&\lesssim& \|\rho_{\xi}^{\alpha}\|_{0,3/{\alpha},\Omega}
\|\phi_{i,n}-\phi_{0,i}\|_{0,\Omega}
\|\phi_{j,n}\|_{0,6/(3-2\alpha),\Omega}+\|\rho_{\xi}\|_{0,3,\Omega}
\|\phi_{i,n}-\phi_{0,i}\|_{0,\Omega} \|\phi_{j,n}\|_{0,6,\Omega} \\
&\lesssim&  \|\phi_{i,n}-\phi_{0,i}\|_{0,\Omega},
\end{eqnarray*}
from which we have
\begin{eqnarray*}
&
&\big((\mathcal{E}'(\rho_{\Phi_n})-\mathcal{E}'(\rho_{\Phi_0}))\phi_{i,n},\phi_{j,n}\big)\\
&=&(\mathcal{E}'(\rho_{\Phi_n})\phi_{i,n}-\mathcal{E}'(\rho_{\Phi_0})\phi_{0,i},\phi_{j,n})
+(\mathcal{E}'(\rho_{\Phi_0})(\phi_{0,i}-\phi_{i,n}),\phi_{j,n})\nonumber\\
&\lesssim& \|\phi_{i,n}-\phi_{0,i}\|_{0,\Omega}.
\end{eqnarray*}
Note that
\begin{eqnarray*}
&&\lambda_{ij,n}-\lambda_{0,ij}=(A_{\Phi_n}\phi_{i,n},\phi_{j,n})
-(A_{\Phi_0}\phi_{0,i},\phi_{0,j})\nonumber\\
&=&(A_{\Phi_0}(\phi_{i,n}-\phi_{0,i}),\phi_{j,n}-\phi_{0,j})
+\int_{\Omega}\sum_{k=1}^n\lambda_{0,ik}\phi_{0,k}(\phi_{j,n}-\phi_{0,j})\nonumber\\
&
&+\int_{\Omega}\sum_{k=1}^n\lambda_{0,jk}\phi_{0,k}(\phi_{i,n}-\phi_{0,i})
+\int_{\Omega}(\mathcal{E}'(\rho_{\Phi_n})-\mathcal{E}'(\rho_{\Phi_0}))
\phi_{i,n}\phi_{j,n}\\
& &+D(\phi_{i,n}\phi_{j,n},\rho_{\Phi_n}-\rho_{\Phi_0}).
\end{eqnarray*}
Hence we  conclude that
\begin{eqnarray}\label{lambda-pro}
|\Lambda_n-\Lambda_0|\lesssim
\|\Phi_n-\Phi_0\|^2_{1,\Omega}+\|\Phi_n-\Phi_0\|_{0,\Omega}.
\end{eqnarray}
By Lemma \ref{lemma-decomposition}, we decompose $\Phi_n$ as
\begin{eqnarray}\label{decomp}
\Phi_n=\Phi_0+\mathcal{S}(W)\Phi_0+W,
\end{eqnarray}
where  $W\in \mathcal{T}_{\Phi_0}$ and $\mathcal{S}(W)\in
\mathcal{S}^{N\times N}$ satisfying
\begin{eqnarray}\label{property-SW}
|\mathcal{S}(W)|\leq \|W\|_{0,\Omega}^2\leq
\|\Phi_0-\Phi_n\|_{0,\Omega}^2.
\end{eqnarray}
Setting $\Psi=\Psi_{\Phi_n-\Phi_0}$ and applying  the duality
problem of \eqref{problem-adjoint}, we obtain
\begin{eqnarray*}
\|\Phi_n-\Phi_0\|_{0,\Omega}^2&=&(\Phi_n-\Phi_0,\Phi_n-\Phi_0)\nonumber\\
&=&(\Phi_n-\Phi_0,\mathcal{S}(W)\Phi_0)+(\Phi_n-\Phi_0,W)\nonumber\\
&=&(\Phi_n-\Phi_0,\mathcal{S}(W)\Phi_0)+a'(\Phi_0;\Psi,W),
\end{eqnarray*}
which together with \eqref{decomp} leads to
\begin{eqnarray*}
\|\Phi_n-\Phi_0\|_{0,\Omega}^2&=&(\Phi_n-\Phi_0,\mathcal{S}(W)\Phi_0)
-a'(\Phi_0;\Psi,\mathcal{S}(W)\Phi_0)+a'(\Phi_0;\Psi,\Phi_n-\Phi_0)\nonumber\\
&=&(\Phi_n-\Phi_0,\mathcal{S}(W)\Phi_0)-a'(\Phi_0;\Psi,\mathcal{S}(W)\Phi_0)
+a'(\Phi_0;\Psi-P_n'\Psi,\Phi_n-\Phi_0)\nonumber\\
& &+a'(\Phi_0;P_n'\Psi,\Phi_n-\Phi_0).
\end{eqnarray*}
Note that from \eqref{problem-eigen-compact-L} and
\eqref{problem-eigen-dis}, we have
\begin{eqnarray*}
2a'(\Phi_0;P_n'\Psi,\Phi_n-\Phi_0)&=&E''(\Phi_0)(P_n'\Psi,\Phi_n-\Phi_0)
-E'(\Phi_n)(P_n'\Psi)+E'(\Phi_0)(P_n'\Psi)\nonumber\\
& &+2\sum_{i,j=1}^N(\lambda_{ij,n}
-\lambda_{0,ij})\int_{\Omega}\phi_{j,n}P_n'\psi_i
\end{eqnarray*}
while the fact that $\Psi\in \mathcal{T}_{\Phi_0}$ yields
\begin{eqnarray*}
\int_{\Omega}\phi_{j,n}P_n'\psi_i=\int_{\Omega}(\phi_{j,n}
-\phi_{0,j})\psi_i+\int_{\Omega}\phi_{j,n}(P_n'\psi_i-\psi_i),
\end{eqnarray*}
we then come to
\begin{eqnarray*}
\|\Phi_n-\Phi_0\|_{0,\Omega}^2&=&(\Phi_n-\Phi_0,\mathcal{S}(W)\Phi_0)
-a'(\Phi_0;\Psi,\mathcal{S}(W)\Phi_0)
+a'(\Phi_0;\Psi-P_n'\Psi,\Phi_n-\Phi_0)\nonumber\\
&&-\frac{1}{2}\big(E'(\Phi_n)(P_n'\Psi)-E'(\Phi_0)(P_n'\Psi)
-E''(\Phi_0)(P_n'\Psi,\Phi_n-\Phi_0)\big)\nonumber\\
& &+\sum_{i,j=1}^N(\lambda_{ij,n}-\lambda_{0,ij})
\big(\int_{\Omega}(\phi_{j,n}-\phi_{0,j})\psi_i
+\int_{\Omega}\phi_{j,n}(P_n'\psi_i-\psi_i)\big).
\end{eqnarray*}
Using the  Taylor expansion, we have that there exists
  $\delta \in [0,1]$ such that
\begin{eqnarray}\label{temp}
&&E'(\Phi_n)(P_n'\Psi)-E'(\Phi_0)(P_n'\Psi)
-E''(\Phi_0)(P_n'\Psi,\Phi_n-\Phi_0)\nonumber\\
&=&E''(\xi)(P_n'\Psi,\Phi_n-\Phi_0)-E''(\Phi_0)(P_n'\Psi,\Phi_n-\Phi_0)\nonumber\\
&\lesssim&
(\|\Phi_n-\Phi_0\|_{1,\Omega}^{\alpha}+\|\Phi_n-\Phi_0\|_{1,\Omega}^{2})\|\Phi_n-\Phi_0\|_{0,\Omega}^2,
\end{eqnarray}
where $\xi=\Phi_0+\delta (\Phi_n-\Phi_0)$ and the last inequality is
obtained by the similar arguments in the proof of \eqref{tmp-1} or
Lemma 4.5 in \cite{cancesM10} when $\Gamma_1=\Phi_n-\Phi_0$,
$\Gamma_2=\Phi_n-\Phi_0$ and $\Gamma_3=P_n'\Psi$, and using the fact
\begin{eqnarray*}
\|P_n'\Psi\|_{1,\Omega}\lesssim \|\Psi\|_{1,\Omega}\lesssim
\|\Phi_n-\Phi_0\|_{0,\Omega}.
\end{eqnarray*}

Taking   \eqref{psi}, \eqref{tmp-2}, \eqref{property-SW} and
\eqref{temp} into account, we obtain that
\begin{eqnarray*}
\|\Phi_n-\Phi_0\|_{0,\Omega}&\lesssim&
\|\Phi_n-\Phi_0\|_{0,\Omega}^2+\rho_n\|\Phi_n-\Phi_0\|_{1,\Omega}
+\|\Phi_n-\Phi_0\|_{1,\Omega}^{\alpha}\|\Phi_n-\Phi_0\|_{0,\Omega}\nonumber\\
&+&|\Lambda_n-\Lambda_0|(\|\Phi_n-\Phi_0\|_{0,\Omega}+\rho_n),
\end{eqnarray*}
which together with \eqref{lambda-pro} and Theorem \ref{theo-convergence} produces
\begin{eqnarray*}
\|\Phi_n-\Phi_0\|_{0,\Omega}\lesssim
\rho_n\|\Phi_n-\Phi_0\|_{1,\Omega}
\end{eqnarray*}
when $n\gg 1.$ This completes the proof.
\end{proof}
\begin{remark}
Theorem \ref{priori} shows that under certain assumptions every
ground state solution can be approximated with some convergent rate
by finite dimensional solutions. We see that \cite{cancesM10}
provided numerical analysis of plane wave approximations only  while
our results apply to general finite dimensional discretizations and
the analysis is systematic and carried out under very mild
assumptions.
\end{remark}

\begin{remark}\label{quadratic}
If in addition, $V_{loc}\in H^1(\Omega)$, $\zeta_j\in
H^1(\Omega)~(j=1,2,\cdots,M)$ and $\mathcal{E}\in
C^1([0,\infty))\cap C^3((0,\infty))$,
 then for sufficiently large $n$, estimates
\eqref{H1-norm} and \eqref{L2-norm} are also satisfied with
$\tilde{\rho}_n\to 0$ as $n\to\infty$. Here\begin{eqnarray*}
\tilde{\rho}_n=\sup_{f\in \mathcal{H},\|f\|_{1,\Omega}\leq
1}\inf_{\Psi\in\mathcal{H}_n}\|\nabla\big((\mathcal{L}_{\Phi_0}'(\Lambda_0,\Phi_0))^{-1}Kf-\Psi\big)\|_{0,\Omega}
\end{eqnarray*}
and  $K:(\mathcal{H}, (\nabla \cdot,\nabla \cdot))\to
(\mathcal{T}_{\Phi_0},(\nabla \cdot,\nabla \cdot))$ satisfying
\eqref{K-operator}.
\end{remark}

\begin{remark}\label{finite-result}
Let $y_0\equiv(\Lambda_0,\Phi_0)$ be the ground state solution of
\eqref{problem-eigen-compact-L} satisfying \eqref{assumption-a2}. We
assume that $\Omega$ is a convex bounded domain and $S_n$ is
replaced by the standard  finite element space $S_0^{h,k}(\Omega)$
of piecewise polynomials of degree $k~ (k=1,2)$ of $H^1_0(\Omega)$
over a shape-regular mesh with size $h$. Let
$(\Lambda_{h,k},\Phi_{h,k})\in X_{\Phi_0,h}$ be the ground state
solution of \eqref{problem-eigen-dis}  and Assumption {\bf A2} hold.
Then
\begin{eqnarray*}
|\Lambda_0-\Lambda_{h,1}|+\|\Phi_0-\Phi_{h,1}\|_{0,\Omega}+h\|\Phi_0-\Phi_{h,1}\|_{1,\Omega}\lesssim
h^2
\end{eqnarray*}
when $h\ll 1$. If in addition, $V_{loc}\in H^1(\Omega)$, $\zeta_j\in
H^1(\Omega)~(j=1,2,\cdots,M)$ and $\mathcal{E}\in
C^1([0,\infty))\cap C^3((0,\infty))$, then
\begin{eqnarray*}
|\Lambda_0-\Lambda_{h,2}|+h\|\Phi_0-\Phi_{h,2}\|_{0,\Omega}+h^2\|\Phi_0-\Phi_{h,2}\|_{1,\Omega}\lesssim
h^4
\end{eqnarray*}
when $h\ll 1$.
\end{remark}

\section{Numerical examples}\label{sec-numerical}
\setcounter{equation}{0}

In this section, we will report several numerical examples that
support our theory. These numerical experiments were carried out on
LSSC3 cluster in the State Key Laboratory of Scientific and
Engineering Computing, Chinese Academy of Sciences. Our code is
based on the PHG finite element toolbox developed in the State Key
Laboratory of Scientific and Engineering Computing, Chinese Academy
of Sciences.

In these examples, we solved  Kohn-Sham equation (\ref{problem-eigen-compact-L}). We chose our computational domain $\Omega$ as $[-10.0,10.0]^3$.
 In computation, we used the norm-conserving
pseudopotential \cite{troullier90} obtained by fhi98PP software and applied the local density approximation (LDA) for the exchange-correction potential.
We applied the standard linear and quadratic finite element discretizations over uniform tetrahedral triangulations. The finite dimensional nonlinear
eigenvalue problems were then solved by self consistent field iterations.
In each iteration, the Kohn-Sham Hamiltonian is constructed from a trial electron density, the electron density is then obtained from the low-lying
eigenfunctions of the discretized Hamiltonian, the resulting electron density and the trial electron density are then mixed and form a new trial electron
density. The loop continues until self-consistency of the electron density is reached.

We present numerical results for $N_2$, $C_2H_4$ and $SiH_4$
molecules. Since analytical solutions are not available, we use the
numerical solutions on a very fine grid for references to calculate
the approximation errors.

Let us first come to the ground state total energy approximations. The errors of total energy of $N_2$, $C_2H_4$ and $SiH_4$ are presented in Figures
\ref{fig:n2-energy}, \ref{fig:c2h4-energy} and \ref{fig:sih4-energy}, respectively. We can see that  convergence rates for linear and quadratic finite
elements are $h^2$ and $h^4$ respectively, which agrees well with the results predicted by Theorem \ref{thm:error-energy}.  We then present the
approximation errors of the first two eigenvalues for these three molecules, see Figures \ref{fig:n2-eval1}, \ref{fig:c2h4-eval1} and
\ref{fig:sih4-eval1}. We may see that these results coincide well with our theory (see, e.g., Remark \ref{finite-result}), too.

\begin{figure}[ht]
  \centering
  \subfloat[Linear finite elements]{
    \includegraphics[width=7.0cm]{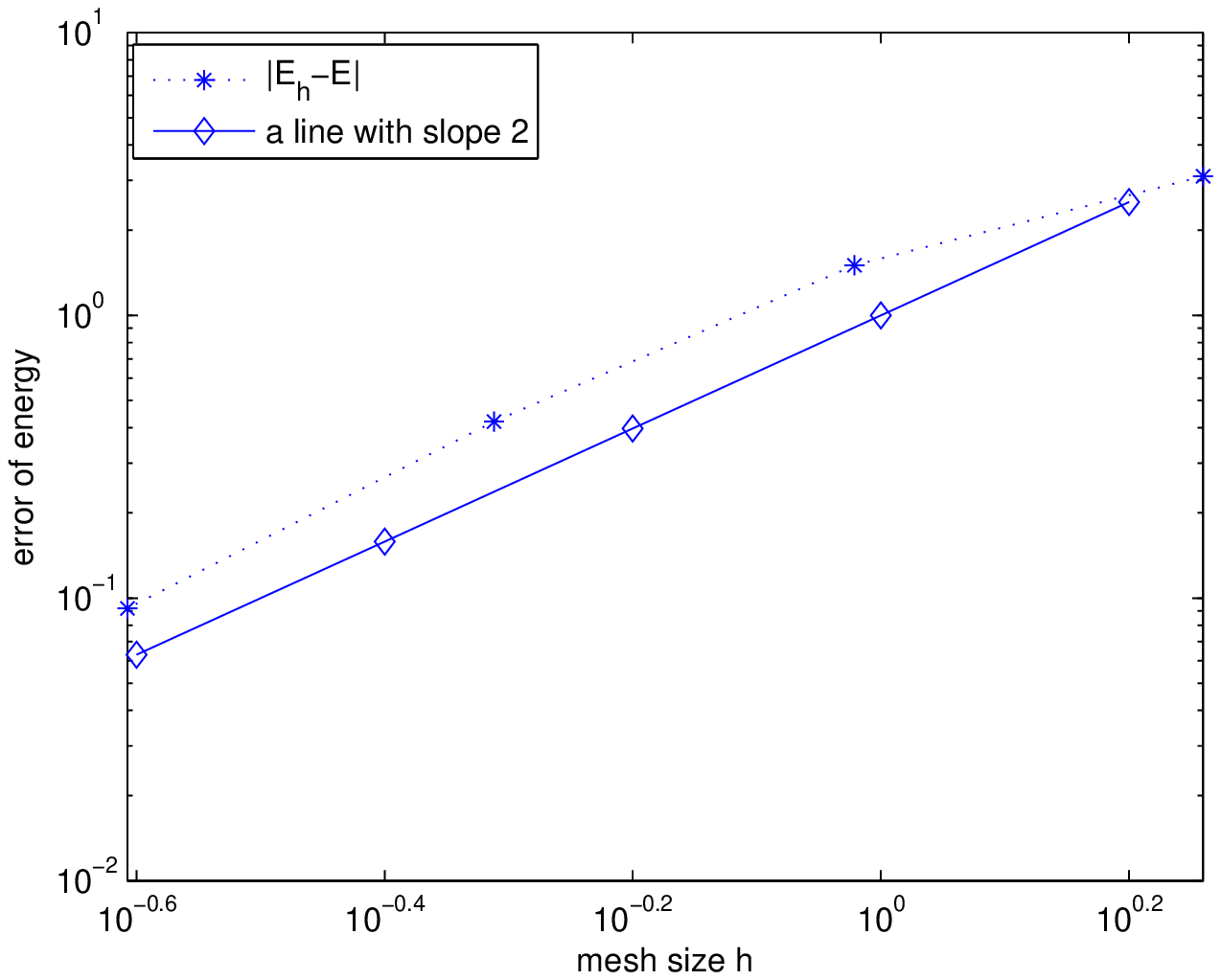}
  }
  \,
  \subfloat[Quadratic finite elements]{
    \includegraphics[width=7.0cm]{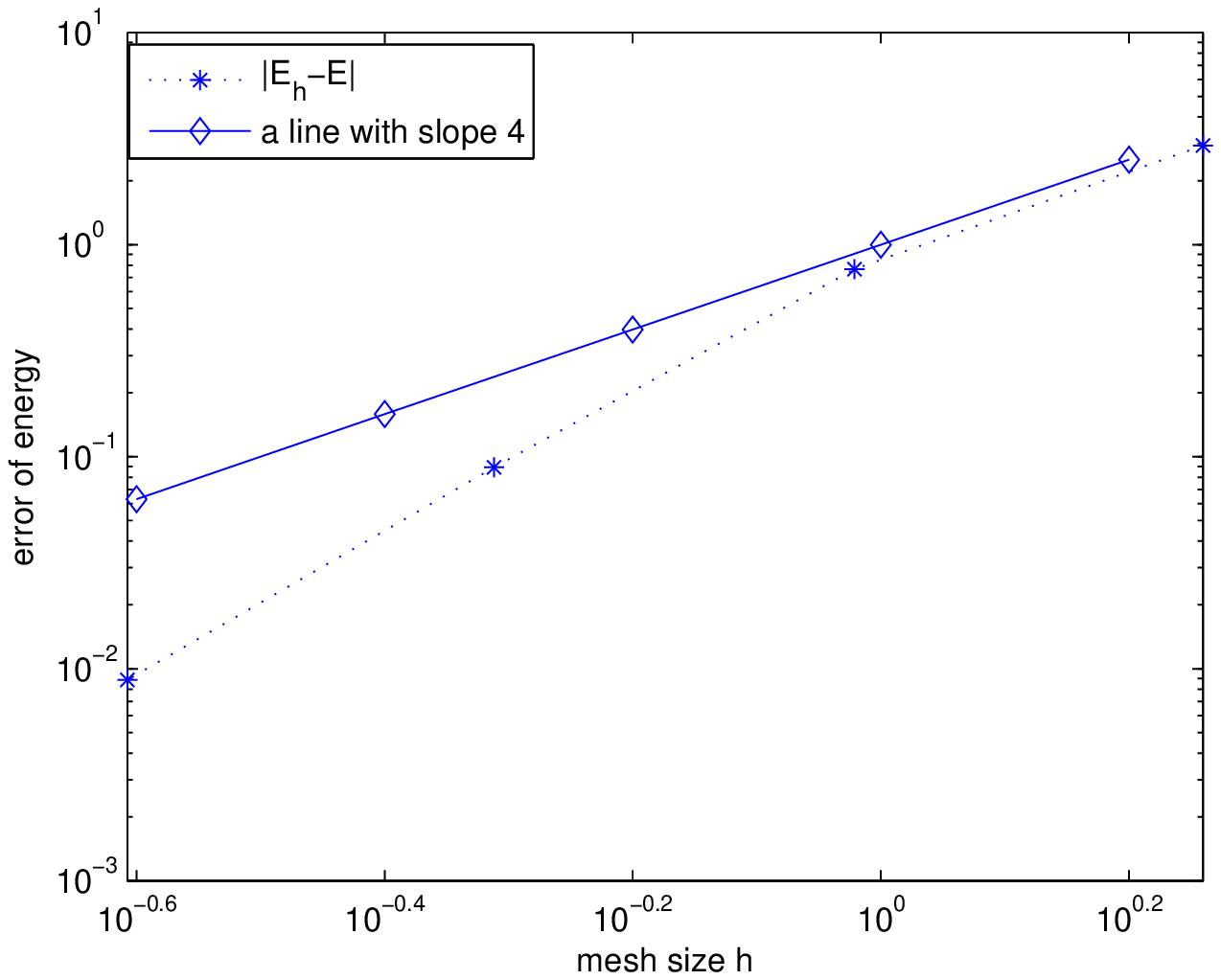}
  }
  \caption{$N_2$: errors of the ground state total energy}
  \label{fig:n2-energy}
\end{figure}

\begin{figure}[htb]
  \centering
  \subfloat[Linear finite elements]{
    \includegraphics[width=7.0cm]{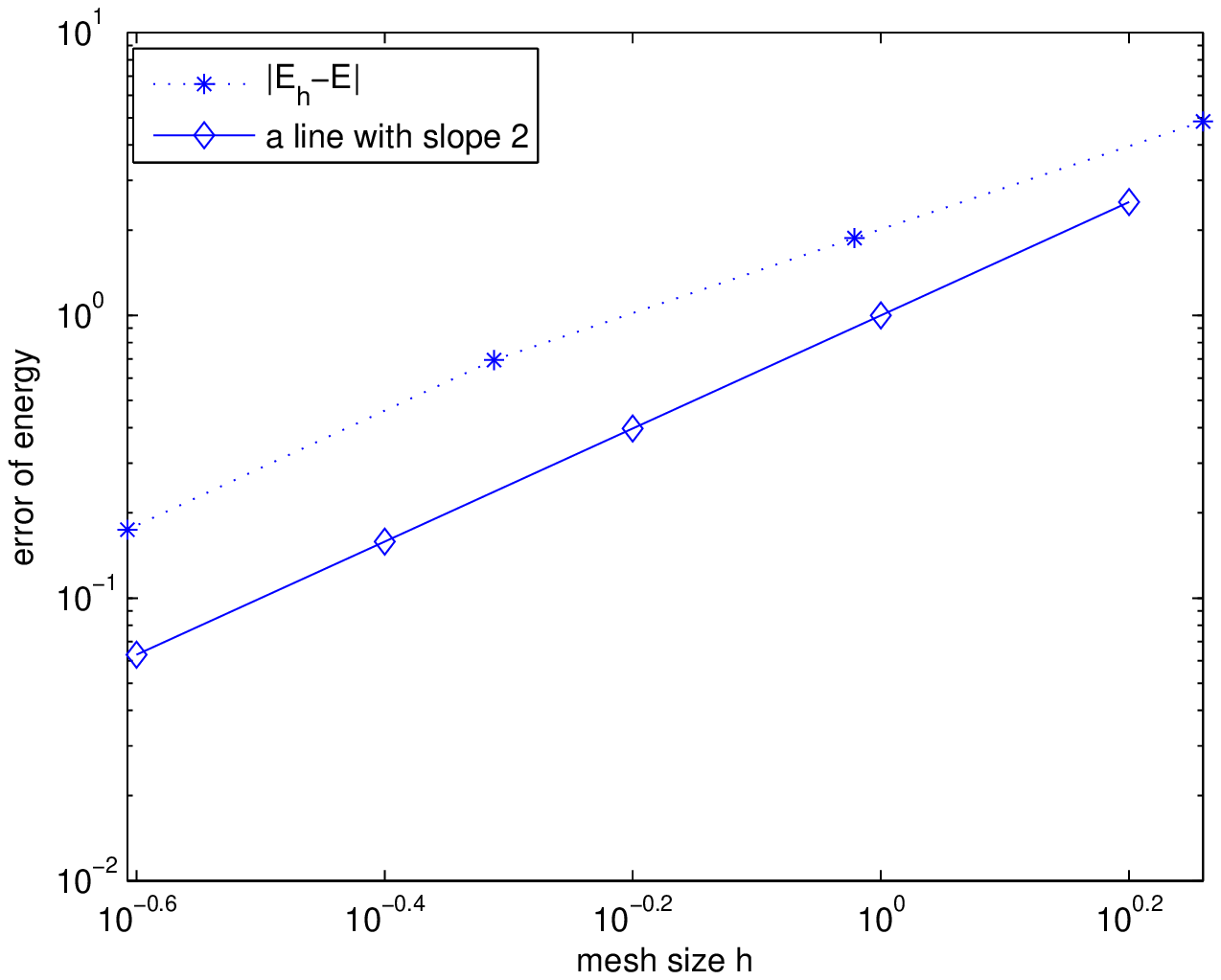}
  }
  \,
  \subfloat[Quadratic finite elements]{
    \includegraphics[width=7.0cm]{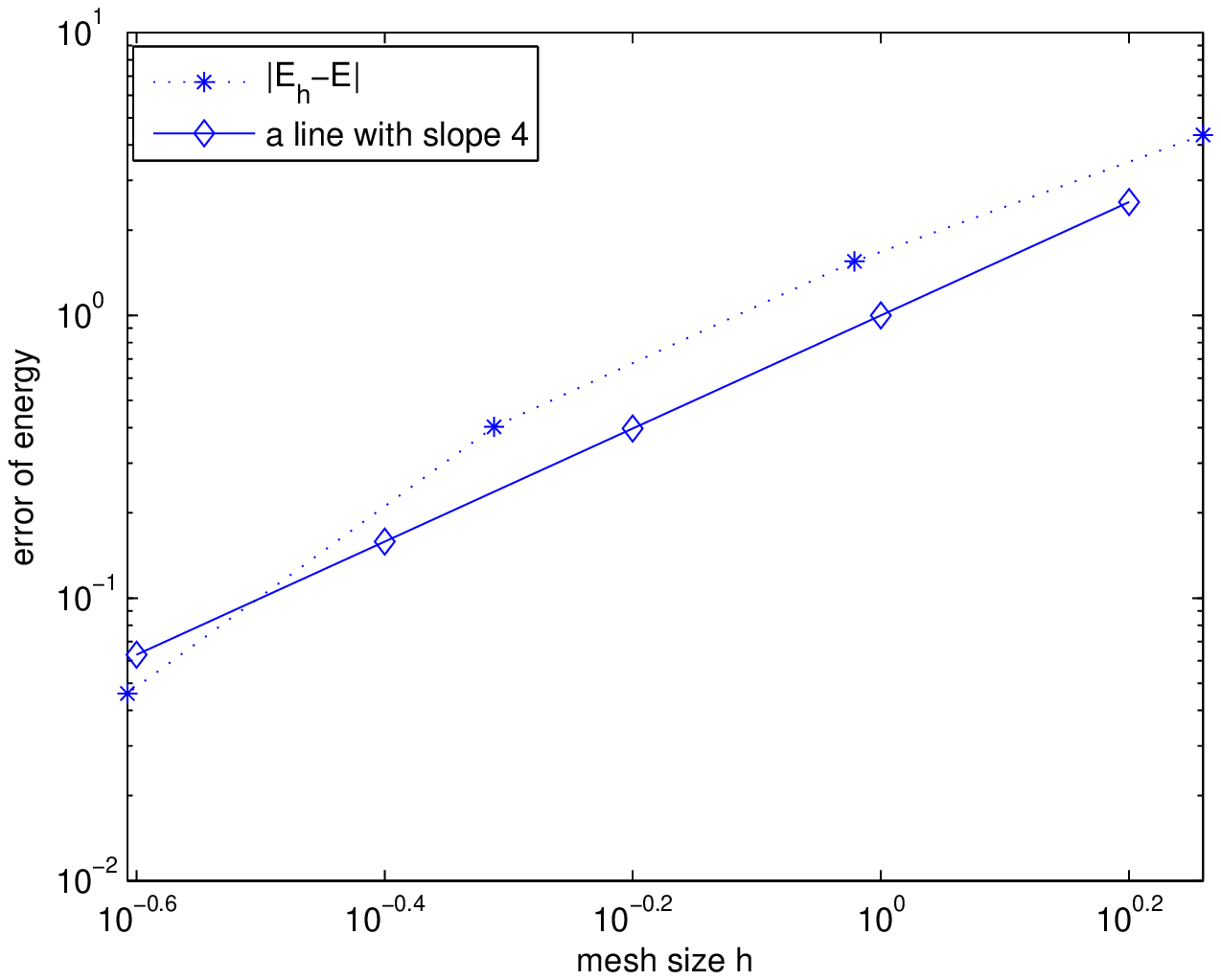}
  }
  \caption{$C_2H_4$: errors of the ground state total energy}
  \label{fig:c2h4-energy}
\end{figure}

\begin{figure}[htbp]
  \centering
  \subfloat[Linear finite elements]{
    \includegraphics[width=7.0cm]{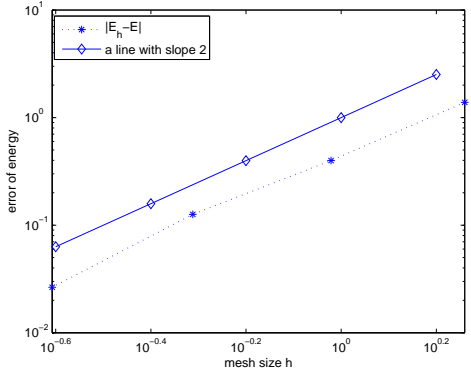}
  }
  \,
  \subfloat[Quadratic finite elements]{
    \includegraphics[width=7.0cm]{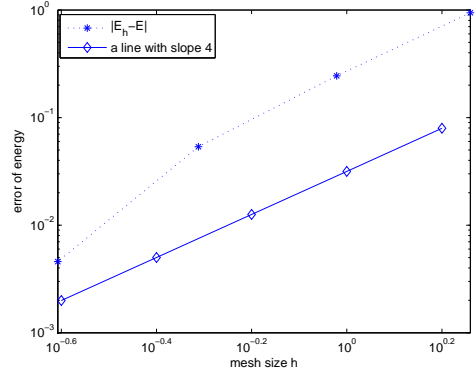}
  }
  \caption{$SiH_4$: errors of the ground state total energy}
  \label{fig:sih4-energy}
\end{figure}

\begin{figure}[ht]
  \centering
  \subfloat[Linear finite elements]{
    \includegraphics[width=7.0cm]{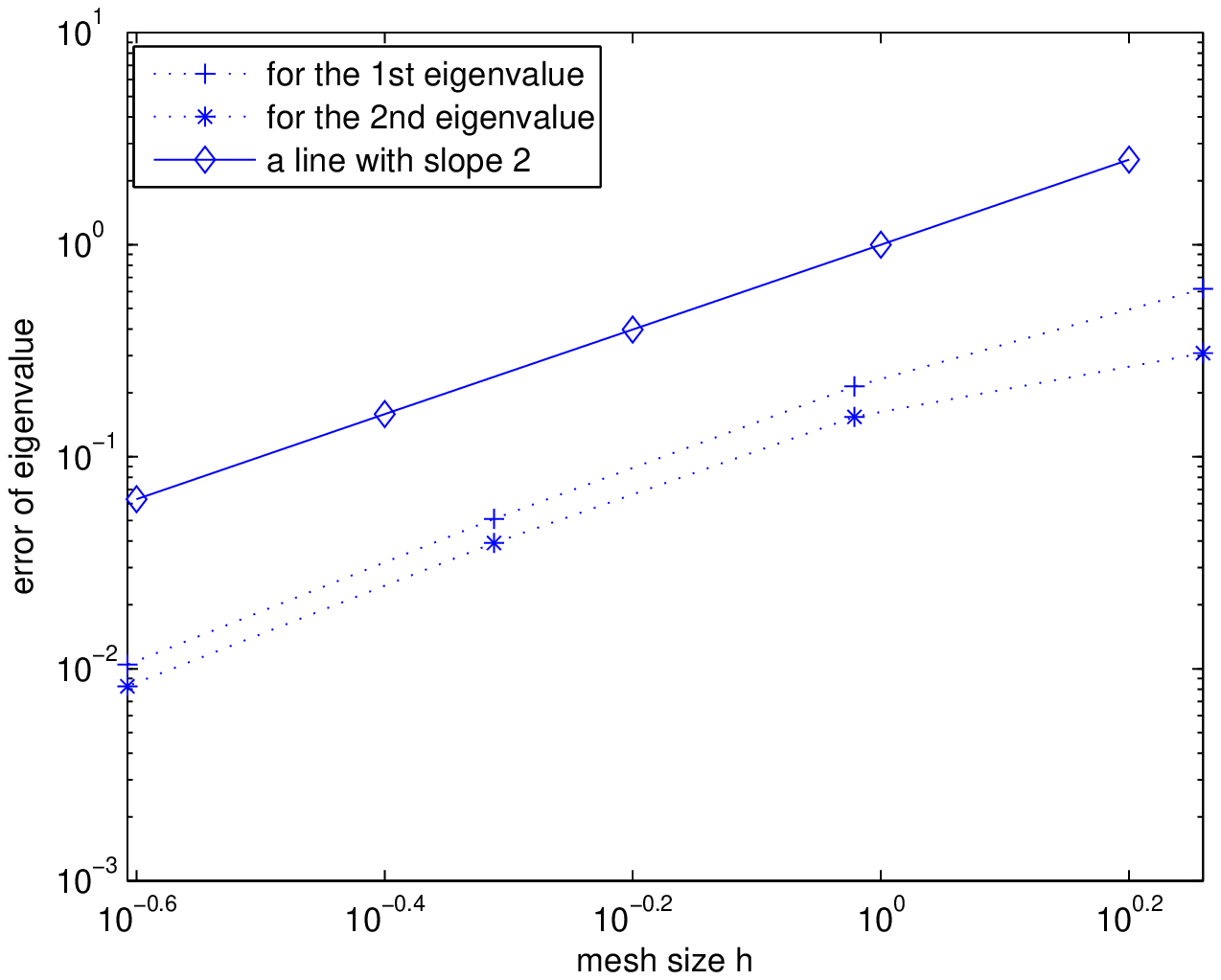}
  }
  \,
  \subfloat[Quadratic finite elements]{
    \includegraphics[width=7.0cm]{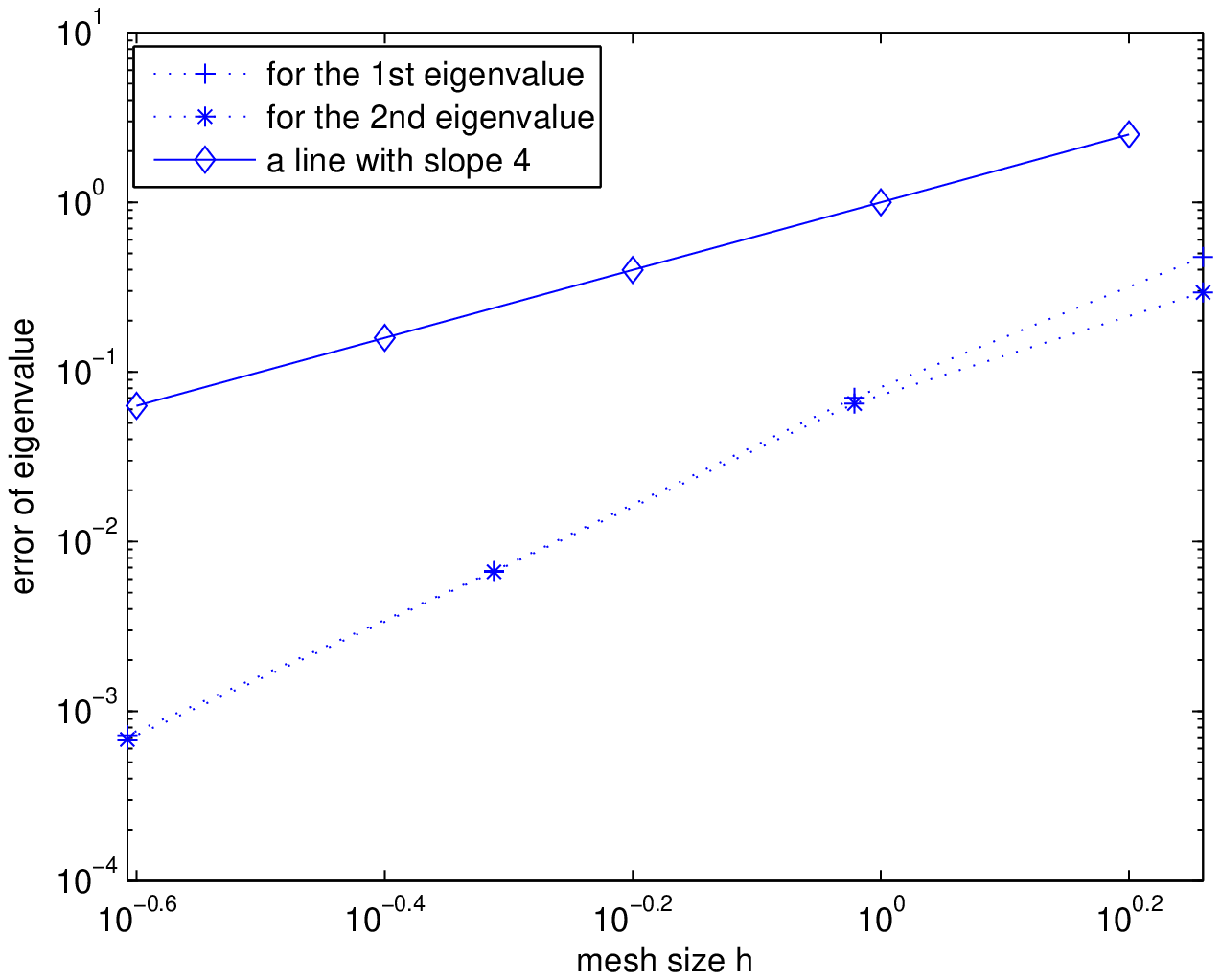}
  }
  \caption{$N_2$: errors of the first  and second eigenvalues}
  \label{fig:n2-eval1}
\end{figure}

\begin{figure}[htb]
  \centering
  \subfloat[Linear finite elements]{
    \includegraphics[width=7.0cm]{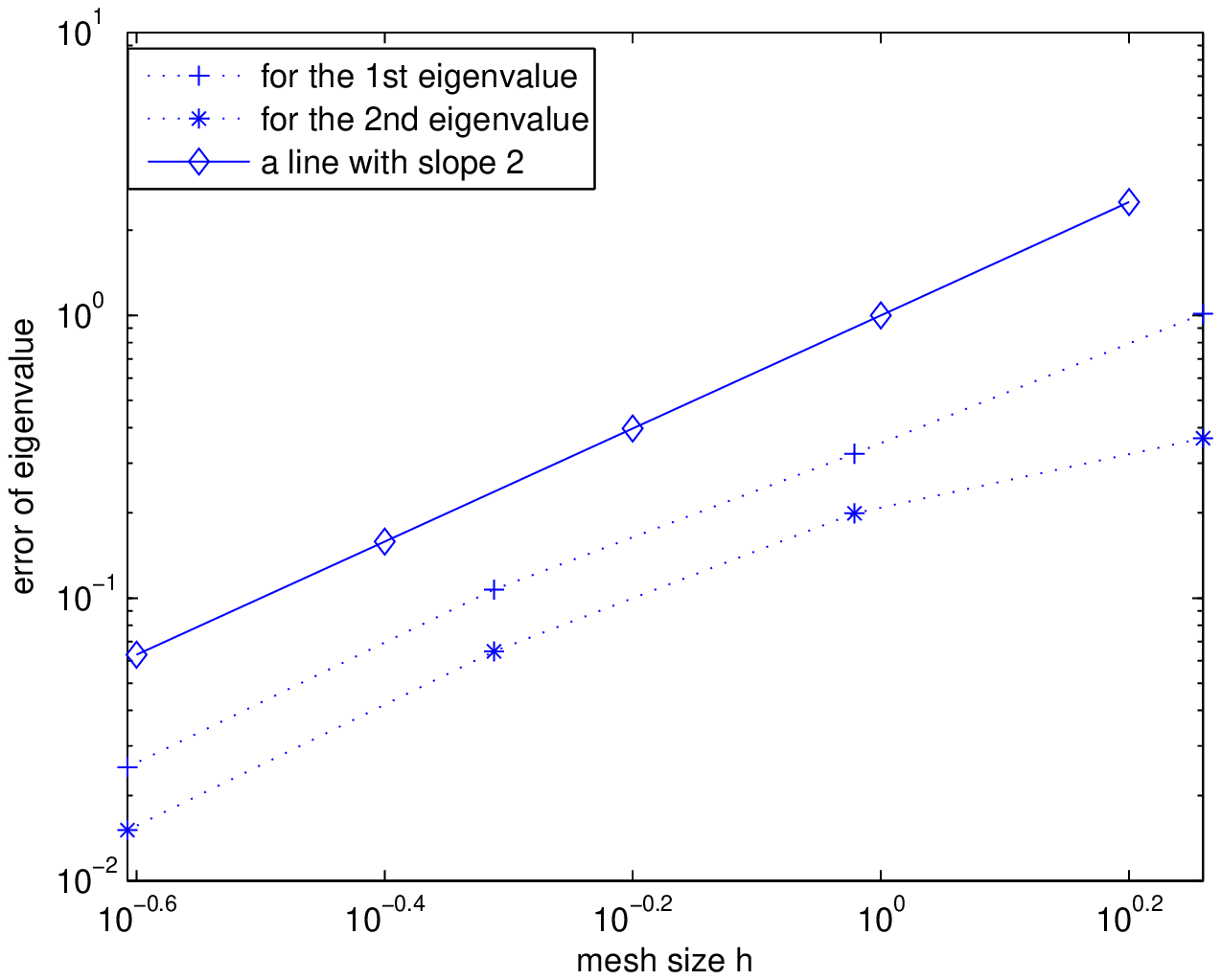}
  }
  \,
  \subfloat[Quadratic finite elements]{
    \includegraphics[width=7.0cm]{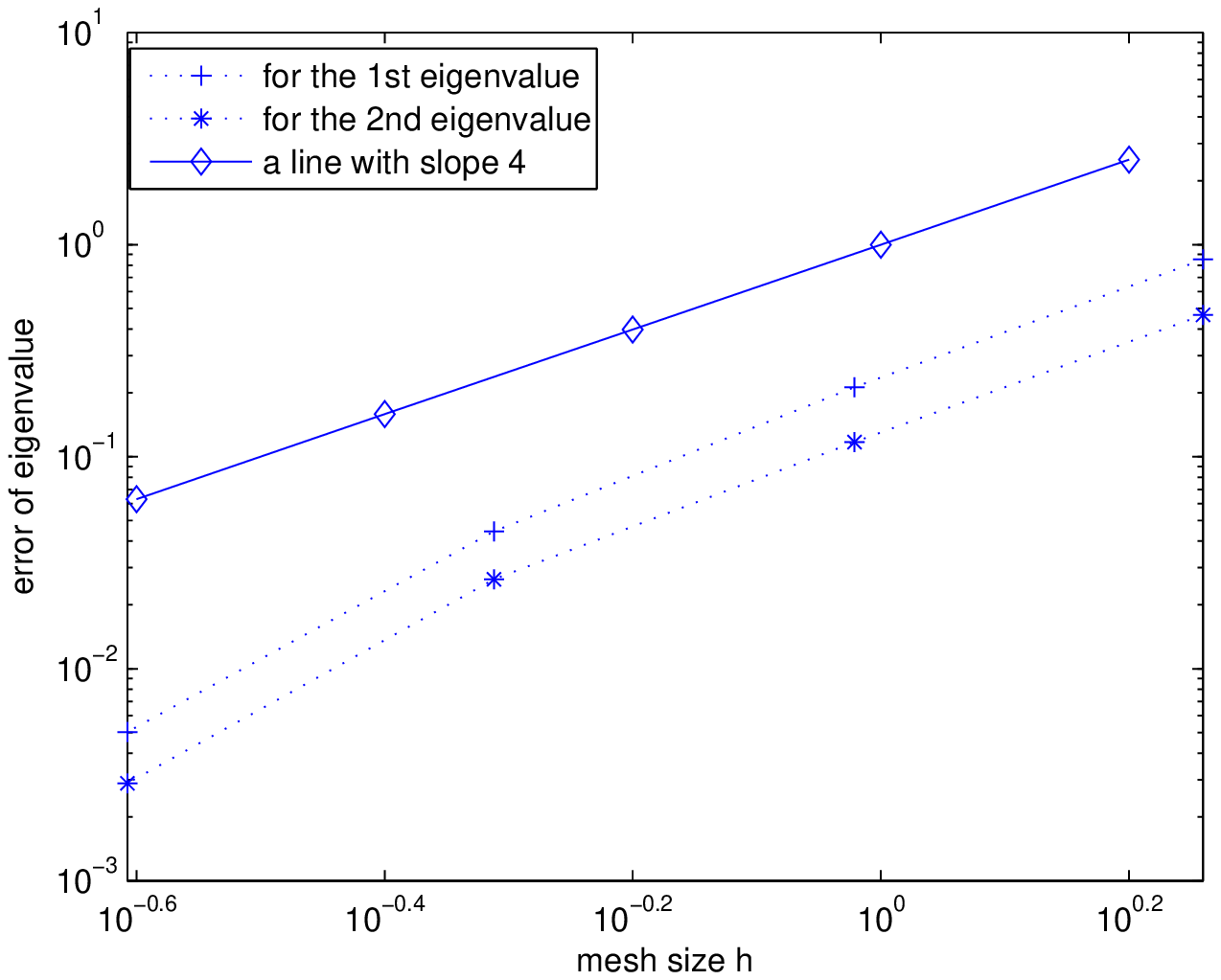}
  }
  \caption{$C_2H_4$: errors of the first and second eigenvalues}
  \label{fig:c2h4-eval1}
\end{figure}

\begin{figure}[htbp]
  \centering
  \subfloat[Linear finite elements]{
    \includegraphics[width=7.0cm]{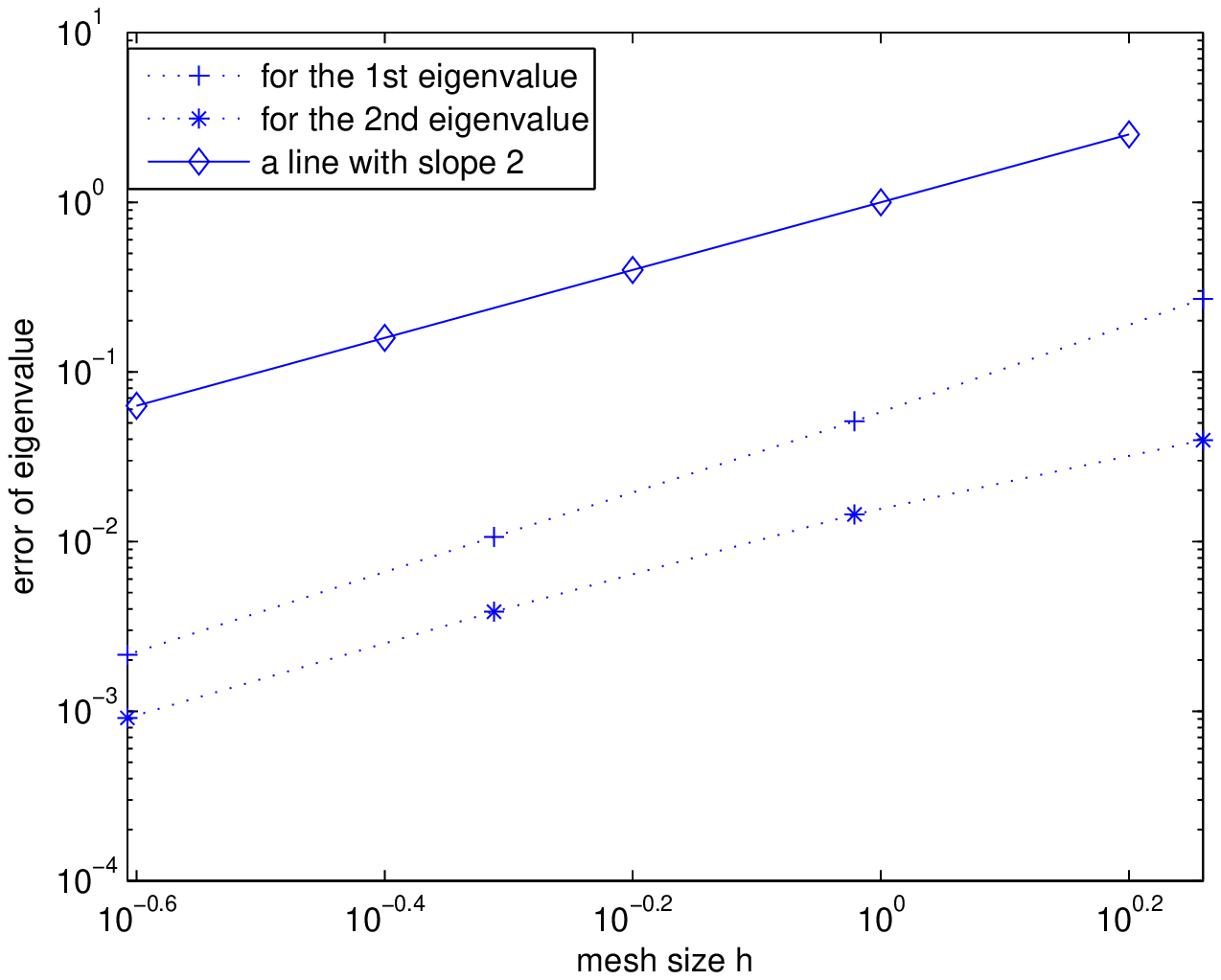}
  }
  \,
  \subfloat[Quadratic finite elements]{
    \includegraphics[width=7.0cm]{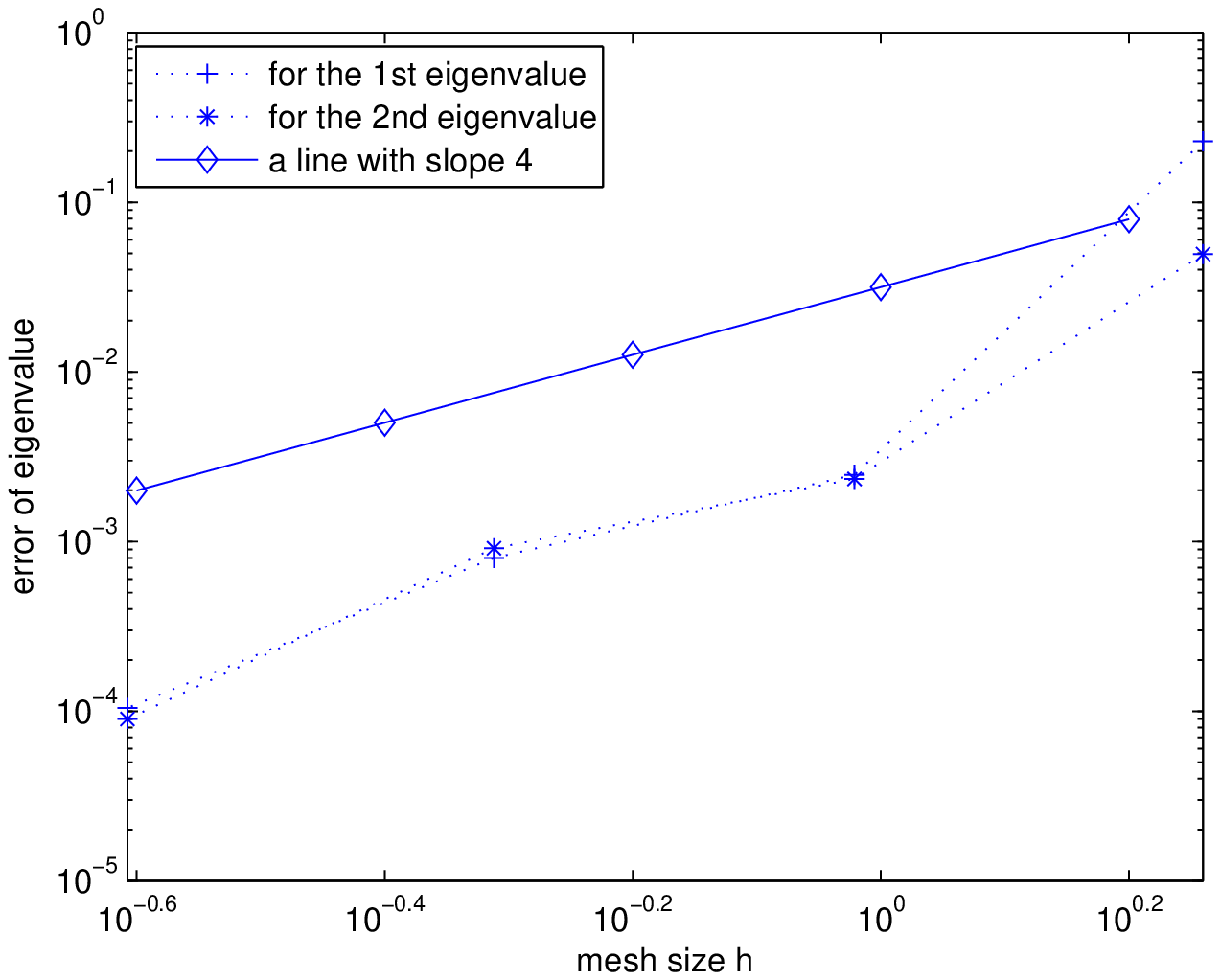}
  }
  \caption{$SiH_4$: errors of the first and second eigenvalues}
  \label{fig:sih4-eval1}
\end{figure}

\section{Concluding remarks}\label{sec-con} \setcounter{equation}{0}
We have analyzed finite dimensional approximations of Kohn-Sham
models. We have proved the convergence and shown the optimal a
priori error estimates of finite dimensional approximations.

As we see, the ground state solutions oscillate  near the nuclei \cite{gong-shen-zhang-zhou-08,kato57}. It is natural to apply adaptive finite element
discretizations to carry out the electronic structure calculations. Indeed, it is our on-going work to study the convergence and complexity of adaptive
finite element methods that will be addressed elsewhere.\vskip 0.2cm

{\sc Acknowledgements.} The authors would like to thank Dr. Xiaoying
Dai, Prof. Lihua Shen, and Dr. Dier Zhang for their stimulating
discussions and fruitful cooperations on electronic structure
computations that have motivated this work. The authors are grateful
to Prof. Linbo Zhang and Dr. Tao Cui for their assistance on
numerical computations and, to Mr. Zaikun Zhang for his discussions
on the local uniqueness of the discrete solution.

\end{document}